\title{
  On concentration in vortex sheets
}
\author{Samuel~Lanthaler}
\address[Samuel Lanthaler]{California Institute of Technology, Pasadena, CA 91125, USA}
\thanks{\textbf{Acknowledgements:} 
  The author would like to thank Prof. Siddhartha Mishra for valuable discussions. This research was partially supported by the European Research council (ERC) consolidator grant ERC COG 770880: COMANFLO}
\newcommand{\explain}[2]{\overset{\mathclap{\underset{\downarrow}{#2}}}{#1}}
\newcommand{\loc}{{\mathrm{loc}}}
\newcommand{\dirac}{\delta}
\newcommand{\slot}{{\,\cdot\,}}
\newcommand{\dom}{{\R^2}}
\newcommand{\supp}{\mathrm{supp}}
\newcommand{\weaklyto}{{\rightharpoonup}}
\newcommand{\weakstarto}{\overset{\ast}{\weaklyto}}
\newcommand{\R}{\mathbb{R}}
\renewcommand{\div}{{\mathrm{div}}}
\newcommand{\curl}{{\mathrm{curl}}}
\newcommand{\Lip}{\mathrm{Lip}}
\newcommand{\D}{\mathcal{D}}
\newcommand{\define}{\textbf}
\declaretheoremstyle[
  headfont=\normalfont\bfseries\itshape,
  numbered=unless unique,
  bodyfont=\normalfont,
  spaceabove=1em plus 0.75em minus 0.25em,
  spacebelow=1em plus 0.75em minus 0.25em,
  qed={},
]{deflt}
\theoremstyle{deflt}
\newtheorem{theorem}{Theorem}[section]
\newtheorem{remark}[theorem]{Remark}
\newtheorem{definition}[theorem]{Definition}
\newtheorem{lemma}[theorem]{Lemma}
\newtheorem{proposition}[theorem]{Proposition}
\newtheorem{corollary}[theorem]{Corollary}
\numberwithin{equation}{section}
\numberwithin{theorem}{section}
\begin{document}

\begin{abstract}
The question of energy concentration in approximate solution sequences $u^\epsilon$, as $\epsilon \to 0$, of the two-dimensional incompressible Euler equations with vortex-sheet initial data is revisited. Building on a novel identity for the structure function in terms of vorticity, the vorticity maximal function is proposed as a quantitative tool to detect concentration effects in approximate solution sequences. This tool is applied to numerical experiments based on the vortex-blob method, where vortex sheet initial data without distinguished sign are considered, as introduced in \emph{[R.~Krasny, J. Fluid Mech. \textbf{167}:65-93 (1986)]}. Numerical evidence suggests that no energy concentration appears in the limit of zero blob-regularization $\epsilon \to 0$, for the considered initial data.
\\[1em]
Mathematics Subject Classification numbers: 35Q35, 35Q31, 65M12, 65M70, 76B03
\end{abstract}

\maketitle

\section{Introduction} \label{sec:intro}

The present work considers the question of concentration in solutions of the two-dimensional incompressible Euler equations with vortex sheet initial data. As is well-known (see e.g. \cite{MajdaBertozzi} and references therein), the incompressible Euler equations can be formulated either in the primitive variable formulation, providing an evolution equation for the fluid velocity $u$, or in the vorticity formulation, which in the two-dimensional case describes the advection of the vorticity $\omega = \curl(u)$ by the velocity field ${u}$. More precisely, the primitive variable formulation of the two-dimensional incompressible Euler equations is given by the following evolution equation for $u: \,\R^2 \times [0,T] \to \R^2$, $(x,t) \mapsto u(x,t)$:
\begin{gather} \label{eq:Euler}
\left\{
\begin{aligned}
\partial_t u + \div(u\otimes u) + \nabla p 
&= 0, \\
\div(u) 
= 0, \quad
u(t=0) 
&= u_0.
\end{aligned}
\right.
\end{gather}
Here, $u_0$ is the initial data and $p$ is the (scalar) pressure which can be interpreted as a Lagrange multiplier enforcing the incompressibility constraint $\div(u) = 0$. Taking the curl of \eqref{eq:Euler}, the following vorticity equation is formally obtained for $\omega = \curl(u)$ \cite{MajdaBertozzi}
\begin{gather} \label{eq:vorticity}
\left\{
\begin{gathered}
\partial_t \omega + \div(\omega \, u) 
= 0, \\
\omega(t=0) 
= \omega_0.
\end{gathered}
\right.
\end{gather}

\subsection{Theoretical results}
While existence and uniqueness results are available for sufficiently smooth initial data, e.g. if $\omega_0 \in L^\infty\cap L^1$ \cite{Yudovich1995} (and closely related spaces \cite{Yudovich1963,Chemin1996,Vishik1999}) or $\omega_0 \in L^\infty$, $u_0\in L^\infty$ \cite{Serfati1995,AKLN2015}, for less regular initial data of physical interest such as vortex sheet initial data, many open questions remain. In particular, it is currently not known whether weak solutions exist for vorticity initial data $\omega_0 \in \mathcal{M} \cap H^{-1}$ a bounded Radon measure in the Sobolev space $H^{-1}$. Existence has been proved for $\omega_0 \in L^p$, $p>1$ in the pioneering work \cite{DipernaMajda1987a} and more generally for rearrangement invariant spaces with compact embedding in $H^{-1}_{\mathrm{loc}}$ \cite[Section 4.2]{Lions}, \cite{LNT2000}. For $\omega_0 \in \mathcal{M}$ a finite Radon measure, existence is so far only known under a sign restriction. Existence has in this case been obtained in the celebrated work of Delort \cite{Delort1991} (see also \cite{Majda1993,Schochet1995}). In the work \cite{Delort1991}, the incompressible Euler equations \eqref{eq:Euler} are interpreted in the weak formulation: $u \in L^\infty(0,T;L^2_x)$ is a weak solution of \eqref{eq:Euler} with initial data $u_0 \in L^2_x$, if 
\begin{gather}\label{eq:Eulerweak}
\int_0^T \int_\dom u \cdot \partial_t \phi + \nabla \phi : (u\otimes u) \, dx \, dt
= -\int_\dom u_0 \cdot \phi(t=0) \, dx,
\end{gather}
for all $\phi \in C^\infty_c([0,T)\times \dom;\R^2)$, such that $\div(\phi) = 0$. Delort then observed that given an approximate solution sequence $u_n \weaklyto u$ converging weakly to $u\in L^2_tL^2_x$, it is possible (by a \emph{compensated-compactness} argument) to pass to the limit in the non-linear term
\[
\int_0^T \int \nabla \phi: (u_n\otimes u_n) \, dx \, dt
\to
\int_0^T \int \nabla \phi: (u\otimes u) \, dx \, dt,
\]
provided that the vorticity is uniformly bounded $\Vert \omega_n(t) \Vert_{\mathcal{M}}, \Vert \omega_n(t) \Vert_{H^{-1}}\le C$ for all $t\in [0,T]$, $n\in \mathbb{N}$, and that the following non-concentration property is satisfied
\[
\lim_{r\to 0} \sup_{n} \sup_{t\in[0,T]} M_r(\omega_n(t)) = 0.
\]
Here $M_r(\omega(t))$ is the ``vorticity maximal function'' \cite{DipernaMajda1987a,DipernaMajda1988}, defined by
\begin{align} \label{eq:vortmax}
M_r(\omega) := \sup_{x \in \dom} \int_{B_r(x)} |\omega(y,t)| \, dy.
\end{align}
For essentially \emph{non-negative} vorticity $\omega_0 = \omega_0' + \omega_0''$ with $\omega_0' \in H^{-1}\cap \mathcal{M}_+$, $\omega_0' \ge 0$ a non-negative finite measure, and $\omega_0'' \in L^p\cap H^{-1}$, it is shown in \cite{Delort1991} (for $p>1$; see \cite{VecchiWu1993} for $p=1$), that the approximate solution sequence $\omega_n$ obtained by mollification of the initial data satisfies the non-concentration property. The existence of weak solutions with initial data in this ``Delort-class'' follows by Delort's compensated-compactness argument. In fact, it has been shown in \cite{Majda1993} that if $\omega_0 \in \mathcal{M}_+ \cap H^{-1}$ is a non-negative measure such that $|x|^2 \omega_0(x) \in \mathcal{M}_+$ is finite, then there exists a solution of the incompressible Euler equations with vorticity maximal function satisfying the following a priori bound: There exists a constant $C>0$, such that
\begin{align} \label{eq:apriorivortmax}
M_r(\omega(t)) \le C|\log(r)|^{-1/2}, \quad \text{for } r\le \frac 12,
\end{align}
for almost all $t\in [0,T]$. An extension of Delort's result \cite{Delort1991} to vortex sheet data with reflection symmetry and a sign restriction in a half-space has been achieved in \cite{LNZ2001}.

For vortex sheet initial data $\omega \in \mathcal{M} \cap H^{-1}$ without a sign-restriction, no general (unconditional) existence results are at present available. In particular, it is not known whether weakly convergent approximate solution sequences $u^\epsilon \weaklyto u$, $\epsilon > 0$, obtained either by solution of the incompressible Euler equations with regularized initial data or from regularized equations of motion may exhibit concentration phenomena in the limit of zero regularization $\epsilon \to 0$ \cite{DipernaMajda1987a,DipernaMajda1987b}. The study of possible concentration and oscillation phenomena in approximate solution sequences of the incompressible Euler equations was pioneered by Diperna and Majda in a series of papers \cite{DipernaMajda1987a,DipernaMajda1987b,DipernaMajda1988}. In \cite{DipernaMajda1987a,DipernaMajda1987b}, the concept of measure-valued solutions of the incompressible Euler equations is introduced, taking into account oscillation and concentration phenomena in the limit $\epsilon \to 0$. For vortex-sheet initial data with $\omega_0 \in \mathcal{M}$, it can be shown that suitable regularizations do not exhibit oscillations. Concentrations in the limit $\epsilon \to 0$ might occur, and have been investigated further in \cite{DipernaMajda1988}. In \cite{DipernaMajda1988} (see also \cite{GreengardThomann1988,Zheng1991}), it is shown that a form of compensated compactness (or ``concentration-cancellation'') would follow from bounds on the Hausdorff-dimension of the concentration set. Such a concentration-cancellation result has been established for the case of \emph{time-invariant} approximate solution sequences in \cite{DipernaMajda1988}. Whether concentration-cancellation occurs more generally for approximate solution sequences is still not known, however.

\subsection{Numerical approximation}
In the absence of general existence and uniqueness results for vortex sheet initial data, several previous investigations have therefore resorted to elucidate the complicated dynamics of vortex sheets by numerical methods.  
Popular numerical approximation schemes include grid-based (Eulerian) methods such as finite difference/finite volume \cite{LevyTadmor1997} and spectral methods \cite{LM2015,LM2019}, as well as Lagrangian methods, such as the vortex-point \cite{Rosenhead1931,krasny1986study} and vortex-blob methods \cite{Chorin1973}. Convergence results to solutions of the incompressible Euler equations have been obtained for rough initial data with $\omega_0$ belonging to a rearrangement invariant space with compact embedding in $H^{-1}_{\mathrm{loc}}$ \cite{LNT2000} and, more recently, for periodic initial data belonging to the Delort-class $\omega_0 \in H^{-1}\cap ( \mathcal{M}_+ + L^1)$ in \cite{LM2019}. For vortex-point and vortex-blob methods, convergence for initial data $\omega_0 \in H^{-1} \cap \mathcal{M}_+$ are also available \cite{LiuXin1995,Schochet1996,LiuXin2001}. In a series of numerical experiments presented by Krasny \cite{Krasny1986,Krasny1987,Krasny1990}, the evolution of vortex sheets by a regularised Birkhoff-Rott equation, using the vortex-blob method, have been illustrated. In these numerical experiments, a marked difference has been observed between the dynamics of vortex sheets with and without a sign-restriction on the vorticity \cite{Krasny1987}; the unsigned case apparently exhibiting considerably more complicated dynamics compared to the signed case. This observation has prompted the conjecture \cite[Remark 3.2]{DipernaMajda1987a}, \cite[p.447]{MajdaBertozzi} that the observed small-scale features in the unsigned case might hint at a convergence to a non-trivial measure-valued solution (with concentration in the limit) \cite{DipernaMajda1987a,DipernaMajda1987b}. 

In contrast, recent numerical experiments based on the spectral viscosity method in the context of statistical solutions to the Euler equations \cite{LMP2019,LMP2020} have found no apparent qualitative differences in the regularity of numerical approximations to vortex sheet initial data with and without a sign restriction (at least for ``typical'' initial data among randomly perturbed vortex sheets). In \cite{LMP2019,LMP2020}, the regularity of the numerically obtained approximate solutions was measured by the decay of the structure functions $S_2(u^\Delta;r)$ of the discretized velocity $u^\Delta$ obtained at grid scale $\Delta > 0$:
\begin{align} \label{eq:structfun}
S_2(u^\Delta(t);r) := \left(\fint_{B_r(0)}\int_{D} |u^\Delta(x+h,t) - u^\Delta(x,t)|^2 \, dx \, dh \right)^{1/2}.
\end{align}
One of the main findings of that work was an apparent uniform (for $t\in [0,T]$ and as $\Delta \to 0$) decay of these structure functions. More precisely, the numerical experiments indicate that there exist constants $C,\alpha > 0$, such that $\sup_{t\in [0,T]} S_2(u^\Delta(t);r) \le C r^\alpha$, for all resolutions $\Delta > 0$. As explained in \cite{LMP2019}, such a uniform decay implies strong $L^2$-compactness of the approximate solution sequence $u^\Delta$, and energy-conservation of any solution $u$ obtained in the limit $\Delta \to 0$. 

\subsection{Contributions and scope of the present work} 

The complicated dynamics of unsigned vortex sheets show a creation of intricate small-scale structures which results from the interaction of parts with positive and negative vorticity, and which appears to be absent in cases with a distinguished sign. This might indicate that the evolution of vortex sheets without a distinguished sign exhibit much less regularity than the evolution of vortex sheets with vorticity of distinguished sign $\omega_0 \ge 0$ \cite[Remark 3.2]{DipernaMajda1987a}. However, beyond the visually very compelling qualitative differences in the evolution of vortex sheets with and without a distinguished sign uncovered in \cite{Krasny1987}, a detailed \emph{quantitative analysis} of these results appears not to have been carried out so far. Recent numerical experiment based on the spectral viscosity method \cite{LMP2019,LMP2020} have found no essential differences, when comparing the regularity at later times in the evolution of approximate solutions with signed and unsigned vortex sheet initial data. This apparent discord between the expected deterioration of regularity from results based on the vortex-blob method and the observed persistence of regularity in numerical experiments conducted with spectral methods provides the main motivation for the present work. The numerical experiments considered by Krasny \cite{Krasny1987} are revisited, and the regularity of the resulting approximate solutions is analysed \emph{quantitatively}. The main contributions are as follows:
\begin{itemize}
\item  A novel identity for the structure function \eqref{eq:structfun} is derived in terms of the vorticity. From this identity, an explicit estimate on the decay of the structure functions is obtained in terms of the vorticity maximal function. In particular, it is shown that an algebraic decay of the vorticity maximal function $M_r(\omega)$ implies an algebraic decay of the structure function $S_2(u;r)$.
\item As an immediate corollary of the novel identity, we recover the logarithmic circulation theorem of Diperna and Majda \cite[Theorem 3.1]{DipernaMajda1987b}, under slightly relaxed assumptions.
\item This relation between the vorticity and the associated flow represents the main tool to analyse our numerical experiments. We prove strong convergence, to a energy-conservative solution of the incompressible Euler equations, of approximate solution sequences obtained from the vortex-blob method, under the assumption of an algebraic decay of the vorticity maximal function.
\item Based on these theoretical considerations, numerical experiments  employing the vortex-blob method are presented and analysed. It is shown that despite the -- visually -- intricate and complex evolution of vortex sheets without a distinguished sign, the quantitative analysis based on the vorticity maximal function shows \textbf{no indication of concentration} in the evolution of vortex sheets, even as these complex vortex sheet dynamics take place.
\end{itemize}

\subsection{Organisation}
In Section \ref{sec:structfun}, we derive a novel exact expression for the structure function $S_2(u;r)$ in terms of the vorticity, and establish the link to the vorticity maximal function. In Section \ref{sec:vortexblob}, we review the formulation of the vortex-blob method, and based on the results of Section \ref{sec:structfun}, we give a sufficient condition for the strong convergence of approximate solutions obtained from the vortex-blob method to a solution of the incompressible Euler equations. In Section \ref{sec:numerical}, we revisit the numerical experiments conducted in \cite{Krasny1987}.  Conclusions and perspectives for future work are given in Section \ref{sec:conclusion}.

\subsection{Notation} We will denote spaces such as $L^2(\dom;\R^2)$ consisting of two-dimensional vector fields with spatial dependency $x\mapsto u(x)$ by $L^2_x$. Similar notation will also be used for functions, i.e. $L^1_x$ may refer to $L^1(\dom)$ (with target space $\R$), if the target space is clear from the context. Similarly, for fixed $T>0$, Bochner spaces of time-dependent vector fields, such as $L^p(0,T;L^2_x)$, consisting of measurable $(x,t) \mapsto u(x,t)$ with $(x,t) \in \dom\times [0,T]$, such that
\[
\Vert u \Vert_{L^p_tL^2_x} = \left(\int_0^T \Vert u \Vert_{L^2_x}^p \, dt \right)^{1/p} < \infty,
\]
will be denoted more simply by $L^p_tL^2_x$. The symbol $\mathcal{M}$ refers to the space of bounded Radon measures on $\dom$, throughout this work. Other (mostly standard) notation is introduced as needed, and should be clear from the context. We will follow the convention that in the derivation of estimates, a single letter $C$ will be used throughout, even if the value of the constant differs from line to line. The final dependency of the constant on other given quantities will be made precise in each case.

\section{Structure function decay and vorticity correlations} \label{sec:structfun}

We first recall that approximate solutions obtained by mollifying the initial data, by solving regularized equations such as the Navier-Stokes equations or obtained from most numerical schemes produce approximate solutions sequences in the limit of vanishing regularisation according to the following definition, similar to \cite{DipernaMajda1987a}:
\begin{definition} \label{def:approxsol}
A sequence of vector fields $u^\epsilon \in C([0,T];L^2_x)$, $\epsilon \to 0$, with vorticity $\omega^\epsilon(t) = \curl(u^\epsilon(t))\in \mathcal{M}$  is a \define{approximate solution sequence} of the incompressible Euler equations with initial data $u_0\in L^2_x$ and $\omega_0 = \curl(u_0) \in \mathcal{M}$  on the time interval $[0,T]$, if the following four conditions are satisfied:
\begin{enumerate}
\item $\sup_{t\in [0,T]} \Vert u^\epsilon(t) \Vert_{L^2} \le C$, uniformly for all $\epsilon>0$,
\item $\sup_{t\in [0,T]} \Vert \omega^\epsilon(t) \Vert_{\mathcal{M}} \le C$, uniformly for all $\epsilon>0$,
\item weak consistency with the Euler equations \eqref{eq:Eulerweak}: For some $L>0$, $u^\epsilon \in \mathrm{Lip}([0,T];H^{-L})$ is uniformly Lipschitz continuous with values in the negative Sobolev space $H^{-L}$ and
\[
\lim_{\epsilon \to 0}
\int_0^T \int_\dom ( \partial_t \phi) \cdot u^\epsilon + \nabla \phi: (u^\epsilon\otimes u^\epsilon) \, dx \, dt = 0,
\]
for all $\phi \in C_c^\infty(\dom \times (0,T))$ with $\div(\phi) = 0$.
\item convergence to initial data: We have $u^\epsilon(t=0) \to u_0$ strongly in $L^2$ and $\omega^\epsilon(t=0) \weakstarto \omega_0$ weak-$\ast$ with respect to the natural pairing of $\mathcal{M}$ with $C_0(\dom)$.
\end{enumerate}
\end{definition}

\begin{remark}
The vortex sheets considered in the numerical experiments conducted in this work have vanishing mean vorticity, $\int_\dom \omega^\epsilon \, dx = 0$ and uniformly bounded velocity $u$ (in the $L^2$-norm). This motivates assumption (1) in our formulation of Definition \ref{def:approxsol}. If cases with $\int_\dom \omega^\epsilon \, dx \ne 0$ were to be considered, a more careful discussion of suitable decay conditions for the velocity at $x \to \infty$ would be required (see \cite[Remark 1.1]{DipernaMajda1987a}). 
\end{remark}

The structure functions $S_2(u^\epsilon;r)$ in the form \eqref{eq:structfun} have originally been introduced as a measure of spatial correlations in approximate solution sequences and statistical correlations in \cite{FLM2017}. In different variations, such quantities have also extensively been discussed in the turbulence literature. In the deterministic case, these structure functions can be interpreted as a precise measure of the strong $L^2$-compactness of approximate solution sequences. This has been shown for periodic flows in \cite{LMP2020}. In the present non-periodic setting, we state the following proposition, which is a variant of \cite[Proposition 2.7]{LMP2020}:
\begin{proposition} \label{prop:compactness}
Let $\{u^\epsilon\}_{\epsilon > 0}$, be an approximate solution sequence of the incompressible Euler equations. If there exists a uniform modulus of continuity $\phi(r)$, such that 
\[
\sup_{t\in [0,T]} S_2(u^\epsilon(t); r) \le \phi(r), \quad \forall r > 0,
\]
uniformly in $\epsilon >0$, then $\{u^\epsilon\}_{\epsilon > 0}$ is strongly precompact in $L^2_x([0,T];L^2_{x,\mathrm{loc}})$.
\end{proposition}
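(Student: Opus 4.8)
The plan is to convert the (averaged) spatial modulus of continuity encoded by the structure function into a quantitative approximation of each $u^\epsilon$ by smooth fields via mollification in space, and then to treat the time direction with the Aubin--Lions--Simon lemma, exploiting the weak temporal regularity in assumption (3). Since $L^2([0,T];L^2_{x,\loc})$ carries the Fr\'echet topology generated by the seminorms $u \mapsto \Vert u \Vert_{L^2([0,T];L^2(B_R))}$, $R>0$, it is enough to show that $\{u^\epsilon\}_{\epsilon>0}$ is strongly precompact in $L^2([0,T];L^2(B_R))$ for each fixed $R>0$; the local statement then follows by a diagonal argument. Throughout, let $\rho \in C_c^\infty(B_1)$ be a standard mollifier with $\rho \ge 0$, $\int \rho = 1$, and set $\rho_r(x) = r^{-2}\rho(x/r)$, so that $\supp \rho_r \subset B_r$ and $0 \le \rho_r \le C|B_r|^{-1}$. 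Recall that, as $\phi$ is a modulus of continuity, $\phi(r) \to 0$ as $r \to 0$.

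The first step is to show that the structure function controls the mollification error uniformly in $\epsilon$. Writing $u^\epsilon(\slot,t) - u^\epsilon(\slot,t)*\rho_r = \int \rho_r(h)\,(u^\epsilon(\slot,t) - u^\epsilon(\slot - h,t))\,dh$ and applying Jensen's inequality with respect to the probability measure $\rho_r(h)\,dh$ gives
\[
\Vert u^\epsilon(t) - u^\epsilon(t)*\rho_r \Vert_{L^2_x}^2
\le \int \rho_r(h)\,\Vert u^\epsilon(\slot,t) - u^\epsilon(\slot - h,t)\Vert_{L^2_x}^2 \, dh
\le C\, S_2(u^\epsilon(t);r)^2,
\]
where the final inequality uses $\rho_r \le C|B_r|^{-1}$, $\supp\rho_r \subset B_r$, and the symmetry $h \mapsto -h$ to match the definition \eqref{eq:structfun}. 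Integrating in $t$ and invoking the hypothesis yields the crucial \emph{uniform} bound $\Vert u^\epsilon - u^\epsilon*\rho_r \Vert_{L^2([0,T];L^2_x)} \le C\sqrt{T}\,\phi(r)$, which tends to $0$ as $r \to 0$ independently of $\epsilon$.

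Next, fixing $r>0$, the goal is to show that the mollified family $\{u^\epsilon*\rho_r\}_{\epsilon>0}$ is strongly precompact in $L^2([0,T];L^2(B_R))$. Young's inequality together with assumption (1) gives $\Vert u^\epsilon(t)*\rho_r \Vert_{H^1_x} \le (\Vert \nabla \rho_r \Vert_{L^1} + 1)\Vert u^\epsilon(t) \Vert_{L^2_x} \le C_r$, uniformly in $\epsilon,t$, while $\partial_t(u^\epsilon*\rho_r) = (\partial_t u^\epsilon)*\rho_r$ is bounded in $L^\infty([0,T];L^2_x)$ uniformly in $\epsilon$: indeed assumption (3) bounds $\partial_t u^\epsilon$ in $L^\infty([0,T];H^{-L})$, and convolution with $\rho_r$ maps $H^{-L} \to L^2$ with norm $\lesssim r^{-L}$ by a direct Fourier-side estimate using the rapid decay of $\widehat{\rho}$. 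Restricting to $B_R$ and using the compact Rellich embedding $H^1(B_R) \embedsc L^2(B_R)$, the Aubin--Lions--Simon lemma shows that $\{u^\epsilon*\rho_r\}_{\epsilon>0}$ is precompact in $C([0,T];L^2(B_R))$, hence in $L^2([0,T];L^2(B_R))$. To conclude, given $\delta>0$ choose $r$ so small that $\sup_\epsilon \Vert u^\epsilon - u^\epsilon*\rho_r \Vert_{L^2([0,T];L^2(B_R))} < \delta/2$; a finite $\delta/2$-net for the precompact family $\{u^\epsilon*\rho_r\}_\epsilon$ is then a $\delta$-net for $\{u^\epsilon\}_\epsilon$, so the latter is totally bounded and therefore precompact in $L^2([0,T];L^2(B_R))$.

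I expect the crux to lie in the interplay exploited in the second step rather than in any single estimate. The only temporal control available is the \emph{weak} Lipschitz bound in $H^{-L}$, whereas the spatial control from the structure function is merely an \emph{averaged} modulus of continuity over $h \in B_r$, not a genuine higher-order bound; neither alone yields space-time compactness. The mollification $\rho_r$ is exactly the device that reconciles the two: it upgrades the averaged spatial bound to an honest $L^2$-approximation (first step) while simultaneously trading the weak $H^{-L}$ time regularity for strong $L^2$ time regularity of the smoothed fields (second step), at the cost of $r$-dependent constants that are harmless because $r$ is fixed before $\epsilon$ is allowed to vary. The one point demanding care is that every constant in the first two steps be independent of $\epsilon$ — depending only on $r$, $T$, $R$, the dimension, and the uniform constants of Definition \ref{def:approxsol} — since it is precisely this uniformity that the final total-boundedness argument requires.
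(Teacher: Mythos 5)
Your argument is correct, and all the constants are tracked with the required uniformity in $\epsilon$: Jensen's inequality does convert the averaged structure-function bound into a mollification error $\Vert u^\epsilon(t)-u^\epsilon(t)\ast\rho_r\Vert_{L^2_x}\le C\phi(r)$ uniform in $\epsilon$ and $t$; the Fourier-side estimate $\Vert v\ast\rho_r\Vert_{L^2}\lesssim r^{-L}\Vert v\Vert_{H^{-L}}$ legitimately upgrades the uniform Lipschitz bound of Definition \ref{def:approxsol}(3) to an $L^\infty_tL^2_x$ bound on $\partial_t(u^\epsilon\ast\rho_r)$; and the order of quantifiers ($r$ fixed before $\epsilon$ varies, then the $\delta/2$-net argument, then a diagonal extraction over $R$) is handled correctly. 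The route differs from the paper's in its technical implementation: the paper reduces the claim to the Bochner-space version of the Kolmogorov--Riesz criterion (Simon's theorem), verifying spatial translation-compactness directly from the structure-function hypothesis and then establishing temporal equicontinuity of $t\mapsto u^\epsilon(t)$ in $L^2_{x,\mathrm{loc}}$ by interpolating the $\mathrm{Lip}([0,T];H^{-L})$ bound against the spatial bound. You instead approximate the family by the mollified fields and invoke Aubin--Lions--Simon for those, concluding by total boundedness. The two proofs exploit exactly the same interplay (weak-in-space/good-in-time versus good-in-space/weak-in-time), and indeed your mollification trick is essentially how the temporal equicontinuity in the paper's route would be proved; what your version buys is a fully self-contained argument resting only on the classical Aubin--Lions--Simon lemma rather than on the translation-based compactness criterion, at the mild cost of introducing the auxiliary compact families $\{u^\epsilon\ast\rho_r\}_\epsilon$ and the extra $\delta$-net step.
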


\begin{remark}
The restriction to local compactness in $L^2_{x,\loc}$, rather than global compactness in $L^2_x$, is due to the fact that the present assumptions cannot rule out that ``mass leaks out at infinity''. A simple example is provided by fixing any non-trivial $w\in L^2_x$, and defining $u^\epsilon(x) := w(x + x_0/\epsilon)$ for some $x_0\ne 0$. This sequence converges locally $u^\epsilon \overset{\loc}{\to} 0$, but doesn't converge globally, $\Vert u^\epsilon \Vert_{L^2_x} \equiv \Vert w \Vert_{L^2_x} \not\to 0$.
\end{remark}

\begin{proof}[Sketch of proof]
The proof is an almost verbatim repetition of the proof of the ``only if'' direction in \cite[Proposition 2.7]{LMP2020}, and the argument is not repeated here in detail: The statement is essentially an application of Kolmogorov's characterization of compact subsets of $L^p$ spaces, or more precisely the Bochner space analogue of this characterization presented in \cite[Section 3, Theorem 1]{Simon1986}. The only non-trivial ingredient is to establish uniform continuity in time, as is required for application of this theorem. To prove uniform continuity in time, one observes that $u^\epsilon$ is an approximate solution sequence, and therefore possess some minimal uniform continuity $u^\epsilon \in \mathrm{Lip}([0,T];H^{-L})$ in time, albeit with very weak spatial regularity. It turns out that the present assumption of a uniform bound on the structure functions $S_2(u^\epsilon(t);r)$ (providing spatial regularity), can be leveraged to show that $t\mapsto u^\epsilon(t)$ possesses temporal uniform continuity in $L^2_t(0,T;L^2_{x,\loc})$.
\end{proof}

\begin{remark}
 As described in the sketch of the proof above, Proposition \ref{prop:compactness} essentially states that any sequence which has ``good temporal continuity with very weak spatial regularity'' (e.g. uniform bound in $\Lip_tH^{-L}_x$) and,  at the same time, ``very weak temporal continuity with good spatial regularity'' (e.g. uniformly bounded structure functions, analogous to uniform bound in $L^2_t H^\alpha_x$, $\alpha > 0$)  must necessarily also have ``some uniform temporal continuity with some uniform spatial regularity'' (compactness in $L^2_t L^2_x$).
\end{remark}

\begin{remark} \label{rem:concentration}
We recall that for any approximate solution sequence $u^\epsilon \in L^2$, $\epsilon \to 0$, there exists a (subsequential) weak limit $u^\epsilon \weaklyto u$ in $L^2_t L^2_{x,\mathrm{loc}}$. As explained in \cite{DipernaMajda1987a,DipernaMajda1987b}, the lack of strong convergence is captured by the concentration measure: Under the assumption of uniform control $\Vert \omega^\epsilon \Vert_{\mathcal{M}}$, it can be shown that there exists a time-parametrized, weak-$\ast$ continuous measure $t\mapsto \lambda_t \in \mathcal{M}$, such that $\lambda_t(dx) \perp dx$ is supported on a set of Lebesgue measure zero, for all $t\in [0,T]$, and (again, up to the extraction of a subsequence)
\[
\int_0^T \int_\dom \phi |u^\epsilon|^2 \, dx \, dt
\to 
\int_0^T \int_\dom \phi |u|^2 \, dx \, dt
+ 
\int_0^T \int_\dom \phi \, d\lambda_t(x) \, dt,
\]
for any $\phi \in C_c(\dom\times [0,T])$. The concentration measure $\lambda_t$ thus characterizes energy concentration on sets of measure zero and the lack of strong convergence in approximate solution sequences. In particular, a uniform bound on the structure functions as in the assumptions of Proposition \ref{prop:compactness} implies a lack of energy concentration.
\end{remark}

\begin{remark} \label{rem:kernel}
\emph{(cp. Chapter 3 of \cite{MajdaBertozzi})} If the vorticity $\omega \in L^1_x\cap L^\infty_x$, then a velocity field $u \in L^2_{x,\loc}$ such that $\div(u) = 0$, $\curl(u) = \omega$ is given by
\begin{align} \label{eq:kernelrep}
u(x) = \int_\dom K(x-y) \omega(y) \, dy,
\end{align}
where 
\begin{align} \label{eq:kernel}
K(x) := \frac{1}{2\pi} \frac{x^\perp}{|x|^2}, \quad x^\perp = (-x_2,x_1).
\end{align}
If the vorticity is compactly supported, $\supp(\omega) \subset B_R(0)$, then we can write \cite[eq.(3.15)]{MajdaBertozzi}
\begin{align} \label{eq:asymptotic}
u(x) = K(x) \int_\dom \omega(y) \, dy + O(|x|^{-2}), \quad \forall \;|x|\ge 2R,
\end{align}
where the implied constant depends only on the $L^1$-norm of $\omega$ and on $R$. Equations \eqref{eq:kernelrep} and \eqref{eq:asymptotic} can be used to show that if $\int_\dom \omega(y) \, dy = 0$, then there exists a constant $C = C(\Vert \omega \Vert_{L^1},\Vert \omega \Vert_{L^\infty},R)$, such that 
\[
| u(x) | \le C(1+|x|^2)^{-1}.
\]
In particular, it follows that if $\omega \in L^1_x\cap L^\infty_x$ has compact support and $\int_\dom \omega \, dy = 0$, then $u \in L^2_x$. Using Fourier transforms, one can also show that there exists at most one solution $u\in L^2_x$, of the div-curl-system $\div(u) = 0$, $\curl(u) = \omega$, so that the representation \eqref{eq:kernelrep} is unique in this case.
\end{remark}

Having explained our motivation for the study of these structure functions, and their relevance for energy concentration and the convergence of approximate solution sequences, we now come to the following fundamental result for the present section. We state a novel identity, expressing the structure function $S_2(u;r)$ (equation \eqref{eq:structfun}) in terms of the vorticity.
\begin{lemma} \label{lem:structest}
Let $\omega \in L^1_x \cap L^\infty_x$, with $\supp(\omega)\subset \dom$ compact. Let $u = K\ast \omega$, so that $\div(u) = 0$, $\curl(u)= \omega$. Then 
\begin{gather} \label{eq:structvort} 
\begin{aligned}
\fint_{B_r(0)} &\int_{\dom} |u(x+h)-u(x)|^2\, dx \, dh
 &=
\int_{\dom}\int_{|h|\le r} 
\Sigma\left(\frac{|h|}{r}\right)  \omega(x)\omega(x+h) \, dh\, dx ,
\end{aligned}
\end{gather}
where, for $0<\rho\le 1$, we define $\Sigma(\rho):= (4\pi)^{-1}\left(|\log(\rho^2)| - 1 + \rho^2\right) \ge 0$. 
In particular, it follows that
\begin{gather*} 
\fint_{B_r(0)} \int_\dom |u(x+h)-u(x)|^2\, dx \, dh
\le 
\int_{\dom} \int_{B_r(0)} 
\left|\log\left(\frac{|h|}{r}\right)\right| |\omega(x)| |\omega(x+h)| 
 \, dh \, dx.
\end{gather*}
\end{lemma}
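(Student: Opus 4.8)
The plan is to compute the left-hand side directly via the kernel representation $u = K \ast \omega$ and reduce everything to a single scalar integral that can be evaluated explicitly. First I would write
\[
u(x+h) - u(x) = \int_{\dom} \bigl( K(x+h-y) - K(x-y) \bigr) \omega(y) \, dy,
\]
so that the squared norm becomes a double integral over two vorticity variables $y, y'$ paired against the kernel difference. Expanding $|u(x+h)-u(x)|^2$ and integrating $dx$ first, I would isolate the scalar quantity
\[
\int_{\dom} \bigl( K(x+h-y) - K(x-y) \bigr) \cdot \bigl( K(x+h-y') - K(x-y') \bigr) \, dx,
\]
which, after the change of variables $x \mapsto x - y'$ (or equivalently by translation invariance of the kernel), should depend only on $h$ and on the separation $z := y - y'$. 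This is the main structural simplification: the eightfold set of variables collapses, and the whole computation hinges on understanding one kernel-correlation function of $h$ and $z$.

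**Evaluating the kernel correlation.**

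The hard part will be computing this inner $dx$-integral and showing it produces exactly the logarithmic profile $\Sigma$. Since $K(x) = (2\pi)^{-1} x^\perp/|x|^2$ is (up to rotation by $\pi/2$) the gradient of the fundamental solution $G(x) = -(2\pi)^{-1}\log|x|$ of $-\Delta$, I would express $K \cdot K$ in terms of $\nabla G \cdot \nabla G = |\nabla G|^2$ and use integration by parts to convert the $dx$-integral of products of kernel differences into evaluations of $G$ itself. Concretely, $\int_{\dom} \nabla G(x-a) \cdot \nabla G(x-b)\, dx$ is (formally) $\int_{\dom} G(x-a)(-\Delta G)(x-b)\,dx = G(a-b)$ after using $-\Delta G = \delta$; care is needed because $G$ is not integrable at infinity, but the four-term combination arising from the two kernel differences is short-range and the logarithmic divergences cancel pairwise. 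Carrying this through, the inner integral should evaluate to a combination of the form $2G(z) - G(z-h) - G(z+h)$ up to constants, i.e. a second difference of $-\log|z|$ in the $h$-direction. Setting $z = y - y'$ and relabeling $y = x$, $y' = x + h$ (consistent with the right-hand side of \eqref{eq:structvort}), this should match $(4\pi)^{-1}(|\log(|h|^2/r^2)| - 1 + |h|^2/r^2)$ after the $dh$-averaging over $B_r$ is incorporated; the factor $\Sigma(|h|/r)$ with its $-1 + \rho^2$ correction terms is precisely what the boundary contributions from the finite ball $B_r$ in the $\fint_{B_r}\,dh$ average produce.

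**Assembling the identity and the corollary.**

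Once the kernel correlation is identified as $\Sigma(|h|/r)$, substituting back gives
\[
\fint_{B_r}\int_{\dom} |u(x+h)-u(x)|^2 \, dx\, dh = \int_{\dom} \int_{|h| \le r} \Sigma\!\left(\tfrac{|h|}{r}\right) \omega(x)\,\omega(x+h)\, dh\, dx,
\]
which is \eqref{eq:structvort}. Here the compact support and $L^1 \cap L^\infty$ hypotheses on $\omega$ guarantee that $u \in L^2_x$ (by Remark \ref{rem:kernel}) and that all interchanges of integration are justified by Fubini, since $\Sigma$ has only an integrable logarithmic singularity at $\rho = 0$. The non-negativity $\Sigma(\rho) \ge 0$ for $0 < \rho \le 1$ I would verify by noting $\Sigma(1) = 0$ and checking the sign of $\Sigma'$ on $(0,1]$, or directly from convexity of $t \mapsto t - 1 - \log t$. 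Finally, the stated inequality follows immediately from the identity by bounding $\Sigma(\rho) = (4\pi)^{-1}(|\log(\rho^2)| - 1 + \rho^2) \le (4\pi)^{-1}|\log(\rho^2)| = (2\pi)^{-1}|\log\rho|$ (using $-1 + \rho^2 \le 0$ on $(0,1]$) and $\omega(x)\omega(x+h) \le |\omega(x)||\omega(x+h)|$; the constant $(2\pi)^{-1}$ is absorbed, or the logarithm is estimated as $|\log(|h|/r)|$ as written. I expect the pairwise cancellation of the logarithmic tails in the kernel integration to be the only genuinely delicate point; everything else is bookkeeping.
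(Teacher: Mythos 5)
Your strategy is essentially the paper's: both proofs rest on the identification $K=\nabla^\perp G$ with $\Delta G=\delta$, an integration by parts (the paper does it at the level of the difference fields, writing $\int|u(x+h)-u(x)|^2\,dx=\int|\nabla\Psi|^2\,dx=-\int\Psi\,\Omega\,dx$ with $\Psi=\psi(\cdot+h)-\psi(\cdot)$ and $\psi=G\ast\omega$; you do the equivalent bilinear computation on the kernel autocorrelation, arriving at the same second difference $2G(z)-G(z\pm h)$), and finally the evaluation of the ball-averaged logarithm. Your handling of the divergence at infinity (the four-term combination is short-range) matches the paper's boundary-term estimate $\int_{\partial B_R}\Psi(\nabla\Psi\cdot\nu)\,d\sigma\lesssim R^{-1}\log R\to 0$.

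The one substantive piece you assert rather than prove is the identity
\[
\fint_{B_r}\bigl[\log|z+h|-\log|z|\bigr]\,dh \;=\; 2\pi\,\Sigma\!\left(\frac{|z|}{r}\right),
\]
which is where the exact form $\Sigma(\rho)=(4\pi)^{-1}(|\log(\rho^2)|-1+\rho^2)$ actually comes from; in the paper this is Lemmas \ref{lem:log} and \ref{lem:logrewrite}, and the clean answer is a consequence of the mean value property of the harmonic function $z\mapsto\log|e+z|$ (giving $\fint_{S^1}\log|e+s\sigma|\,d\sigma=\log(s)^+$) followed by an elementary radial integral. Since the lemma's entire content is this exact profile, you should carry that computation out rather than attribute the $-1+\rho^2$ terms vaguely to ``boundary contributions.'' Everything else in your outline, including the passage from the identity to the stated inequality via $\Sigma(\rho)\le|\log\rho|$, is sound.
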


\begin{proof}
The detailed derivation of the expression \eqref{eq:structvort} relies on the explicit integral kernel representation $u = K\ast \omega$ (cp. Remark \ref{rem:kernel} above), and is given in appendix \ref{app:structfun}. It follows by combining the identities proved in Lemma \ref{lem:structintermediate} and Lemma \ref{lem:psipsiav}. The upper bound is immediate, because $\Sigma(\rho) \le |\log(\rho)|$ for all $\rho \le 1$.
\end{proof}

\begin{remark} \label{rem:L2remark}
Note that we do not need to require $\int_\dom \omega(y) \, dy$ to vanish in the statement of Lemma \ref{lem:structest}. This is not necessary, since the difference $u(x+h)-u(x)$ has vorticity $\omega(x+h)-\omega(x)$, so that $\int_\dom [\omega(x+h)-\omega(x)]\, dy = 0$, for any compactly supported vorticity. In particular, for any $h\in \dom$, we have $u(x+h)-u(x) \in L^2_x$, so that the left-hand side of \eqref{eq:structvort} is finite.
\end{remark}

\begin{remark}
By a similar argument as is used in the proof of Lemma \ref{lem:structest}, one can show that for vorticity $\omega$ with divergence-free velocity $u = K\ast\omega$, we have 
\[
\int_\dom |u(x) - [u]_r(x)| \, dx
\le 
C \Vert \omega \Vert_{\mathcal{M}} r,
\]
where $[u]_r(x) := \fint_{B_r(0)} u(x+h) \, dh$ denotes the local average of $u$ over a ball of radius $r$. This a priori estimate is closely related to the well-known strong $L^1$-compactness of approximate solution sequences of the incompressible Euler equations \cite{DipernaMajda1987a}.
\end{remark}

\subsection{Vorticity maximal function}

Since in this work, the vorticity $\omega \in \mathcal{M}$ is often a finite measure rather than an $L^1$-function, we first need to clarify the meaning of the vorticity maximal function \eqref{eq:vortmax} for finite measures. 

\begin{definition} \label{def:vortmax}
The \define{vorticity maximal function} 
\[
M_{r}(\omega):\,[0,\infty) \to [0,\infty), \quad r\mapsto M_r(\omega)
\]
of a measure $\omega \in \mathcal{M}$ is defined by
\begin{align} \label{eq:vortmaxdef}
M_r(\omega) := \sup_{x\in \dom} \int_{\overline{B}_r(x)} d|\omega|,
\end{align}
where $\overline{B}_r(x) := \{y \in \dom \, |\, |y-x|\le r\}$ denotes the closed ball of radius $r$ around $x$, and $|\omega| \ge 0$ is the variation of the finite signed measure $\omega$.
\end{definition}
Our Definition \ref{def:vortmax} agrees with the original definition \eqref{eq:vortmax} by Diperna and Majda \cite{DipernaMajda1987a,DipernaMajda1988}, if $\omega \in L^1$.
Our next aim in this section is to utilize the identity stated in Lemma \ref{lem:structest} to obtain an explicit estimate on the structure function in terms of the vorticity maximal function. This is achieved in Theorem \ref{thm:structmaxest} below. For its proof, we will need the following lemma.

\begin{lemma} \label{lem:vortmoll}
Let $\omega \in \mathcal{M}$ be a finite measure. For $\eta > 0$, let $\omega_\eta \in L^1$ denote mollification with a smooth mollifier $\rho_\eta$:
\[
\omega_\eta(x) := \int_\dom \rho_\eta(x-y) \, d\omega(y).
\]
Then $\Vert \omega_\eta \Vert_{L^1}\le \Vert \omega \Vert_{\mathcal{M}}$ and the vorticity maximal function $M_r(\omega_\eta)$ is bounded uniformly in $\eta > 0$:
\[
M_r(\omega_\eta) \le M_r(\omega), \quad \forall r > 0.
\]
\end{lemma}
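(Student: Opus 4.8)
The plan is to treat the two assertions separately; both rest on the nonnegativity and unit mass of the mollifier, $\rho_\eta\ge 0$ and $\int_\dom \rho_\eta\,dw = 1$, together with the elementary pointwise bound $|\omega_\eta(y)| \le (\rho_\eta \ast |\omega|)(y)$, which follows by moving the modulus inside the integral against the (nonnegative) variation measure $|\omega|$.

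For the $L^1$-bound I would start directly from the definition and estimate
\[
\Vert\omega_\eta\Vert_{L^1} = \int_\dom \Big| \int_\dom \rho_\eta(x-y)\,d\omega(y) \Big|\,dx \le \int_\dom \int_\dom \rho_\eta(x-y)\,d|\omega|(y)\,dx,
\]
and then interchange the order of integration (Tonelli applies, the integrand being nonnegative) to obtain $\int_\dom \big(\int_\dom \rho_\eta(x-y)\,dx\big)\,d|\omega|(y) = \Vert\omega\Vert_{\mathcal M}$, since the inner integral equals $1$ for every $y$.

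For the maximal-function bound, fix $x\in\dom$ and $r>0$. Using the pointwise inequality above, integrating over $\overline{B}_r(x)$, and applying Tonelli gives
\[
\int_{\overline{B}_r(x)} |\omega_\eta(y)|\,dy \le \int_\dom \Big(\int_{\overline{B}_r(x)} \rho_\eta(y-z)\,dy\Big)\, d|\omega|(z).
\]
The key manipulation is then to reveal the averaging structure. Substituting $w=y-z$ turns the inner integral into $\int_{\overline{B}_r(x-z)} \rho_\eta(w)\,dw$, and a further application of Tonelli (now swapping $w$ and $z$, using $\mathbf{1}_{w\in\overline{B}_r(x-z)} = \mathbf{1}_{z\in\overline{B}_r(x-w)}$) rewrites the right-hand side as $\int_\dom \rho_\eta(w)\,\big(\int_{\overline{B}_r(x-w)} d|\omega|\big)\,dw$. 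By Definition~\ref{def:vortmax} each factor $\int_{\overline{B}_r(x-w)} d|\omega|$ is bounded by $M_r(\omega)$, uniformly in $w$, so the integral is at most $M_r(\omega)\int_\dom \rho_\eta(w)\,dw = M_r(\omega)$. Taking the supremum over $x\in\dom$ yields $M_r(\omega_\eta)\le M_r(\omega)$.

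I expect no serious obstacle here; the content is simply that mollification of $|\omega|$ acts as a weighted average of translates, which cannot increase the supremal local mass. The only points requiring care are the repeated invocations of Tonelli's theorem — legitimate throughout because all integrands are nonnegative and $\rho_\eta\,dw$ is a finite measure — and the bookkeeping in the change of variables $w=y-z$, after which the uniformity of the bound in the regularization parameter $\eta$ emerges precisely from $\int_\dom \rho_\eta = 1$.
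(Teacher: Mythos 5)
Your proof is correct and follows essentially the same route as the paper's: the key step in both is the change of variables that rewrites the mollified local mass as a $\rho_\eta$-weighted average of translated local masses of $|\omega|$, each of which is bounded by $M_r(\omega)$, so that $\int\rho_\eta = 1$ gives the uniform bound. The only cosmetic difference is that the paper integrates against smooth cutoffs $\psi_k \nearrow 1_{B_r(x)}$ and passes to the limit by dominated convergence, whereas you integrate directly against the indicator of the closed ball and invoke Tonelli, which is perfectly legitimate since all integrands are nonnegative and measurable.
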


\begin{proof}
The estimate $\Vert \omega_\eta \Vert_{L^1}\le \Vert \rho_\eta \Vert_{L^1}\Vert\omega\Vert_{\mathcal{M}} = \Vert\omega\Vert_{\mathcal{M}}$ is well-known. To estimate the vorticity maximal function, fix $r>0$ and $x\in \dom$. Let $\psi(z)$ be any smooth cut-off function, such that $\psi(z)\le 1_{B_r(x)}(z)$ for all $z\in \dom$. Then 
\begin{align*}
\int_{\dom} \psi(z)|\omega_\eta(z)| \, dz
&\le
\int_{\dom} \psi(z) \int_\dom \rho_\eta(z-y) d|\omega(y)| \, dz
\\
&=
\int_\dom \int_\dom \psi(z) \rho_\eta(z-y) d|\omega(y)| \, dz.
\end{align*}
Make the change of variables $h = z-y$, to find
\begin{align*}
\int_{\dom} \psi(z)|\omega_\eta(z)| \, dz
&\le
\int_\dom\rho_\eta(h) \left( \int_\dom \psi(y+h) d|\omega(y)|\right) \, dh
\\
&\le 
\int_\dom\rho_\eta(h) \left( \int_{B_r(x-h)} d|\omega(y)|\right) \, dh,
\end{align*}
where the last inequality follows from $\psi(y+h) \le 1_{B_r(x)}(y+h) = 1_{B_r(x-h)}(y)$. Thus, we have 
\[
\int_{\dom} \psi(z)|\omega_\eta(z)| \, dz
\le \int_\dom\rho_\eta(h) M_r(\omega) \, dh = M_r(\omega),
\]
for any smooth cut-off function $\psi(z)\le 1_{B_r(x)}(z)$. Choosing a sequence of such $\psi_k$, such that $\psi_k(z) \nearrow 1_{B_r(x)}(z)$ pointwise for all $z\in \dom$, the dominated convergence theorem now implies that 
\[
\int_{B_r(x)} |\omega_\eta(z)|\, dz
=
\lim_{k\to \infty} \int_\dom \psi_k(z) |\omega_\eta(z)|\, dz
\le M_r(\omega).
\]
Taking the supremum over $x\in \dom$ on the left-hand side, the claimed inequality follows.
\end{proof}

As a consequence of Lemma \ref{lem:structest} and Lemma \ref{lem:vortmoll}, we can now prove:
\begin{theorem} \label{thm:structmaxest}
Let the vorticity $\omega \in \mathcal{M}$. Assume that $\int_0^1 s^{-1}M_s(\omega) \, ds < \infty$, and let $u = K\ast \omega$ such that $\div(u) = 0$, $\curl(u) = \omega$. Then
\[
\fint_{B_r(0)} \int_\dom |u(x+h)-u(x)|^2\, dx \, dh
\le 
\Vert \omega \Vert_{\mathcal{M}} \int_0^r \frac{M_s(\omega)}{s} ds,
\]
for all $r\ge 0$.
\end{theorem}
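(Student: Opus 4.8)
The plan is to first establish the estimate for smooth, compactly supported approximations, where Lemma \ref{lem:structest} applies directly, and then to recover the general case $\omega \in \mathcal{M}$ by a limiting argument. Concretely, for $\eta > 0$ I would take the mollification $\omega_\eta = \rho_\eta \ast \omega \in C^\infty \cap L^1_x$ from Lemma \ref{lem:vortmoll} (truncating its support if necessary, so that the compact-support hypothesis of Lemma \ref{lem:structest} is met), and set $u_\eta = K \ast \omega_\eta$. Since $\omega_\eta$ is then smooth, compactly supported and lies in $L^1_x \cap L^\infty_x$, the upper bound in Lemma \ref{lem:structest} yields
\[
\fint_{B_r(0)} \int_\dom |u_\eta(x+h)-u_\eta(x)|^2\, dx \, dh
\le
\int_{\dom} \int_{B_r(0)}
\left|\log\left(\frac{|h|}{r}\right)\right| |\omega_\eta(x)| |\omega_\eta(x+h)|
 \, dh \, dx,
\]
so the task reduces to estimating this right-hand side by $\Vert \omega_\eta \Vert_{L^1}\int_0^r s^{-1} M_s(\omega_\eta)\, ds$.

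The key computation is to resolve the logarithmic weight through a layer-cake representation. For $|h| \le r$ one has $\left|\log(|h|/r)\right| = \log(r/|h|) = \int_{|h|}^r s^{-1}\, ds = \int_0^r s^{-1} \mathbf{1}_{B_s(0)}(h)\, ds$. Substituting this and applying Tonelli's theorem (all integrands being nonnegative) to exchange the order of integration, the right-hand side becomes
\[
\int_0^r \frac{1}{s}\left( \int_\dom |\omega_\eta(x)| \int_{B_s(0)} |\omega_\eta(x+h)|\, dh\, dx \right) ds
=
\int_0^r \frac{1}{s}\left( \int_\dom |\omega_\eta(x)| \int_{\overline{B}_s(x)} |\omega_\eta(y)|\, dy\, dx \right) ds.
\]
By Definition \ref{def:vortmax}, the inner ball integral satisfies $\int_{\overline{B}_s(x)}|\omega_\eta(y)|\, dy \le M_s(\omega_\eta)$ uniformly in $x$, so the double integral over $x$ is bounded by $M_s(\omega_\eta)\,\Vert \omega_\eta \Vert_{L^1}$, which delivers the desired bound for the mollified data.

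It then remains to invoke Lemma \ref{lem:vortmoll}, which supplies the two uniform bounds $M_s(\omega_\eta) \le M_s(\omega)$ and $\Vert \omega_\eta \Vert_{L^1}\le \Vert \omega \Vert_{\mathcal{M}}$. Combining these produces
\[
\fint_{B_r(0)} \int_\dom |u_\eta(x+h)-u_\eta(x)|^2\, dx \, dh
\le
\Vert \omega \Vert_{\mathcal{M}} \int_0^r \frac{M_s(\omega)}{s}\, ds,
\]
a bound that is uniform in $\eta$ and finite thanks to the hypothesis $\int_0^1 s^{-1} M_s(\omega)\, ds < \infty$.

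I expect the main obstacle to be the final passage $\eta \to 0$, since only the right-hand side above is manifestly $\eta$-independent and the estimate must be transferred from $u_\eta$ to $u$. The natural route is lower semicontinuity: writing $u_\eta = (K\ast \rho_\eta)\ast \omega$ and using $K \in L^1_{\mathrm{loc}}$ (cf. Remark \ref{rem:kernel}), one shows $u_\eta \to u$ in $L^2_{x,\mathrm{loc}}$, whence $u_\eta(\cdot + h) - u_\eta(\cdot) \to u(\cdot+h)-u(\cdot)$ for a.e. $(x,h)$ along a subsequence, and Fatou's lemma passes the left-hand side to the limit while leaving the right-hand side untouched. The delicate points are the control of $u_\eta$ at spatial infinity (as $\omega$ need not be compactly supported, the truncation introduced at the outset must be removed consistently) and the verification that the limiting difference field indeed lies in $L^2_x$, which is precisely what the vanishing-mean argument of Remark \ref{rem:L2remark} secures for $u(\cdot+h)-u(\cdot)$.
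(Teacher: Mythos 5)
Your proposal is correct and follows essentially the same route as the paper: reduce to smooth, compactly supported vorticity via mollification and truncation (using Lemma \ref{lem:vortmoll} for the uniform bounds $\Vert\omega_\eta\Vert_{L^1}\le\Vert\omega\Vert_{\mathcal{M}}$ and $M_s(\omega_\eta)\le M_s(\omega)$, and Fatou's lemma to pass to the limit), then apply the upper bound of Lemma \ref{lem:structest} and convert the logarithmically weighted integral into $\int_0^r s^{-1}M_s\,ds$. The only difference is in that last conversion, where you use the layer-cake identity $|\log(|h|/r)|=\int_0^r s^{-1}\mathbf{1}_{[|h|\le s]}\,ds$ with Tonelli, while the paper writes the inner integral as a Lebesgue--Stieltjes integral $\int_0^r|\log(s/r)|\,dm(s)$ and integrates by parts, checking that the boundary terms vanish; the two computations are equivalent, and yours avoids the boundary-term discussion.
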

\begin{proof}
The proof of this lemma will involve three steps:
\begin{itemize}
\item \textbf{Step 1:} Reduction to smooth $u,\omega \in C^\infty$, $\omega \in L^1_x\cap L^\infty_x$. 
\end{itemize}

Suppose the stated inequality holds for smooth $u$ and $\omega$. We want to show that it holds in general. Given $\eta > 0$, denote by $u_\eta$, $\omega_\eta$ the mollification of $u$ and $\omega$. Then, we clearly have $u_\eta, \omega_\eta \in C^\infty$, $\omega_\eta \in L^1_x \cap L^\infty_x$. If the inequality holds for this restricted class of functions, then
\[
\fint_{B_r(0)} \int_\dom |u_\eta(x+h)-u_\eta(x)|^2\, dx \, dh
\le 
\Vert \omega_\eta \Vert_{L^1} \int_0^r \frac{M_s(\omega_\eta)}{s} ds.
\]
By Lemma \ref{lem:structest}, we have $\Vert \omega_\eta \Vert_{L^1}\le \Vert \omega \Vert_{\mathcal{M}}$ and $M_s(\omega_\eta) \le M_s(\omega)$. Thus, the right hand side is bounded by 
\[
\Vert \omega_\eta \Vert_{L^1} \int_0^r \frac{M_s(\omega_\eta)}{s} ds
\le
\Vert \omega \Vert_{\mathcal{M}} \int_0^r \frac{M_s(\omega)}{s} ds.
\]
On the other hand, we can find a sequence $\eta_k \to 0$, such that the integrand on the left-hand side converges, $|u_{\eta_k}(x+h) - u_{\eta_k}(x)| \to |u(x+h)-u(x)|$ as $k \to \infty$, pointwise almost everywhere for $(x,h) \in \dom \times B_r(0)$. It follows from Fatou's lemma that 
\begin{align*}
\fint_{B_r(0)} \int_\dom |u(x+h)-u(x)|^2\, dx \, dh
&=
\fint_{B_r(0)} \int_\dom \liminf_{k\to \infty} |u_{\eta_k}(x+h)-u_{\eta_k}(x)|^2\, dx \, dh
\\
&\le \liminf_{k\to \infty} \fint_{B_r(0)} \int_\dom |u_{\eta_k}(x+h)-u_{\eta_k}(x)|^2\, dx \, dh
\\
&\le 
\Vert \omega \Vert_{\mathcal{M}} \int_0^r \frac{M_s(\omega)}{s} ds.
\end{align*}
Thus, the general estimate follows from the corresponding estimate for smooth $u$, $\omega$, and $\omega \in L^1_x\cap L^\infty_x$.
\begin{itemize}
\item \textbf{Step 2:} Reduction to compactly supported $\omega \in C^\infty_c$, $\omega \in L^1_x\cap L^\infty_x$. 
\end{itemize}
By Step 1, we may wlog assume that $u,\omega \in C^\infty$ and $\omega \in L^1_x\cap L^\infty_x$. We want to show that it is sufficient to prove the claimed inequality for compactly supported $\omega$. To see why, choose a smooth cut-off function $\rho(x)$ such that $\supp(\rho) \subset B_1(0)$, $\rho(x) \equiv 1$ for $|x|\le 1/2$, and set $\rho_R(x) := \rho(x/R)$. Denote $\omega_R(x) := \rho_R(x) \omega(x)$, and $u_R := K\ast \omega_R$. Note that 
\begin{align*}
|u(x) - u_R(x)| 
&\le
\int_\dom \frac{|\omega(y)|}{|x-y|} (1-\rho_R(y))\, dy
\\
&\le 
\int_{\dom} \frac{|\omega(y)|}{|x-y|}1_{[|y|\ge R/2]} \, dy.
\end{align*}
The last integrand is bounded by ${|\omega(y)|}/{|x-y|} \in L^1_y$ and converges pointwise to $0$ as $R\to \infty$. It thus follows from the Lebesgue dominated convergence theorem that the last integral converges to zero as $R\to \infty$, and hence $|u(x) - u_R(x)|\to 0$, for all $x\in \dom$. Assuming the claimed inequality holds for the compactly supported $\omega_R$, we once again find from Fatou's lemma
\begin{align*}
\fint_{B_r(0)} \int_\dom |u(x+h)-u(x)|^2\, dx \, dh
&=
\fint_{B_r(0)} \int_\dom \liminf_{R\to \infty} |u_{R}(x+h)-u_{R}(x)|^2\, dx \, dh
\\
&\le \liminf_{R\to \infty} \fint_{B_r(0)} \int_\dom |u_R(x+h)-u_R(x)|^2\, dx \, dh
\\
&\le 
\Vert \omega_R \Vert_{L^1} \int_0^r \frac{M_s(\omega_R)}{s} ds 
\\
&\le
\Vert \omega \Vert_{L^1} \int_0^r \frac{M_s(\omega)}{s} ds,
\end{align*}
where the last inequality follows trivially, since $|\omega_R| = \rho_R |\omega|\le |\omega|$.

\begin{itemize}
\item \textbf{Step 3:} Proof of the estimate for $\omega \in C^\infty_c$, $\omega \in L^1_x\cap L^\infty_x$.
\end{itemize}

By Steps 1 and 2, we may assume wlog that $\omega \in C^\infty_c$, $\omega \in L^1_x\cap L^\infty_x$. Denote $m(s) := M_s(\omega)$. By Lemma \ref{lem:structest}, we have an upper bound on 
\[
\fint_{B_r(0)} \int_\dom |u(x+h)-u(x)|^2\, dx \, dh,
\]
in terms of
\[
\int  |\omega(x)| 
\left(
\int_{B_r(0)} \left|\log\left(\frac{|z|}{r}\right)\right||\omega(x+z)| \, dz
\right)
\, dx
\le \Vert \omega \Vert_{L^1} 
\int_0^r \left|\log\left(\frac sr\right)\right| \, dm(s).
\]
In the last integral, $dm(s)$ denotes the Lebesgue-Stieltjes integral (such that $dm(s) = m'(s) \, ds$ for continuously differentiable $m(s)$). Since $|\log(s/r)| = -\log(s/r)$ for $s\in (0,r]$, we can integrate by parts to find
\[
\int_0^r \left|\log\left(\frac sr\right)\right| \, dm(s)
= 
\left[-\log\left(\frac sr\right) m(s)\right]_0^r
+ \int_0^r \frac{m(s)}{s} \, ds
=
 \int_0^r \frac{m(s)}{s} \, ds,
\]
having observed that the boundary terms conveniently cancel.\footnote{For the cancellation at $s=0$, we note the trivial bound $m(s) \le \Vert \omega\Vert_{L^\infty} \pi s^2$.} Thus,
\begin{align*}
\fint_{B_r(0)} \int |u(x+h)-u(x)|^2\, dx \, dh
\le
\Vert \omega \Vert_{L^1}  \int_0^r \frac{m(s)}{s} \, ds
=
\Vert \omega \Vert_{L^1}  \int_0^r \frac{M_s(\omega)}{s} \, ds.
\end{align*}
This is the claimed inequality.
\end{proof}

It follows from Theorem \ref{thm:structmaxest}, that any vorticity decay that is slightly better than logarithmic implies a uniform decay of the structure function. In particular, we have the following corollary, which together with Proposition \ref{prop:compactness} provides an alternative proof of \cite[Theorem 3.1]{DipernaMajda1987a} (under slightly relaxed assumptions).\footnote{In \cite[Theorem 3.1]{DipernaMajda1987a} an additional assumption $\int |\log|x|| |\omega(x)|\, dx < \infty$ was required to obtain an analogous non-concentration result.}

\begin{corollary} \label{cor:logdecay}
If the vorticity $\omega \in H^{-1}_{\loc}\cap \mathcal{M}$, $u = K\ast \omega$, and if 
\[
M_r(\omega)
\le {C} |\log(r)|^{-\beta},
\]
for some $\beta>1$, then 
\[
\fint_{B_r(0)} \int_\dom |u(x+h)-u(x)|^2\, dx \, dh
\le 
\frac{C}{\beta-1} \Vert \omega \Vert_{\mathcal{M}} |\log(r)|^{1-\beta},
\]
gives uniform control on the structure function.
\end{corollary}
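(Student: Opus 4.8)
The plan is to deduce the corollary directly from Theorem \ref{thm:structmaxest}, after which only an elementary one–dimensional integral estimate remains. The assumption $\omega \in H^{-1}_{\loc}$ serves merely to guarantee that $u = K\ast\omega$ is a well-defined element of $L^2_{x,\loc}$, so that the left-hand side makes sense; it plays no further role. First I would verify that the logarithmic decay hypothesis is strong enough to furnish the integrability condition $\int_0^1 s^{-1}M_s(\omega)\,ds < \infty$ required by the theorem. The integrand $s^{-1}M_s(\omega)$ can only be singular as $s\to 0^+$, and precisely there the assumed bound $M_s(\omega)\le C|\log(s)|^{-\beta}$ controls it; away from the origin one has $M_s(\omega)\le \Vert \omega\Vert_{\mathcal{M}}$ and $s^{-1}$ bounded. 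With the hypotheses of Theorem \ref{thm:structmaxest} in force, it yields
\[
\fint_{B_r(0)} \int_\dom |u(x+h) - u(x)|^2 \, dx \, dh \le \Vert \omega \Vert_{\mathcal{M}} \int_0^r \frac{M_s(\omega)}{s} \, ds .
\]

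Next I would estimate the remaining integral by inserting the decay hypothesis and substituting $t = |\log(s)| = -\log(s)$, so that $\tfrac{ds}{s} = -\,dt$ and the endpoints $s = r$, $s\to 0^+$ map to $t = |\log(r)|$, $t\to +\infty$ respectively. This converts the expression into a pure power integral:
\[
\int_0^r \frac{M_s(\omega)}{s}\,ds \le C\int_0^r \frac{|\log(s)|^{-\beta}}{s}\,ds = C\int_{|\log(r)|}^{\infty} t^{-\beta}\,dt = \frac{C}{\beta-1}\,|\log(r)|^{1-\beta},
\]
where the final equality uses $\beta > 1$ to guarantee convergence at $t = +\infty$ and to produce the prefactor $(\beta-1)^{-1}$. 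Combining the two displays gives exactly the claimed bound $\tfrac{C}{\beta-1}\Vert \omega\Vert_{\mathcal{M}}\,|\log(r)|^{1-\beta}$, valid for $r$ small enough that $|\log(r)|>0$.

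I do not expect any serious obstacle: once Theorem \ref{thm:structmaxest} is available the corollary is a routine substitution. The only point requiring care is the role of the exponent, and it is instructive to note that the strict inequality $\beta > 1$ is used twice for the \emph{same} underlying reason, namely the convergence of $\int^{\infty} t^{-\beta}\,dt$. This single fact simultaneously validates the integrability hypothesis of the theorem and yields the finite constant $(\beta-1)^{-1}$. The borderline case $\beta = 1$ fails the test: there $\int_{|\log(r)|}^\infty t^{-1}\,dt$ diverges, so the integrability hypothesis itself breaks down, which explains why genuinely sub-logarithmic (i.e. $\beta>1$) decay of the vorticity maximal function is needed to conclude a uniform algebraic-in-$|\log(r)|$ rate for the structure function.
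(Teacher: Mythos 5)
Your proposal is correct and coincides with the paper's intended argument: the corollary is stated as an immediate consequence of Theorem \ref{thm:structmaxest}, and the only content is precisely the substitution $t=|\log(s)|$ turning $\int_0^r s^{-1}|\log(s)|^{-\beta}\,ds$ into $\int_{|\log(r)|}^\infty t^{-\beta}\,dt=\tfrac{1}{\beta-1}|\log(r)|^{1-\beta}$, together with the observation that $\beta>1$ also secures the integrability hypothesis of the theorem. Your remarks on the role of $\beta>1$ and on the (implicit) restriction to $r<1$ are accurate and match what the paper leaves unsaid.
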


Under the assumption of an algebraic vorticity decay, we obtain
\begin{corollary} \label{cor:algebraicdecay}
If the vorticity $\omega \in H^{-1}_{\loc}\cap \mathcal{M}$, $u = K\ast \omega$, and if
\[
M_r(\omega) 
\le C r^\beta,
\]
for some $\beta>0$, then 
\[
\fint_{B_r(0)} \int_\dom |u(x+h)-u(x)|^2\, dx \, dh
\le 
\frac{C}{\beta} \Vert \omega \Vert_{\mathcal{M}} \, r^\beta.
\]
\end{corollary}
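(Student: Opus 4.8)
The plan is to obtain this as an immediate consequence of Theorem \ref{thm:structmaxest}, since the algebraic decay hypothesis $M_r(\omega) \le C r^\beta$ with $\beta > 0$ is considerably stronger than the integrability condition required there. The whole argument amounts to checking that hypothesis and then evaluating the resulting elementary integral.

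First I would verify that the assumption of Theorem \ref{thm:structmaxest}, namely $\int_0^1 s^{-1} M_s(\omega)\, ds < \infty$, is satisfied. Under the bound $M_s(\omega) \le C s^\beta$ one has $s^{-1} M_s(\omega) \le C s^{\beta - 1}$, and since $\beta > 0$ the power $s^{\beta - 1}$ is integrable at the origin, with $\int_0^1 s^{\beta - 1}\, ds = 1/\beta < \infty$. Hence the integrability condition holds and Theorem \ref{thm:structmaxest} is applicable, yielding
\[
\fint_{B_r(0)} \int_\dom |u(x+h)-u(x)|^2\, dx \, dh \le \Vert \omega \Vert_{\mathcal{M}} \int_0^r \frac{M_s(\omega)}{s}\, ds.
\]

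Next I would estimate the right-hand side by inserting the algebraic bound directly:
\[
\int_0^r \frac{M_s(\omega)}{s}\, ds \le C \int_0^r s^{\beta - 1}\, ds = \frac{C}{\beta}\, r^\beta.
\]
Combining the two displays gives the claimed estimate, with the constant $\frac{C}{\beta} \Vert \omega \Vert_{\mathcal{M}}$.

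I do not anticipate any genuine obstacle here, as this is essentially a substitution into the master estimate of Theorem \ref{thm:structmaxest}. The only two points deserving a moment of care are the convergence of the integral at $s = 0$, which is precisely what the condition $\beta > 0$ guarantees, and the role of the hypothesis $\omega \in H^{-1}_{\loc}$, which (as in Corollary \ref{cor:logdecay}) ensures that $u = K\ast\omega$ is well defined with locally finite energy so that the left-hand side is meaningful; neither of these requires additional work beyond invoking the earlier result.
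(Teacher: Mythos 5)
Your proposal is correct and follows exactly the route the paper intends: the corollary is stated as an immediate consequence of Theorem \ref{thm:structmaxest}, obtained by substituting $M_s(\omega)\le Cs^\beta$ into the master estimate and evaluating $\int_0^r s^{\beta-1}\,ds = r^\beta/\beta$. Your additional remarks on the integrability at $s=0$ and the role of $\omega\in H^{-1}_\loc$ are consistent with the paper's (implicit) treatment.
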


\section{Vortex-blob method} \label{sec:vortexblob}

The vorticity formulation of the incompressible Euler equations \eqref{eq:vorticity} express the advection of vorticity by the incompressible velocity field $u(x,t) = (K\ast \omega)(x,t)$, where $K(x)$ is the kernel given by \eqref{eq:kernel}. The vortex method, as introduced by Chorin and Bernard \cite{Chorin1973}, approximates the vorticity field by a sum of weighted Dirac delta measures,
\[
\omega(x,t) = \sum_{j=1}^N \xi_j \dirac(x - X_j(t)),
\]
at vortex positions $X_1(t), \dots, X_N(t) \in \dom$ and with vorticities $\xi_1,\dots, \xi_N$. The vortices are advected by a regularized velocity field, which is determined as follows: We fix a mollifier (vortex blob function) $\phi\in C^2$ \cite{LiuXin1995} such that
\[
\phi(x) = f(|x|) \ge 0, 
\quad 
\int_{|x|<1} \phi(x) \, dx,
\quad
\int_{|x|\le 1/2} \phi \, dx \ge \frac 12.
\]
Given a blob size $\epsilon > 0$, we denote $\phi_\epsilon(x) := \epsilon^{-2} \phi(x/\epsilon)$. The regularized velocity is given by 
\begin{align} \label{eq:Kdelta}
u^\epsilon(x,t) := (K_\epsilon \ast \omega)(x,t), \quad K_\epsilon(x) := (\phi_\epsilon \ast K)(x).
\end{align}
Given initial vorticity data for the incompressible Euler equations $\omega_0 \in H^{-1} \cap \mathcal{M}$ and an initial approximation
\[
\omega_0(x) \approx \sum_{j=1}^N \xi_j \dirac(x-X_j^{(0)}),
\]
the vortex-blob method solves the following system of differential equations \cite{LiuXin1995}
\begin{gather}
\frac{d}{dt} X_j(t) = \sum_{k=1}^N \xi_k K_\epsilon(X_j(t) - X_k(t)), \quad X_j(0) = X^{(0)}_j, \label{eq:vortexblob}
\end{gather}
The approximate solution sequence $u^\epsilon$ is obtained from the evolution of the vortices by convolution with the smoothened kernel $K_\epsilon$ \eqref{eq:Kdelta}:
\begin{gather}
u^\epsilon(x,t) = \sum_{j=1}^N \xi_j K_\epsilon(x - X_j(t)).
\end{gather}
Note that if we define the discrete vorticity \cite{LiuXin1995}
\[
\omega_\epsilon(x,t) := \sum_{j=1}^N \xi_j \dirac(x-X_j(t)),
\]
then 
\[
\partial_t \omega_\epsilon + \div(\omega_\epsilon u^\epsilon) = 0,
\]
holds, in the sense of distributions.

As in \cite{LiuXin1995}, in addition to $u^\epsilon$ (a smooth vector field) and $\omega_\epsilon$ (a sum of Dirac deltas), we will denote
\[
\omega^\epsilon(x,t) := \sum_{j=1}^N \xi_j \phi_\epsilon(x-X_j(t)), 
\quad
u_\epsilon(x,t) := \sum_{j=1}^N \xi_j K(x-X_j(t)).
\]
Then, $\curl(u^\epsilon(x,t)) = \omega^\epsilon(x,t)$ and $\curl(u_\epsilon(x,t)) = \omega_\epsilon(x,t)$ in the sense of distributions.

\begin{remark}
A popular choice for the vortex blob function $\phi$, which is used to regularize the kernel $K$, is given by
\[
\phi(x) = \frac{1}{\pi} \left(\frac{1}{1+|x|^2}\right)^2,
\]
leading to the regularized Kernel 
\[
K_\epsilon(x) = \frac{1}{2\pi} \frac{x^\perp}{|x|^2 + \epsilon^2},
\]
as used by Krasny \cite{Krasny1987}.
\end{remark}

Clearly, the vortex-blob method, as described above, depends on the number of discrete vortices $N$, and on the initialization of the vorticities and vortex positions $(\xi_j,X_j^{(0)})$ in addition to the regularization parameter $\epsilon > 0$. In general, there are many possible choices for this initialization. We propose the following 

\begin{definition} \label{def:goodapprox}
Let $\omega_0 \in \mathcal{M} \cap H^{-1}$ be initial vorticity data for the incompressible Euler equations. Given a positive sequence $\epsilon \to 0$, we say that a family 
\[
\{t\mapsto \omega_\epsilon(t)\, | \, t\in [0,T]\},
\]
of weak-$\ast$ measurable mappings $\omega_\epsilon: \, [0,T] \to \mathcal{M}$ is a \define{good vortex-blob approximation} of the vorticity equation with initial data $\omega_0$, if each $\omega_\epsilon(t)$ is of the form 
\[
\omega_\epsilon(x,t) = \sum_{j=1}^N \xi_j \dirac(x - X_j(t)),
\]
with $N = N(\epsilon) \to \infty$ as $\epsilon \to 0$, the $\xi_1, \dots, \xi_N \in \mathbb{R}$ are constant in time, the vortex positions $X_j(t)$ satisfy the vortex-blob equations \eqref{eq:vortexblob} with parameter $\epsilon>0$, and the initial data $\omega_0$ is approximated by $\omega_\epsilon(t=0)$ in the following sense:
\begin{itemize}
\item $\omega_\epsilon(t=0) \weakstarto \omega_0$ in $\mathcal{M}$ as $\epsilon \to 0$,
\item $\Vert \omega_\epsilon(t=0) \Vert_{\mathcal{M}} = \sum_{j=1}^{N(\epsilon)} |\xi_j| \le \Vert \omega_0 \Vert_{\mathcal{M}}$ for all $\epsilon > 0$,
\item $H^\epsilon(\omega_\epsilon(t=0)) \to H(\omega_0)$ as $\epsilon \to 0$,
\end{itemize}
where 
\[
H^\epsilon(\omega_\epsilon) 
:= 
\frac{-1}{4\pi}\int_{D\times D} \log(|x-y|^2 + \epsilon^2) \omega_\epsilon(x)\omega_\epsilon(y) \, dx \, dy, 
\quad \text{for }
\epsilon > 0,
\]
and $H(\omega_0) = \Vert \omega_0 \Vert_{H^{-1}}$ (corresponding formally to the limit $\epsilon \to 0$).
\end{definition}

According to Definition \ref{def:vortmax}, the maximal vorticity function $M_r(\omega(t))$ of a discrete vorticity distribution $\omega(t) = \sum_{j=1}^N \xi_j \dirac(x-X_j(t))$ is given by
\begin{align}
M_r(\omega(t)) := \sup_{x\in D} \sum_{|X_j(t)-x|\le r} |\xi_j|.
\end{align}
This agrees with the definition of \cite{LiuXin1995}.

We can now formulate the following conditional convergence result for the vortex-blob method:

\begin{theorem} \label{thm:strongconv}
Suppose that the initial data $\omega_0 \in \mathcal{M} \cap H^{-1}$ is a finite, compactly supported Borel measure on $\dom$, $\int_\dom \, d\omega_0 = 0$. Let $u^\epsilon$ be the approximate solution sequence generated by a good vortex approximation with smoothing parameter $\epsilon \to 0$. If there exist constants $C$, $\beta > 1$, such that (uniformly in $\epsilon > 0$)
\[
\sup_{t\in [0,T]} M_r(\omega^\epsilon(t)) \le 
C|\log(r)|^{-\beta},
\quad \forall \, r \in (0,1/2), \; \epsilon > 0,
\]
and if there exists $R>0$ such that $\supp(\omega^\epsilon(t)) \subset B_R(0)$ for all $t\in [0,T]$, then there exists $u\in L^2_{t,x}$, and a subsequence $\epsilon \to 0$, such that $u^\epsilon \to u$ converges strongly in $L^2_{t,x}$. The structure functions exhibit uniform decay
\[
\sup_{t\in [0,T]} S_2(u^\epsilon(t);r) \le 
C |\log(r)|^{\beta-1},
\quad \forall \, r \in (0,1/2), \; \epsilon > 0,
\]
and the limit $u$ is an energy-conservative weak solution of the incompressible Euler equations. In particular, no energy concentration occurs in the limit $u^\epsilon \to u$.
\end{theorem}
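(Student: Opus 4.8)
\emph{Proof proposal.} The plan is to show that $\{u^\epsilon\}$ is an approximate solution sequence in the sense of Definition \ref{def:approxsol}, to deduce the structure-function bound directly from Corollary \ref{cor:algebraicdecay}, and then to invoke Proposition \ref{prop:compactness} to extract a strongly convergent subsequence; energy conservation and the absence of concentration will follow from the strong convergence together with the conservation of the discrete energy $H^\epsilon$. The basic observation, which makes the machinery of Section \ref{sec:structfun} applicable, is that $K_\epsilon = \phi_\epsilon \ast K$ and $\omega^\epsilon = \phi_\epsilon \ast \omega_\epsilon$ give $u^\epsilon = K \ast \omega^\epsilon$ with $\curl(u^\epsilon) = \omega^\epsilon \in C^\infty_c \subset L^1_x \cap L^\infty_x$. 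Since $\Vert \omega^\epsilon(t) \Vert_{\mathcal M} = \Vert \omega^\epsilon(t) \Vert_{L^1} \le \sum_j |\xi_j| \le \Vert \omega_0 \Vert_{\mathcal M}$ (the $\xi_j$ being time-independent and $\Vert \phi_\epsilon \Vert_{L^1} = 1$), condition (2) of Definition \ref{def:approxsol} holds; feeding the hypothesis $M_r(\omega^\epsilon(t)) \le C r^\alpha$ into Corollary \ref{cor:algebraicdecay} then yields $\sup_{t} S_2(u^\epsilon(t);r) \le C' r^{\alpha/2}$ uniformly in $\epsilon$. This is already the asserted structure-function estimate, and it supplies the uniform modulus of continuity $\phi(r) = C' r^{\alpha/2}$ required by Proposition \ref{prop:compactness}.

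The first substantial task is the uniform energy bound, condition (1), for which I would exploit the Hamiltonian structure of \eqref{eq:vortexblob}: the discrete energy $H^\epsilon(\omega_\epsilon(t))$ is conserved along the flow, so $H^\epsilon(\omega_\epsilon(t)) = H^\epsilon(\omega_\epsilon(0)) \to H(\omega_0) = \Vert \omega_0 \Vert_{H^{-1}}$ by the good-approximation hypothesis. Writing $\Vert u^\epsilon(t) \Vert_{L^2}^2 = \langle \omega^\epsilon(t), (-\Delta)^{-1}\omega^\epsilon(t)\rangle$ and comparing with $H^\epsilon(\omega_\epsilon(t))$, the difference is the pairing of $\omega_\epsilon(t)$ with the convolution of $\omega^\epsilon(t)$ against the difference of the exact and the $\epsilon$-regularized Green kernels; this kernel is radial, decreasing and concentrated at scale $\epsilon$, so by a layer-cake estimate in the spirit of Theorem \ref{thm:structmaxest} the difference is bounded by $\Vert \omega_0 \Vert_{\mathcal M} \int_0^\infty k_\epsilon(r)\, M_r(\omega^\epsilon(t))\,dr \lesssim \epsilon^\alpha$, uniformly in $t$. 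Hence $\sup_t \Vert u^\epsilon(t) \Vert_{L^2}^2$ is bounded and in fact converges to $\Vert u_0 \Vert_{L^2}^2$ uniformly in $t$, a fact I reserve for the energy-conservation step. That each $u^\epsilon(t)$ lies in $L^2_x$, and the accompanying uniform tightness, follow from the vanishing total circulation $\sum_j \xi_j = 0$ (consistent with $\int_\dom d\omega_0 = 0$, and needed for $u^\epsilon(t) \in L^2_x$) together with $\supp(\omega^\epsilon(t)) \subset B_R(0)$: the multipole estimate \eqref{eq:asymptotic} of Remark \ref{rem:kernel} then gives $|u^\epsilon(x,t)| \le C(R, \Vert \omega_0 \Vert_{\mathcal M})\,|x|^{-2}$ for $|x| \ge 2R$, with a constant independent of $\epsilon$. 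Finally, condition (4) holds because $\omega^\epsilon(0) = \phi_\epsilon \ast \omega_\epsilon(0) \weakstarto \omega_0$, while the weak convergence $u^\epsilon(0) \weaklyto u_0$ combined with the norm convergence $\Vert u^\epsilon(0) \Vert_{L^2} \to \Vert u_0 \Vert_{L^2}$ (again from the energy condition of the good approximation) upgrades to strong convergence in the Hilbert space $L^2_x$.

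The main obstacle is the weak consistency, condition (3), together with the uniform Lipschitz continuity in $H^{-L}$ that it requires. Both follow from a Delort-type symmetrization. Testing the vorticity transport $\partial_t \omega_\epsilon + \div(\omega_\epsilon u^\epsilon) = 0$ against $\phi_\epsilon \ast \psi$, where $\psi$ is a stream function of the divergence-free test field $\phi = \nabla^\perp \psi$, and antisymmetrizing in the two vorticity variables, I would rewrite the time derivative of $\int u^\epsilon \cdot \nabla^\perp \psi$ as $\tfrac12 \int\!\int K_\epsilon(x-y)\cdot\big(v(x)-v(y)\big)\,d\omega_\epsilon(x)\,d\omega_\epsilon(y)$ with $v = \phi_\epsilon \ast \nabla \psi$. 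The crucial point is that the antisymmetry of $K_\epsilon$ turns the naively $O(\epsilon^{-1})$ singular kernel into the bounded symmetrized kernel $|K_\epsilon(x-y)\cdot(v(x)-v(y))| \le (2\pi)^{-1}\Vert \nabla v \Vert_{L^\infty}$, whence the expression is controlled by $\Vert \omega_0 \Vert_{\mathcal M}^2 \Vert \psi \Vert_{W^{2,\infty}}$ uniformly in $\epsilon$; this yields the $\Lip([0,T];H^{-L})$ bound. Establishing that the consistency error itself vanishes then requires comparing this bilinear form with the velocity-form expression $\int \nabla \phi : (u^\epsilon \otimes u^\epsilon)$ and showing that the regularization error concentrated near the diagonal $x = y$ is negligible; this is precisely where the quantitative non-concentration $M_r(\omega^\epsilon) \le C r^\alpha$ enters, controlling the contribution of the diagonal to the product measure $\omega^\epsilon \otimes \omega^\epsilon$. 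I expect this step, which I would carry out along the lines of \cite{Delort1991,LiuXin1995,Schochet1996}, to be the technically most demanding, although the strong $L^2$-compactness already secured removes the need for any compensated-compactness argument in the final passage to the limit.

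With all four conditions verified, the proof concludes quickly. Proposition \ref{prop:compactness} gives precompactness of $\{u^\epsilon\}$ in $L^2(0,T;L^2_{x,\loc})$, and the uniform tightness of the preceding paragraph upgrades a diagonal subsequence to one converging strongly in $L^2_{t,x}$ to some limit $u$. Strong $L^2$-convergence makes $u^\epsilon \otimes u^\epsilon \to u \otimes u$ in $L^1_{t,x}$, so passing to the limit in \eqref{eq:Eulerweak} --- using condition (3) for the interior and condition (4) for the initial datum --- shows that $u$ is a weak solution of \eqref{eq:Euler}. Energy conservation follows since, for almost every $t$, $\Vert u(t) \Vert_{L^2}^2 = \lim_{\epsilon} \Vert u^\epsilon(t) \Vert_{L^2}^2 = \Vert u_0 \Vert_{L^2}^2$ is independent of $t$; and the absence of energy concentration is immediate from Remark \ref{rem:concentration}, as strong $L^2$-convergence forces the concentration measure $\lambda_t$ to vanish identically.
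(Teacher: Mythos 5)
Your overall architecture matches the paper's: the structure-function bound comes from Corollary \ref{cor:algebraicdecay}, strong compactness from Proposition \ref{prop:compactness}, and energy conservation from the conservation of the discrete Hamiltonian $\mathcal{E}^\epsilon$ combined with an $O(\epsilon^\alpha)$ comparison between $E^\epsilon(t)=\Vert u^\epsilon(t)\Vert_{L^2}^2$ and $\mathcal{E}^\epsilon(t)$ controlled by the vorticity maximal function (this is exactly the paper's Lemma \ref{lem:Ealong} and Proposition \ref{prop:energyerror}, which rest on Lemma \ref{lem:psimax}). The one genuine divergence is how the weak limit is identified as a weak solution of \eqref{eq:Euler}. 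The paper does not verify Definition \ref{def:approxsol} from scratch: it simply invokes the result of Liu and Xin \cite{LiuXin1995}, which already gives $u^\epsilon\weaklyto u$ with $u$ a weak solution (under a weaker decay hypothesis on $M_r$), and then upgrades weak to strong convergence via the compactness argument. You instead propose to establish the weak consistency (condition (3)) directly by a Delort--Schochet symmetrization of the Biot--Savart kernel, and to pass to the limit yourself using the strong convergence. That route is viable and, as you note, the strong compactness you secure first means you never need compensated compactness; but the symmetrization step --- in particular the comparison between the bilinear form in $\omega_\epsilon\otimes\omega_\epsilon$ and $\int\nabla\phi:(u^\epsilon\otimes u^\epsilon)$, and the control of the near-diagonal regularization error --- is left entirely at the level of a plan. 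As written this is the one real gap in your argument: you should either carry out that estimate in detail or do what the paper does and cite \cite{LiuXin1995} for the weak convergence to a weak solution, which makes the whole of your third paragraph unnecessary.

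Two minor remarks. First, your verification of condition (1) via the Hamiltonian is more than is needed for compactness (the paper only uses that machinery for the energy-conservation step), but it is not wrong, and your observation that $\sum_j\xi_j=0$ plus compact support gives the decay $|u^\epsilon(x,t)|\lesssim |x|^{-2}$, hence tightness and genuine $L^2_x$ membership, is exactly the point the paper makes at the start of its proof. Second, note that the paper reads the hypothesis as a bound on $M_r(\omega_\epsilon)$ (the Dirac sum) and transfers it to $\omega^\epsilon=\phi_\epsilon\ast\omega_\epsilon$ via Lemma \ref{lem:vortmoll}; you take the bound on $\omega^\epsilon$ at face value, which is what Corollary \ref{cor:algebraicdecay} actually requires, so no harm is done either way.
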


The proof of Theorem \ref{thm:strongconv} depends on several technical, but mostly straight-forward estimates on the approximate solution sequence, building on the results of Liu and Xin \cite{LiuXin1995}. In order not to disrupt the flow of this work, we have included the detailed proof in appendix \ref{app:vortexblob}. 

\begin{remark}
Theorem \ref{thm:strongconv} shows that even very weak \emph{logarithmic control} on the vorticity maximal function implies the absence of energy concentration phenomena. We remark in passing that if the sequence $\omega^\epsilon$ satisfies a uniform algebraic bound
\[
\sup_{t\in [0,T]} M_r(\omega^\epsilon(t)) \lesssim r^{\alpha},
\]
for some $\alpha > 0$, then the structure functions can be bounded by an algebraic upper bound
\[
\sup_{t\in [0,T]} S_2(u^\epsilon(t);r) \lesssim r^{\alpha/2}. 
\]
This straightforward extension is based on Corollary \ref{cor:algebraicdecay}. In fact, such an algebraic upper bound on the structure functions has been observed in numerical experiments based on spectral methods \cite{LMP2019,LMP2020}.
\end{remark}

\begin{remark}
The formulation of Theorem \ref{thm:strongconv} in terms of a uniform decay of the vorticity maximal function will be particularly convenient as a practical tool to analyse our numerical experiments, because the vortex-blob method outputs the vorticity $\omega_\epsilon$, rather than the velocity $u^\epsilon$. 
\end{remark}

\begin{remark}
In the work of Liu and Xin \cite{LiuXin1995}, a \emph{weak} convergence result (rather than strong convergence as in Theorem \ref{thm:strongconv}) was obtained under a weaker uniform decay condition on the vorticity maximal function. Their convergence result does not rule out energy concentration in the limit. As will be seen in Section \ref{sec:numerical}, numerical experiments indicate that the assumptions of Theorem \ref{thm:strongconv} are fulfilled in practice, at least for the initial data considered in that section. Under this condition, Theorem \ref{thm:strongconv} shows that the corresponding approximate solution sequence is compact in $L^2_{t,x}$, and hence no energy concentration can take place in the limit $\epsilon \to 0$. 
\end{remark}

\section{Numerical Experiments} \label{sec:numerical}

The connection between a uniform decay of the vorticity maximal function $M_r(\omega^\epsilon)$ and a uniform decay of the structure functions $S_2(u^\epsilon;r)$ has been explained in Section \ref{sec:structfun} (cp. Theorem \ref{thm:structmaxest}). As a consequence, in Corollary \ref{cor:logdecay} and Corollary \ref{cor:algebraicdecay} we have shown that suitable decay of the vorticity maximal function implies uniform decay of the structure function. Finally, according to Proposition \ref{prop:compactness}, such a uniform algebraic decay gives compactness in $L^2_{\mathrm{loc}}$. Focusing on the case of algebraic decay, we can thus summarize our results schematically as follows:
\begin{align} \label{eq:implications}
M_r(\omega^\epsilon(t)) \le Cr^\alpha
\; \Rightarrow \;
S_2(u^\epsilon(t);r) \le Cr^{\alpha/2}
\; \Rightarrow \;
\text{ $u^\epsilon$ precompact in $L^2_tL^2_{x,\mathrm{loc}}$.}
\end{align}
The first two bounds are understood to hold uniformly for all $t\in [0,T]$. As explained in remark \ref{rem:concentration}, a lack of strong $L^2_{\mathrm{loc}}$-compactness in approximate solution sequences $u^\epsilon$ with vortex sheet initial data can only arise due to concentration effects in the limit $\epsilon \to 0$. In particular, considering the implications of \eqref{eq:implications} in contraposition, any concentration in the limit $\epsilon \to 0$ would rule out a uniform bound of the form $M_r(\omega^\epsilon) \le Cr^\alpha$. 

The central question of this work is whether there is any numerical evidence that such concentration phenomena appear in the evolution of vortex-sheets. To shed new light on this question, the present section focuses on numerical experiments conducted with the vortex-blob method reviewed in the previous Section \ref{sec:vortexblob}, for which the analogue of the implications \eqref{eq:implications} have been stated in Theorem \ref{thm:strongconv}. In particular, if the regularized evolution of vortex sheets by the vortex-blob method (as studied in \cite{Krasny1987}) does indeed exhibit concentrations in the limit $\epsilon \to 0$, then our numerically obtained solutions cannot satisfy any uniform bound of the form $M_r(\omega^\epsilon) \le Cr^\alpha$. This motivates our detailed study of the behaviour of the numerically determined maximal vorticity functions $M_r(\omega^\epsilon(t))$ presented in the remainder of this work.

\subsection{Setup} \label{sec:numerical:setup}

We consider two types of initial data, which we will refer to as the ``loaded wing'' and the ``fuselage flap'' configurations, following the terminology of \cite{Krasny1987}. In both cases, the initial vorticity data $\omega_0$ is given by a flat vortex sheet along the straight curve from $(-1,0)$ to $(1,0)$ in the plane $\R^2$. As in \cite{Krasny1987}, we parametrize the initial vortex sheet by a curve $z_0: [0,\pi] \to \R^2$, $z_0(\alpha) := (-\cos(\alpha),0)$. The regularized vortex sheet dynamics with blob-size $\epsilon>0$ then leads to the following evolution equation for this parametrization in time
\begin{align} \label{eq:birkhoffrott}
\partial_t z(\alpha,t) := \int_0^\pi K_\epsilon(z(\alpha,t)-z(\tilde{\alpha},t)) \Gamma'(\tilde{\alpha}) \, d\tilde{\alpha}.
\end{align}
Here $\Gamma' = d\Gamma/d\alpha$ and $\alpha \mapsto \Gamma(\alpha)$ measures the circulation along the vortex sheet. The circulation is specified at the initial time, so that the integration of any smooth test function $\phi \in C^\infty(\R^2)$ against the measure $\omega_0 \in \mathcal{M}$ can be written in the form
\[
\int_\D \phi(x) \, d \omega_0(x) = \int_0^\pi \phi(z_0(\alpha)) \Gamma'(\alpha) \, d\alpha.
\]
For the loaded wing configuration, we have $\Gamma = \Gamma_{\mathrm{lw}}$, where
\begin{align} \label{eq:lw}
\Gamma_{\mathrm{lw}}(\alpha) = \sin(\alpha).
\end{align}
For the fuselage flap configuration, we define $\Gamma = \Gamma_{\mathrm{ff}}$, where 
\begin{align} \label{eq:ff}
\Gamma_{\mathrm{ff}}(\alpha) 
=
\begin{cases}
\sin(\alpha), & |\cos(\alpha)|>0.7, \\
P(\cos(\alpha)), &  |\cos(\alpha)| \le 0.7, 
\end{cases}
\end{align}
Here, $P: [-0.7,0.7] \to \mathbb{R}$, $x\mapsto P(x)$ is a cubic spline on the intervals between $0,\pm 0.3, \pm 0.7$, chosen such that $\alpha \mapsto \Gamma(\alpha), \,\Gamma'(\alpha)$ are continuous (providing spline boundary conditions) and enforcing that $\Gamma(\cos^{-1}(\pm 0.3)) = 2$, $\Gamma(\cos^{-1}(0)) = 1.4$, where $\cos^{-1}$ is the inverse of $\cos: [0,\pi] \to [-1,1]$. 
In practice, $P(x)$ has been determined as follows: The cubic spline interpolation conditions are
\[
P(\pm 0.7) = \sqrt{1-(0.7)^2}, \quad P(\pm 0.3) = 2, \quad P(0) = 1.4,
\]
In addition, we enforce the spline boundary conditions
\[
P'(x_0) = -x_\pm/\sqrt{1-x_\pm^2}, \quad \text{at } x_\pm=\pm 0.7.
\]
This leads to a piece-wise polynomial function $P(x)$ of the form
\[
P(x) = a_0 + a_1 |x| + a_2 |x|^2 + a_3 |x|^3,
\]
on each subinterval. For $|x|\in [0,0.3]$ and $|x|\in [0.3,0.7]$, we find, respectively,
\begin{gather*}
\left\{
\begin{aligned}
a_0 &= 1.4, \\ 
a_1 &= 0.0,\\
a_2 &= 20.0,\\
a_3 &= -44.444444444444443,
\end{aligned}
\right.
\quad
\left\{
\begin{aligned}
a_0 &= -0.868873730864876,\\
a_1 &= 22.190937843818809,\\
a_2 &= -52.310461263296190,\\
a_3 &= 34.056810793181128.
\end{aligned}
\right.
\end{gather*}
The resulting circulation $\Gamma$, as a function of the horizontal variable $x = -\cos(\alpha)$ is shown in Figure \ref{fig:initialdata} (A), for both the loaded wing (dashed line) and the fuselage flap (solid line) configurations. Also shown in Figure \ref{fig:initialdata} (B) is the vortex sheet strength $\sigma = -d\Gamma/ds$ where $s$ is the arclength. Note that the vortex sheet strength measures the jump of the velocity corresponding to the vortex sheet \cite{Krasny1987}.

\begin{figure}[H]
\centering
\begin{subfigure}{.45\textwidth}
\includegraphics[width=\textwidth]{{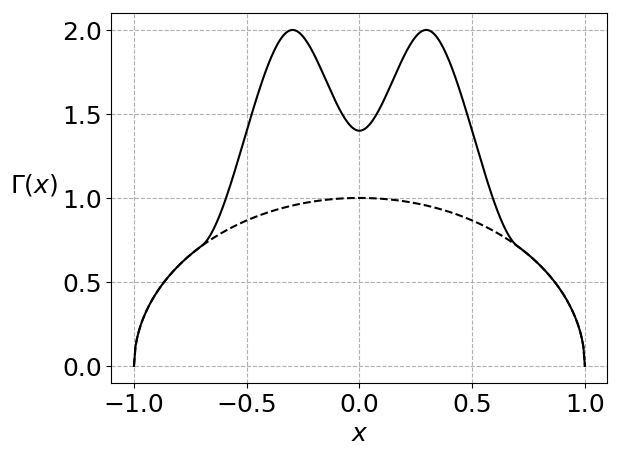}}
\caption{Circulation $\Gamma$}
\end{subfigure}
\hspace{5pt}
\begin{subfigure}{.45\textwidth}
\includegraphics[width=\textwidth]{{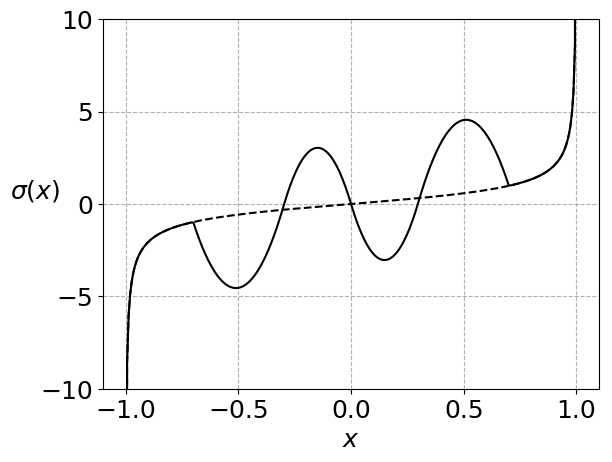}}
\caption{Vortex sheet strength $\sigma = -{d\Gamma}/{ds}$}
\end{subfigure}
\caption{Illustration of the initial circulation (left) and vortex sheet strength (right) for the loaded wing (dashed line) and fuselage flap (solid line) configurations considered in this section (cp. with \cite[Figure 1]{Krasny1987}).}
\label{fig:initialdata}
\end{figure}

Let $\alpha_j := \pi j/N$ for $j=0,\dots,N$. Employing the vortex-blob method, we approximate the right-hand side of the parametrized vortex sheet evolution equation \eqref{eq:birkhoffrott} by the trapezoidal rule
\[
\int_0^\pi K_\epsilon(z(\alpha,t)-z(\tilde{\alpha},t)) \Gamma'(\tilde{\alpha}) \, d\tilde{\alpha}
\approx
\sum_{j=0}^N K_\epsilon(z(\alpha,t)-z(\alpha_j,t)) w_j,
\]
where $w_j = \Gamma'(\alpha_j)\pi/N$ for $j=1,\dots, N-1$ and $w_j =  \Gamma'(\alpha_j)\pi/(2N)$ for $j=0,N$. With $z_j(t) \approx z(\alpha_j,t)$, this leads to the vortex-blob evolution equation
\[
\frac{d}{dt} z_k(t) = \sum_{j=0}^N K_\epsilon(z_k(t)-z_j(t)) w_j.
\]
with initial data $z_j(t=0) = -\cos(\alpha_j)$ \cite{Krasny1987}.

In our numerical implementation, the system of ordinary differential equations for the $z_j(t)$ are solved using RK4 time-stepping, with a fixed prescribed time-step $\Delta t$. The right-hand side of the evolution equations are computed using a simple, direct evaluation of the sum (\texttt{openMP} parallelized). In particular, we have not made use of the ``point-insertion technique'' employed in \cite{Krasny1987}, nor of a potentially more efficient $O(N\, \log N)$ tree-based evaluation of the right-hand side terms \cite{Draghicescu1995}. The accuracy of the time-stepping has been measured by tracing the constants of motion of the vortex-blob method
\[
H^\epsilon = -\frac1{4\pi} \sum_{i,j=0}^N w_iw_j\log\left(|z_i-z_j|^2 + \epsilon^2\right), \quad W = \sum_{j=0}^N w_j z_j,
\]
over time; the time-step is chosen small enough to ensure conservation of these quantities, with a relative error $\le 5\cdot 10^{-4}$ over the simulation.

For both cases, we will consider the decay of the vorticity maximal function over time, i.e. we will track the time-dependent functions
\[
r \mapsto M_r(\omega_\epsilon(t)) := \sup_{z\in \dom} \sum_{|z_j(t) - z| \le r} |w_j|.
\]
As explained in Section \ref{sec:vortexblob}, a uniform algebraic decay of $M_r(\omega_\epsilon(t))$ (uniform in $\epsilon \to 0$ and $t \in [0,T]$) implies a lack of concentration in the limit $\epsilon \to 0$, and strong convergence of the approximate velocity fields $u^\epsilon \to u$ to an energy-preserving weak solution $u$ of the incompressible Euler equations. We propose two numerical tests to track the evolution of $M_r(\omega_\epsilon(t))$ numerically: a global approach and a local approach.

\subsubsection{Global tracking of vorticity maximal function:} \label{sec:global}

The global evaluation provides an approximation of $M_r(\omega_\epsilon(t))$. This approximation is obtained as follows: Given a time $t \in [0,T]$ and discrete vorticity data $w_j$, $z_j(t)$ obtained by the vortex blob method with vorticity
\[
\omega_\epsilon(t)(z) = \sum_{j=0}^N w_j \dirac(z - z_j(t)),
\]
we first fix a square grid of a total length size $L$, centered at a point $(x_c,y_c) \in \R^2$, such that all vortices are contained in the square, i.e.
\[
z_j(t) \in [x_c-L,x_c+L] \times [y_c-L,y_c+L] \quad \forall \, j=0,\dots, N.
\]
In practice, we first set $x_c = (x_{\mathrm{min}} + x_{\mathrm{max}})/2$, $y_c = (y_{\mathrm{min}} + y_{\mathrm{max}})/2$, where $x_{\mathrm{min},\mathrm{max}}$ are the minimum/maximum of all x-coordinates of the vortex positions $z_j(t)$, and similarly for $y$. With this choice of the centre $(x_c,y_c)$, the length $L$ is then chosen minimal to fit the entire distribution in the square box surrounding $(x_c,y_c)$. Let $(x_0,y_0)=(x_c-L,y_c-L)$ denote the lower left corner of the square. Given a (large) subdivision number $N_d$, we then subdivide the square into a regular $N_d\times N_d$-grid 
\[
\mathcal{G} = \{g_{k,\ell} \, | \, k,\ell=0,\dots,N_d\}, \quad g_{k,\ell} := (x_k,y_\ell), \quad (x_k,y_\ell) = (x_0,y_0) + \frac{2L}{N_d}(k,\ell).
\]
This yields a partition into small squares $Q_{k,\ell} := [x_{k},x_{k+1}) \times [y_{\ell},y_{\ell+1})$ with side length $2L/N_d$. To obtain an approximation of the maximal vorticity at length scale $r_{\mathrm{min}} = 2L/N_d$, we bin the vortices in the square boxes $Q_{k,\ell}$ by defining
\[
m^{(0)}_{k,\ell} := \sum_{z_j(t) \in Q_{k,\ell}} |w_j|, \quad \text{and approximate} \quad M_{r_\mathrm{min}}(\omega_\epsilon(t)) \approx \max_{k,\ell} m^{(0)}_{k,\ell}.
\]
To obtain an approximation of the vorticity at larger scales $2^n r_{\mathrm{min}}$ for $n > 0$, we define
\[
m^{(n)}_{k,\ell} := \sum_{|k'-k|<2^n}\sum_{|\ell'-\ell|<2^n} m^{(0)}_{k+k',\ell+\ell'},
\]
and approximate $M_{2^n r_\mathrm{min}}(\omega_\epsilon(t)) \approx \max_{k,\ell} m^{(n)}_{k,\ell}$. To analyse the output of our simulations, we have chosen $N_d = 4096$, corresponding to $4096^2$ bins and $r_{\mathrm{min}} \approx 5\cdot 10^{-4}$.

\subsubsection{Local tracking of vorticity maximal function:} \label{sec:local}

As another quantity of interest, we propose to track the evolution of the vorticity maximal function at local points along the vortex sheet. To this end, we recall that the numerical scheme outlined in Section \ref{sec:numerical:setup} provides an approximation $z_j = z(\alpha_j,t)$ of the evolution of the parametrization $\alpha \mapsto z(\alpha,t)$ of the vortex sheet. For the local tracking, given $\alpha \in [0,\pi]$, we find the discretized point $\alpha_j \approx \alpha$ closest to $\alpha$, and track the evolution of $z_j(t) \approx z(\alpha,t)$ over time. At any given time $t \in [0,T]$, we can then compute 
\[
r \mapsto \sum_{|z_i(t)-z_j(t)| \le r} |w_j|. 
\]
This provides a local measure of the evolution of the vorticity concentration at individual points of interest along the vortex sheet, providing further detailed insight into the behaviour of the vorticity at a local level.

\begin{figure}[H]
\centering
\begin{subfigure}{.45\textwidth}
\includegraphics[width=\textwidth]{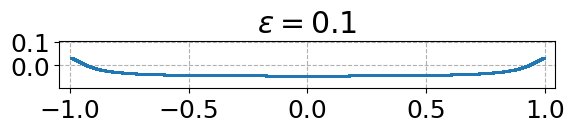}
\caption{$t = 0.1$}
\end{subfigure}
\begin{subfigure}{.45\textwidth}
\includegraphics[width=\textwidth]{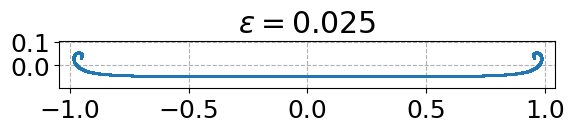}
\caption{$t = 0.1$}
\end{subfigure} 
\\[5pt]
\begin{subfigure}{.45\textwidth}
\includegraphics[width=\textwidth]{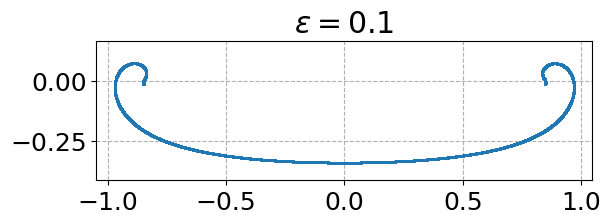}
\caption{$t = 0.8$}
\end{subfigure}
\begin{subfigure}{.45\textwidth}
\includegraphics[width=\textwidth]{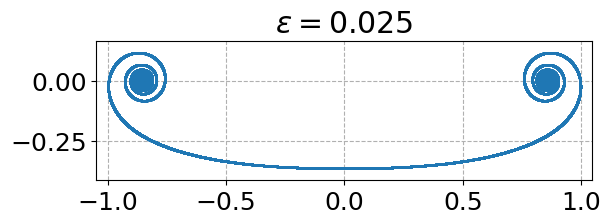}
\caption{$t = 0.8$}
\end{subfigure} 
\\[5pt]
\begin{subfigure}{.45\textwidth}
\includegraphics[width=\textwidth]{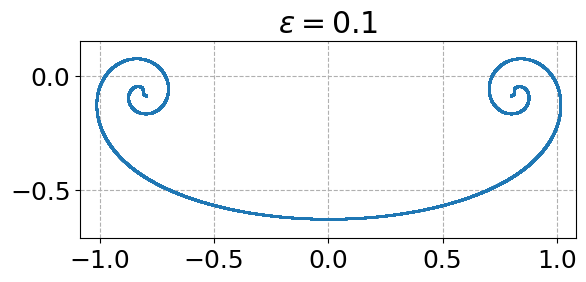}
\caption{$t = 1.6$}
\end{subfigure}
\begin{subfigure}{.45\textwidth}
\includegraphics[width=\textwidth]{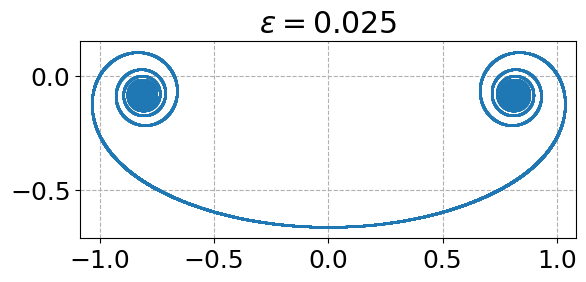}
\caption{$t = 1.6$}
\end{subfigure} 
\caption{Loaded-wing configuration: Evolution of vortex sheet with regularization parameter $\epsilon = 0.1$ (left) and $\epsilon = 0.025$ (right) for $t = 0.1$, $0.8$, $1.6$. Computed with the vortex-blob scheme using $N=100'001$ vortices.}
\label{fig:evolution_lw}
\end{figure}

\subsection{Loaded wing configuration} \label{sec:loadedwing}

As has been demonstrated in computations first performed by Krasny \cite{Krasny1987}, the loaded-wing configuration exhibits much more regular behaviour than the fuselage flap configuration. The loaded-wing configuration exhibits a simple (approximately self-similar) vortex sheet roll-up at the vortex tips. This behaviour appears to be qualitatively very similar to the dynamics that has been observed for a single signed periodic vortex sheet \cite{Krasny1986,Krasny1990}. Before considering the fuselage flap configuration in the next section, we here present the results of our simulations for the loaded-wing case, which will serve as a reference case for which concentration phenomena are not expected to occur. We note that this loaded wing initial data is of the mirror symmetric form for which the existence of a weak solution has been established in \cite{LNZ2001}.

Using the vortex-blob method with initialization described in detail in the last section, we have computed the evolution of the vortex sheet up to time $t=1.6$, at different values of the regularisation parameter $\epsilon = 0.1$, $0.05$ and $0.025$. For each regularisation, we initialize the vortex sheet with a total of $N=100'001$ discrete vortices. A constant time-step of $\Delta t = 0.005$ has been chosen for these simulations. Representative plots of the vortex sheet evolution with different regularization parameters $\epsilon = 0.1$, $0.025$ are shown in Figure \ref{fig:evolution_lw}.

Figure \ref{fig:evolution_lw} reproduces the evolution depicted in \cite[Figure 2]{Krasny1987}. We emphasize that these plots do not show an interpolated curve through the discrete vortices; rather, each vortex is represented as a single dot. The appearance of a continuous curve is due to the density of vortex points. Next, we numerically compute the evolution of the discrete vorticity maximal function $M_r(\omega_\epsilon(t))$ for different values of $r$ and at different times $t$, using the global numerical method explained in Section \ref{sec:global}. The results at times $t=0.1$, $0.4$, $0.8$ and $1.6$ are shown in Figure \ref{fig:vortmax_lw}.

\begin{figure}[H]
\centering
\begin{subfigure}{.45\textwidth}
\includegraphics[width=\textwidth]{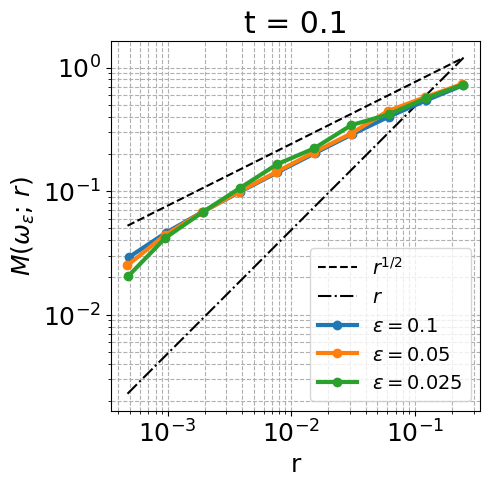}
\caption{$t = 0.1$}
\end{subfigure}
\begin{subfigure}{.45\textwidth}
\includegraphics[width=\textwidth]{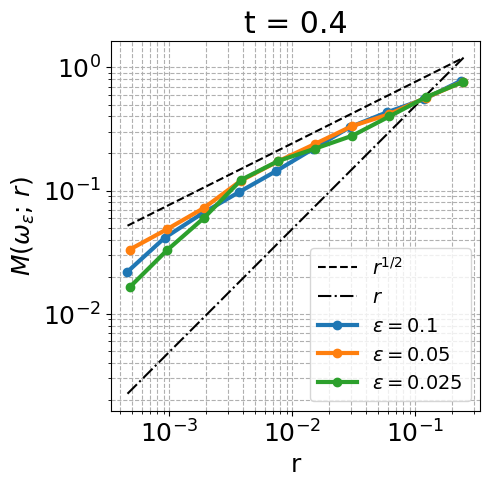}
\caption{$t = 0.4$}
\end{subfigure} 
\\[5pt]
\begin{subfigure}{.45\textwidth}
\includegraphics[width=\textwidth]{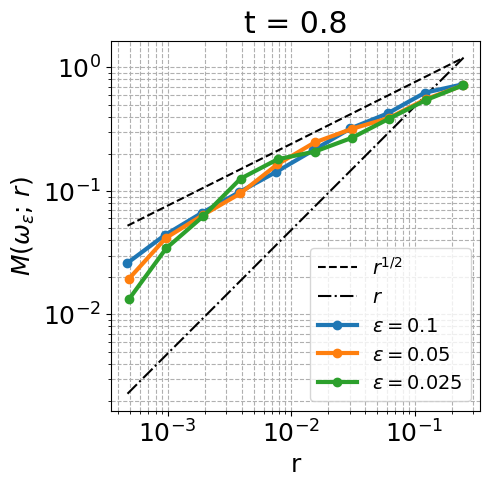}
\caption{$t = 0.8$}
\end{subfigure}
\begin{subfigure}{.45\textwidth}
\includegraphics[width=\textwidth]{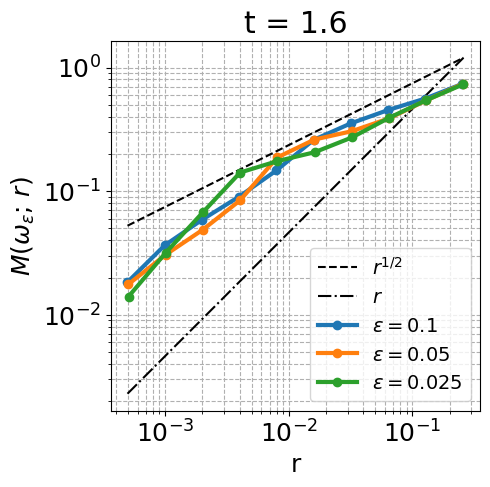}
\caption{$t = 1.6$}
\end{subfigure} 
\caption{Loaded-wing configuration: Evolution of vorticity maximal function $M_r(\omega_\epsilon(t))$ with regularization parameter $\epsilon \in \{ 0.025,0.05,0.1 \}$ at $t = 0.1$, $0.4$, $0.8$ and $1.6$. Computed with the vortex-blob scheme using $N=100'001$ vortices.}
\label{fig:vortmax_lw}
\end{figure}

Figure \ref{fig:vortmax_lw} shows an approximate scaling of the vorticity maximal function of $M_r \sim r^{1/2}$, which is to be expected due to the $1/\sqrt{1-x^2}$ singularities of the vortex sheet strength at the vortex sheet tips. More precisely, we can explicitly compute the vorticity maximal function for the initial data (e.g. centered at the right tip $z = (1,0)$):
\[
\int_{B_r(z)} d |\omega(y)|
=
\int_{1-r}^1 \frac{dx}{\sqrt{1-x^2}}
=
\int_0^r \frac{du}{\sqrt{2u}} + O(r)
=
\frac{1}{2\sqrt{2}}\sqrt{r} + O(r).
\]
Figure \ref{fig:vortmax_lw} shows that this initial scaling is approximately conserved over time, suggesting an algebraic bound for the vorticity maximal function also at later times for the loaded wing configuration. 

We can further confirm this observation by locally tracking the vorticity concentration at individual points. For the loaded wing configuration, the vorticity is clearly expected to be maximal near the wing tips. We show the evolution of the local vorticity maximal function (cp. Section \ref{sec:local}) at points $\alpha = 0.9\pi$, $0.965\pi$ and $\pi$ in Figures \ref{fig:localtracking_lw} and \ref{fig:vortmaxlocal_lw}.

\begin{figure}[H]
\centering
\begin{subfigure}{.49\textwidth}
\includegraphics[width=\textwidth]{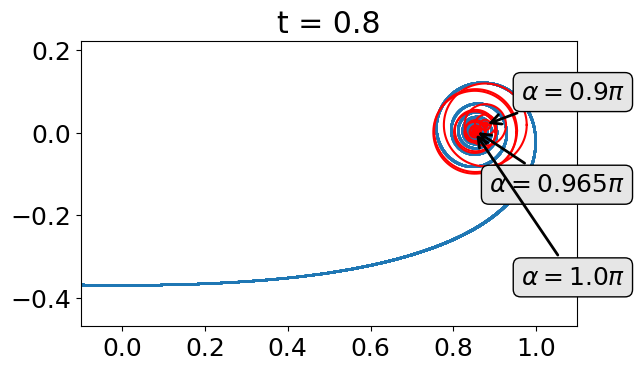}
\caption{$t = 0.8$}
\end{subfigure} 
\begin{subfigure}{.49\textwidth}
\includegraphics[width=.95\textwidth]{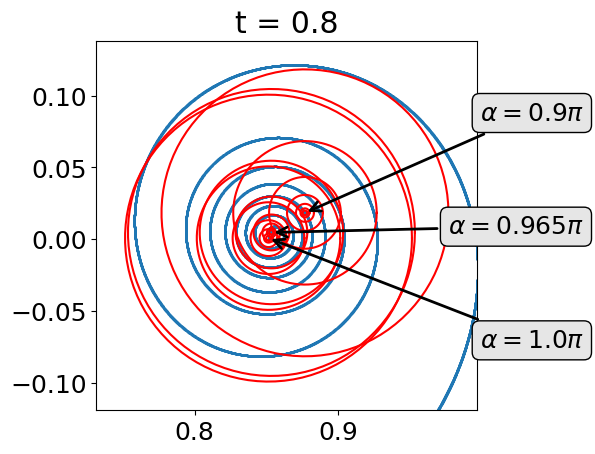}
\caption{close-up view}
\end{subfigure}
\\[5pt]
\begin{subfigure}{.49\textwidth}
\includegraphics[width=\textwidth]{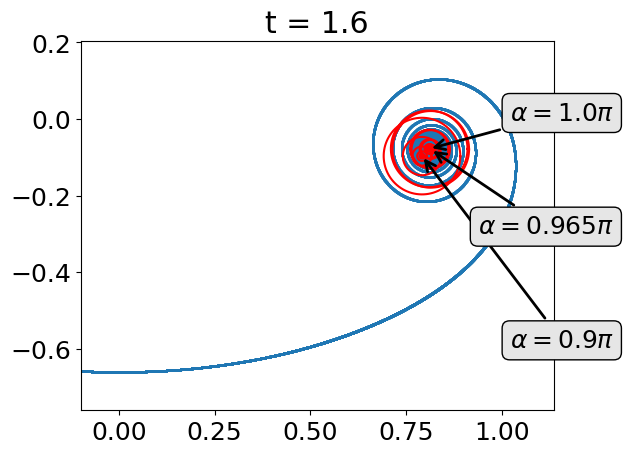}
\caption{$t=1.6$}
\end{subfigure} 
\begin{subfigure}{.49\textwidth}
\includegraphics[width=\textwidth]{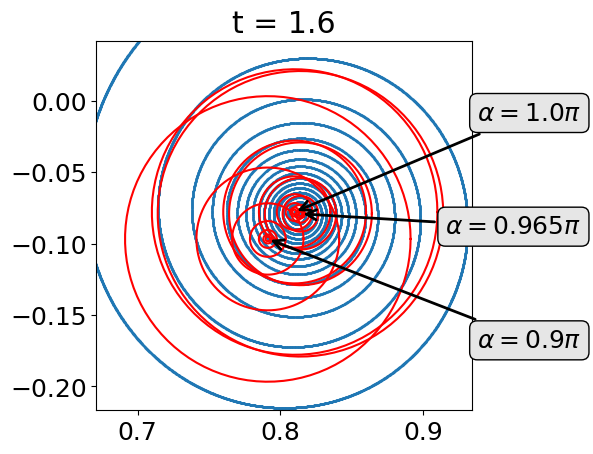}
\caption{close-up view}
\end{subfigure}
\caption{Loaded-wing configuration: Tracking of individual points along the vortex sheet with parameter-values $\alpha = 0.9\pi$, $0.965\pi$, $\pi$ at times $t=0.8$ and $t=1.6$ with regularization $\epsilon = 0.025$ and $N=100'001$ vortices. Red circles indicate the radial values for which the local vorticity maximal function have been evaluated. (left) global view of relative positions, (right) close-up view of positions}
\label{fig:localtracking_lw}
\end{figure}

The results of the local tracking presented in Figure \ref{fig:vortmaxlocal_lw} shows that the \emph{local} vorticity maximal function may experience larger fluctuations over time. As seen in Figure \ref{fig:vortmaxlocal_lw} (left column), the {local} vorticity maximal function can deteriorate at larger scales $r \approx r_{\mathrm{max}}$. This is explained as follows: The point at $\alpha = 0.9\pi$ is initially well-separated from the vortex sheet tips and hence initially displays a vorticity decay $\int_{B_r(z)} d|\omega_0| \lesssim r$, rather than the worse $r^{1/2}$ decay at the tips. At later times, the vortex sheet rolls up, bringing the point at $\alpha = 0.9\pi$ closer in the vicinity of the vortex sheet tip, so that the sudden increase in the local vorticity maximal function stems from the fact that the vortex sheet tip becomes ``visible'' to the local vorticity maximal function from that point on. In contrast, a local concentration (at small scales $r$) is not visible for any of the tracked points. In particular, the points near the vortex sheet tip ($\alpha = 0.965\pi$ and $\alpha = 1$) exhibit a scaling of order $\sim r^{1/2}$, uniform in time and for all values of $\epsilon$ considered (cp. Figure \ref{fig:vortmaxlocal_lw} central and right column).

Thus, the quantitative analysis of the simulation for the loaded wing configuration suggests a uniform decay of the vorticity maximal function, and hence strong convergence to a limiting energy-conservative weak solution.

\begin{figure}[H]
\centering
\begin{subfigure}{.32\textwidth}
\includegraphics[width=\textwidth]{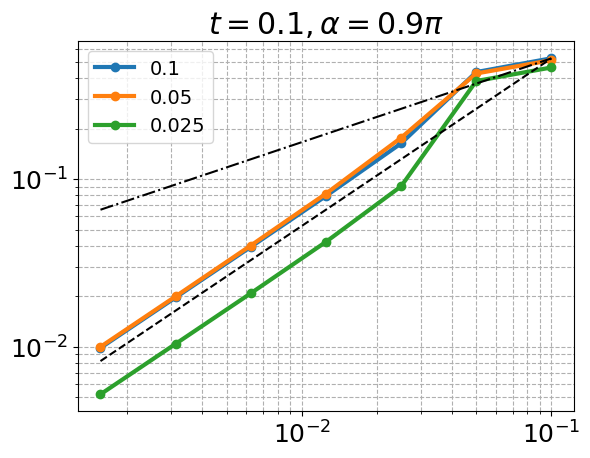}
\caption{$\alpha = 0.9\pi$}
\end{subfigure} 
\begin{subfigure}{.32\textwidth}
\includegraphics[width=\textwidth]{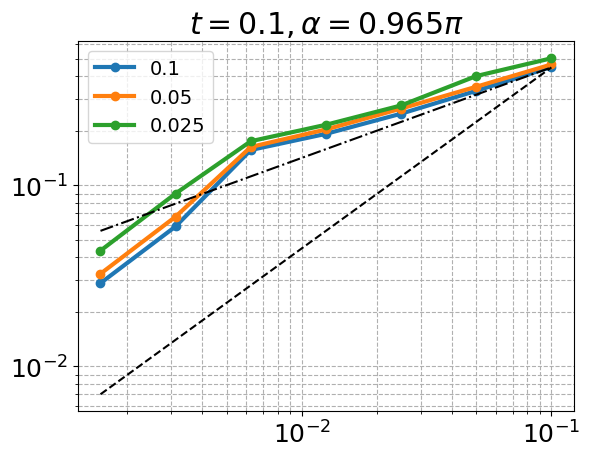}
\caption{$\alpha = 0.965\pi$}
\end{subfigure}
\begin{subfigure}{.32\textwidth}
\includegraphics[width=\textwidth]{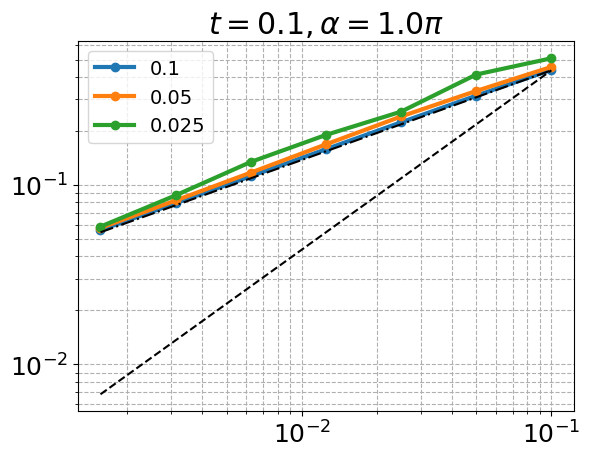}
\caption{$\alpha = \pi$}
\end{subfigure} 
\\[5pt]
\begin{subfigure}{.32\textwidth}
\includegraphics[width=\textwidth]{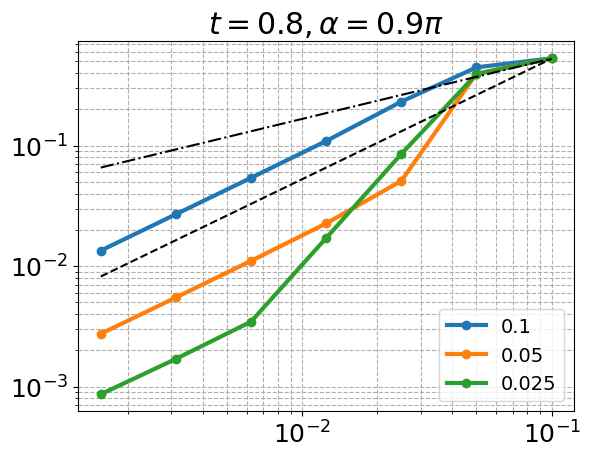}
\caption{$\alpha = 0.9\pi$}
\end{subfigure} 
\begin{subfigure}{.32\textwidth}
\includegraphics[width=\textwidth]{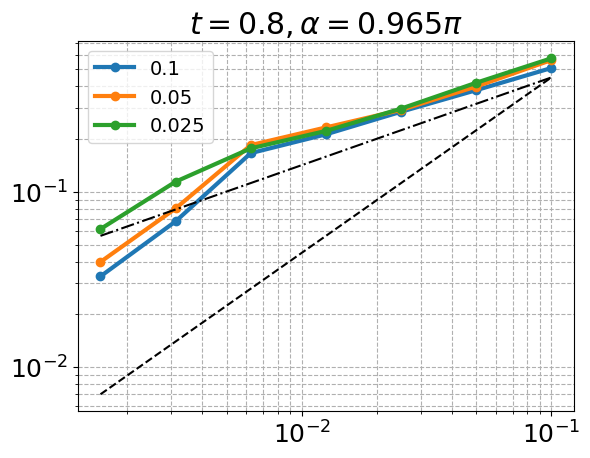}
\caption{$\alpha = 0.965\pi$}
\end{subfigure}
\begin{subfigure}{.32\textwidth}
\includegraphics[width=\textwidth]{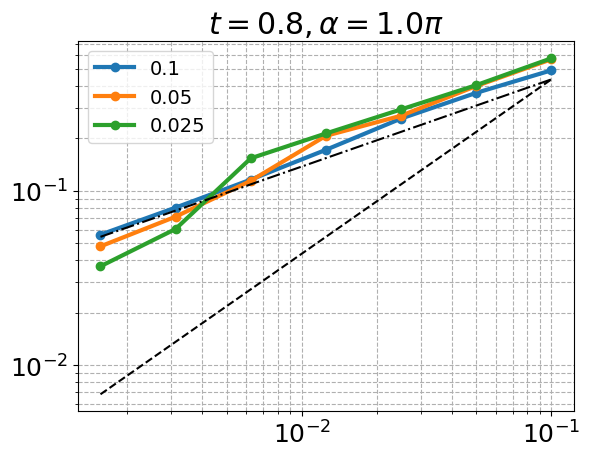}
\caption{$\alpha = \pi$}
\end{subfigure} 
\\[5pt]
\begin{subfigure}{.32\textwidth}
\includegraphics[width=\textwidth]{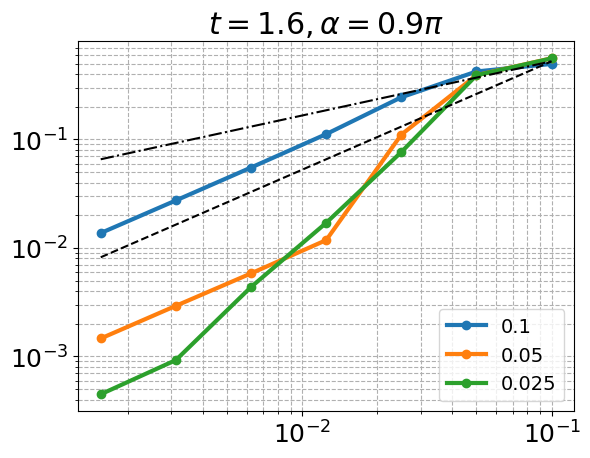}
\caption{$\alpha = 0.9\pi$}
\end{subfigure} 
\begin{subfigure}{.32\textwidth}
\includegraphics[width=\textwidth]{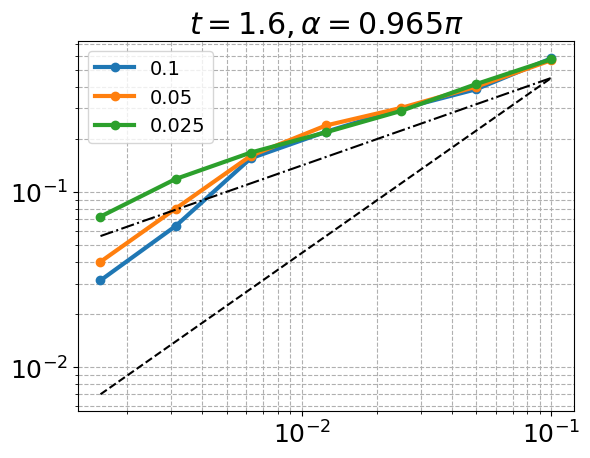}
\caption{$\alpha = 0.965\pi$}
\end{subfigure}
\begin{subfigure}{.32\textwidth}
\includegraphics[width=\textwidth]{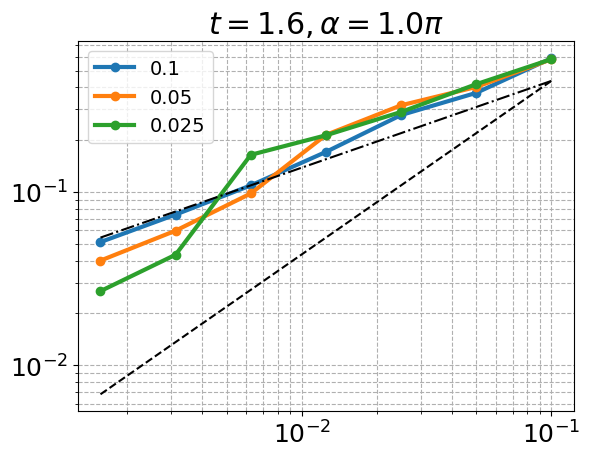}
\caption{$\alpha = \pi$}
\end{subfigure} 
\caption{Loaded-wing configuration: Evaluation of the local vorticity maximal function as a function of radius $r$, at parameter values $\alpha = 0.9\pi$ (left), $0.965\pi$ (center) and $\pi$ (right). Evaluated for regularization parameters $\epsilon = 0.1$, $0.05$ and $0.025$, and at times $t = 0.1$ (top row), $t=0.8$ (middle row), $t= 1.6$ (bottom row). Also indicated are algebraic decay of order $r^{1/2}$ (black dashed line) and $r$ (black solid line).}
\label{fig:vortmaxlocal_lw}
\end{figure}

\subsection{Fuselage flap configuration} 

After our discussion of the elliptically loaded wing, we repeat our numerical experiments for the fuselage flap configuration, described in detail in Section \ref{sec:numerical:setup}. We have computed the evolution of the vortex sheet up to time $t=8.0$, using different values of the regularisation parameter $\epsilon = 0.1$, $0.05$ and $0.025$. For each regularisation, we again initialize the vortex sheet with a total of $N=100'001$ discrete vortices. A constant time-step of $\Delta t = 0.005$ has been chosen for these simulations. For this configuration, we can qualitatively distinguish between an early evolution ($t\le 1.0$), and a later stage ($t \ge 2$). The quantitative analysis of our simulation results will thus be carried out first for early times (qualitative similarity with loaded wing roll-up) and then for later times (complicated interaction of negative and positive vortex spirals). 

\subsubsection{Early evolution}

We first focus on the qualitative evolution of the vortex sheet for early times. Representative plots of the vortex sheet evolution with different regularization parameters $\epsilon = 0.1$, $0.025$ are shown in Figure \ref{fig:earlyevo}.

\begin{figure}[H]
\centering
\begin{subfigure}{.45\textwidth}
\includegraphics[width=\textwidth]{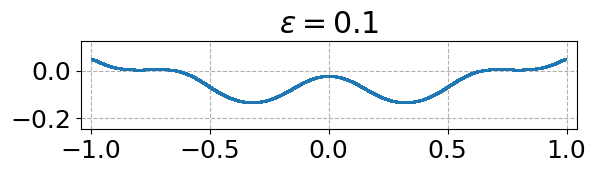}
\caption{$t = 0.1$}
\end{subfigure}
\begin{subfigure}{.45\textwidth}
\includegraphics[width=\textwidth]{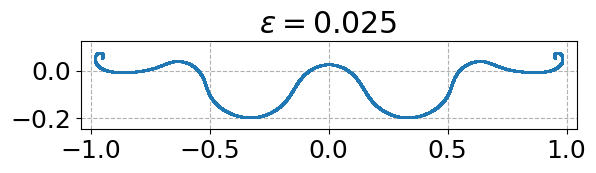}
\caption{$t = 0.1$}
\end{subfigure} 
\\[5pt]
\begin{subfigure}{.45\textwidth}
\includegraphics[width=\textwidth]{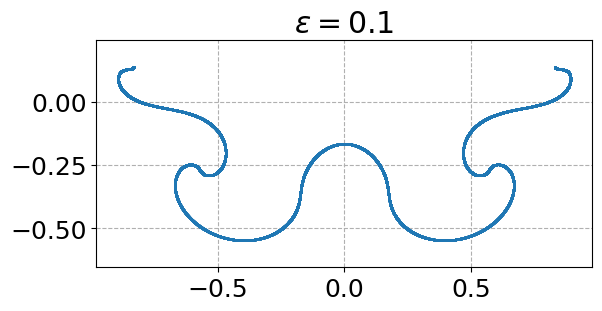}
\caption{$t = 0.5$}
\end{subfigure}
\begin{subfigure}{.45\textwidth}
\includegraphics[width=\textwidth]{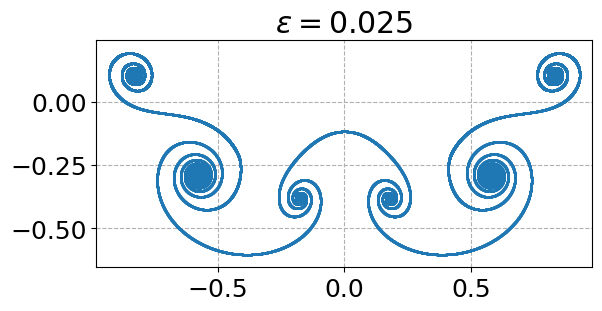}
\caption{$t = 0.5$}
\end{subfigure} 
\\[5pt]
\begin{subfigure}{.45\textwidth}
\includegraphics[width=\textwidth]{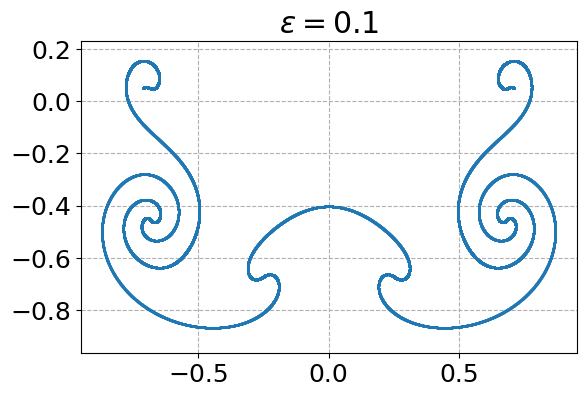}
\caption{$t = 1.0$}
\end{subfigure}
\begin{subfigure}{.45\textwidth}
\includegraphics[width=\textwidth]{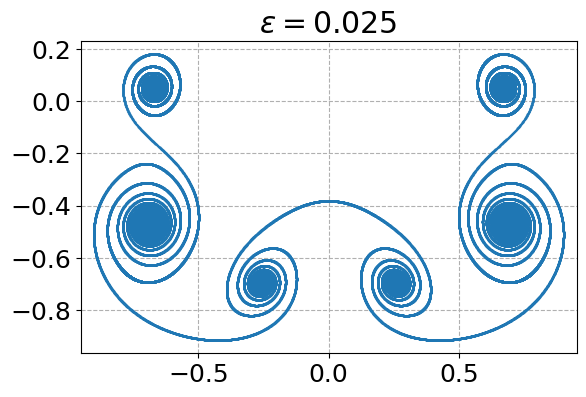}
\caption{$t = 1.0$}
\end{subfigure} 
\caption{Fuselage flap configuration: Evolution of vortex sheet with regularization parameter $\epsilon = 0.1$ (left) and $\epsilon = 0.025$ (right) for $t = 0.1$, $0.5$, $1.0$. Computed with the vortex-blob scheme using $N=100'001$ vortices.}
\label{fig:earlyevo}
\end{figure}

Figure \ref{fig:earlyevo} reproduces the evolution depicted in \cite[Figure 17]{Krasny1987}. Again, in these plots, each vortex is represented as a single dot. The appearance of a continuous curve is due to the density of vortex points. As is clearly visible in Figure \ref{fig:earlyevo}, a smaller value of the regularization parameter leads to a stronger roll-up of the vortex sheet. Comparing Figure \ref{fig:evolution_lw} (loaded wing) and Figure \ref{fig:earlyevo} (fuselage flap), the roll-up in spirals appears visually to be quite similar, so that we expect to find a similar decay of the vorticity maximal function for both the fuselage flap and loaded wing configurations, at these early times.

The numerically computed evolution of the discrete vorticity maximal function $M_r(\omega_\epsilon(t))$ for different values of $r$ and at different times $t$ is shown in Figure \ref{fig:vortearly}, using the global binning approach explained in Section \ref{sec:global}. The results at early times $t=0$, $0.1$, $0.5$ and $1.0$ are shown in Figure \ref{fig:vortearly}.

As expected, the results presented in Figure \ref{fig:vortearly} show that the vorticity maximal function exhibits a very similar uniform decay of the vorticity maximal function at early times in the evolution of the fuselage flap configuration, compared with the loaded wing configuration. With an initial decay $\sim r^{1/2}$ at $t=0$, this decay persists up to time $t=1$. As evidenced by Figure \ref{fig:vortearly}, this decay is also uniform over the considered regularization parameters $\epsilon = 0.1$, $0.05$, $0.025$.

\begin{figure}[H]
\centering
\begin{subfigure}{.45\textwidth}
\includegraphics[width=\textwidth]{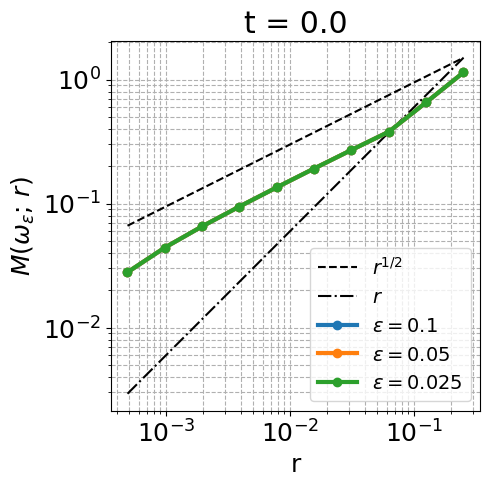}
\caption{$t = 0.0$}
\end{subfigure}
\begin{subfigure}{.45\textwidth}
\includegraphics[width=\textwidth]{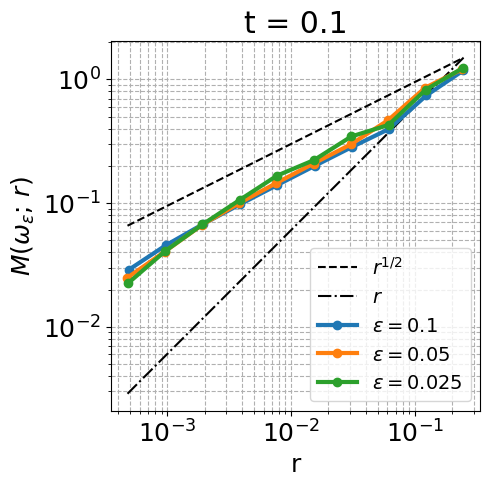}
\caption{$t = 0.1$}
\end{subfigure} 
\\[5pt]
\begin{subfigure}{.45\textwidth}
\includegraphics[width=\textwidth]{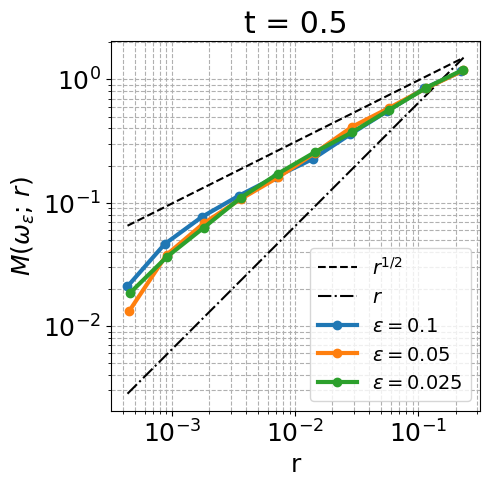}
\caption{$t = 0.5$}
\end{subfigure}
\begin{subfigure}{.45\textwidth}
\includegraphics[width=\textwidth]{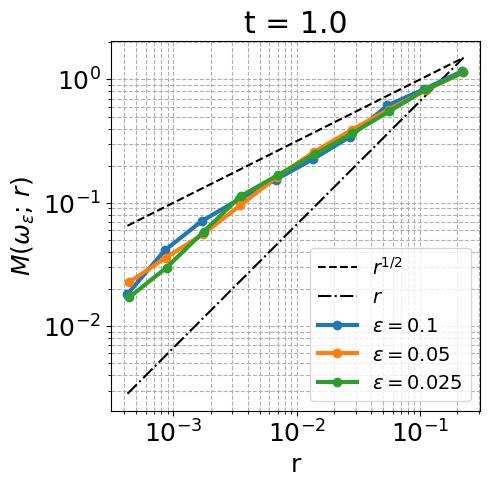}
\caption{$t = 1.0$}
\end{subfigure} 
\caption{Fuselage flap configuration: Evolution of vorticity maximal function $M_r(\omega_\epsilon(t))$ with regularization parameter $\epsilon \in \{ 0.025,0.05,0.1 \}$ at $t = 0.0$, $0.1$, $0.5$, $1.0$. Computed with the vortex-blob scheme using $N=100'001$ vortices.}
\label{fig:vortearly}
\end{figure}

Our main interest is thus in the evolution of this fuselage flap configuration at later times, when the spirals of negative and positive vorticity begin to strongly interact: Will the vorticity maximal function cease to show a uniform decay during these strong interactions?

\begin{figure}[H]
\centering
\begin{subfigure}{.45\textwidth}
\includegraphics[width=\textwidth]{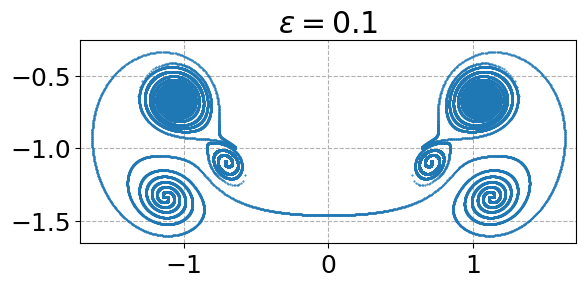}
\caption{$t = 4$}
\end{subfigure}
\begin{subfigure}{.45\textwidth}
\includegraphics[width=\textwidth]{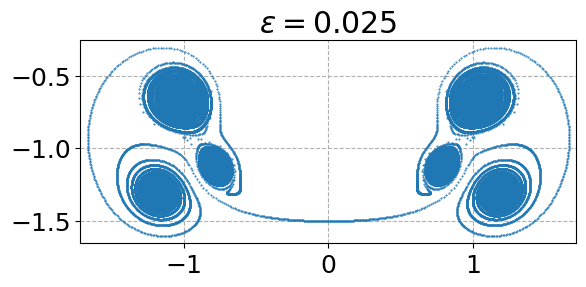}
\caption{$t = 4$}
\end{subfigure} 
\\[5pt]
\begin{subfigure}{.45\textwidth}
\includegraphics[width=\textwidth]{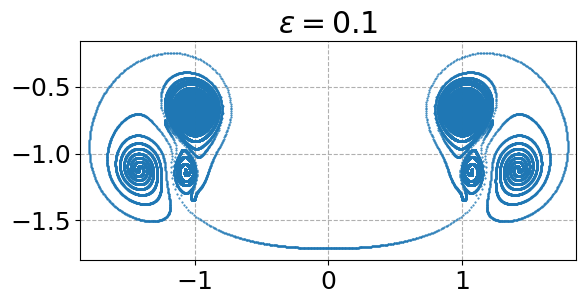}
\caption{$t = 5$}
\end{subfigure}
\begin{subfigure}{.45\textwidth}
\includegraphics[width=\textwidth]{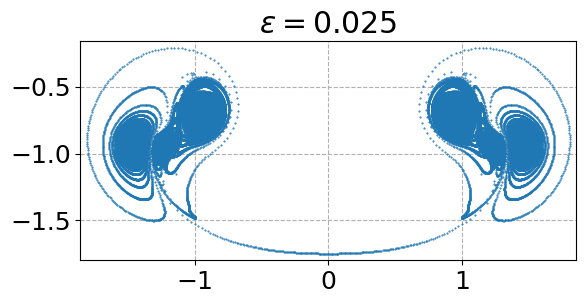}
\caption{$t = 5$}
\end{subfigure} 
\\[5pt]
\begin{subfigure}{.45\textwidth}
\includegraphics[width=\textwidth]{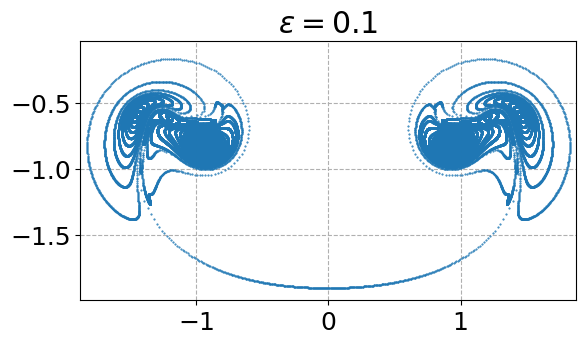}
\caption{$t = 6$}
\end{subfigure}
\begin{subfigure}{.45\textwidth}
\includegraphics[width=\textwidth]{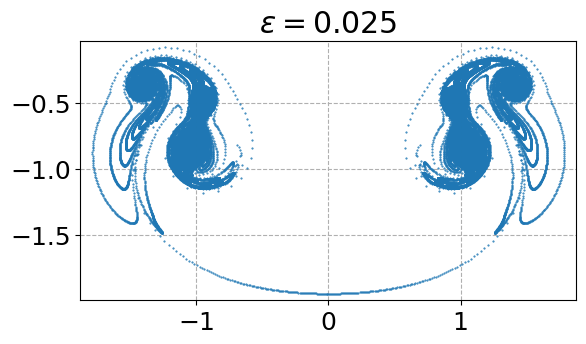}
\caption{$t = 6$}
\end{subfigure}
\\[5pt]
\begin{subfigure}{.45\textwidth}
\includegraphics[width=\textwidth]{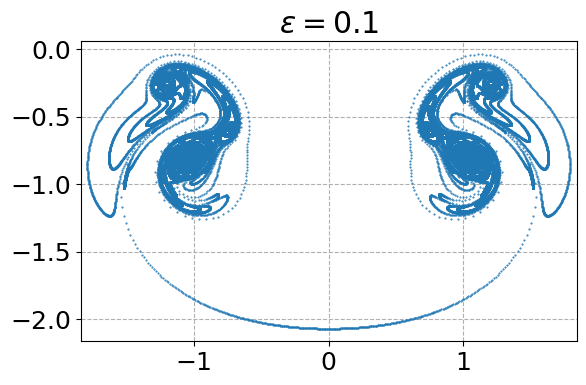}
\caption{$t = 7$}
\end{subfigure}
\begin{subfigure}{.45\textwidth}
\includegraphics[width=\textwidth]{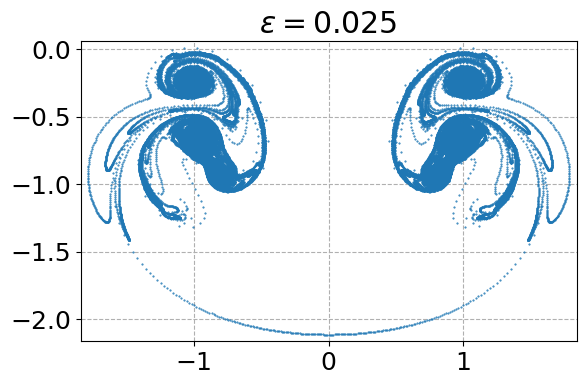}
\caption{$t = 7$}
\end{subfigure}
\\[5pt]
\begin{subfigure}{.45\textwidth}
\includegraphics[width=\textwidth]{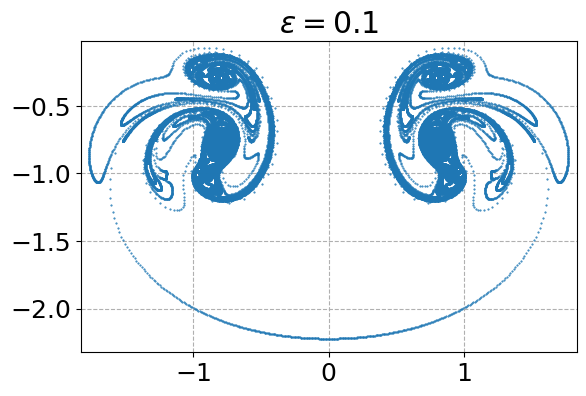}
\caption{$t = 8$}
\end{subfigure}
\begin{subfigure}{.45\textwidth}
\includegraphics[width=\textwidth]{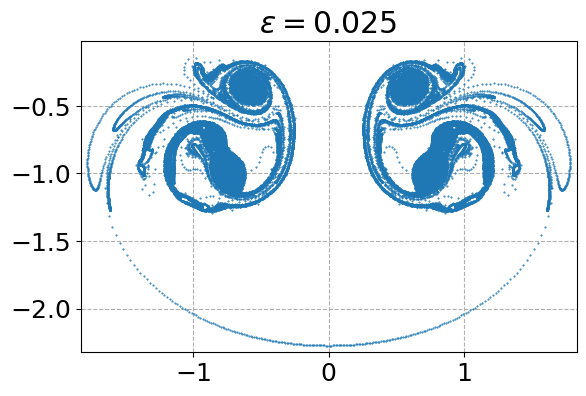}
\caption{$t = 8$}
\end{subfigure} 
\caption{Fuselage flap configuration: Evolution of vortex sheet with regularization parameter $\epsilon = 0.1$ (left) and $\epsilon = 0.025$ (right) for $t \in [4,8]$. Computed with the vortex-blob scheme using $N=100'001$ vortices.}
\label{fig:lateevo}
\end{figure}

\subsubsection{Later evolution}

A visual illustration of the evolution of the fuselage flap configuration at later times $t \in [4,8]$ is presented in Figure \ref{fig:lateevo} (cp. \cite[Figure 23]{Krasny1987}). Our simulation reproduces the very complex interaction of the positive and negative parts of the vortex sheet at these late times, first observed in \cite{Krasny1987}. 

As mentioned in the introduction, this visual indication of the formation of small scale features in vortex sheets with varying sign of the vorticity might indicate that the velocity corresponding to such unsigned vortex sheets (e.g. fuselage flap) could be less regular than the velocity of their signed counterparts (e.g. loaded wing). The availability of an a priori bound on the vorticity maximal function, 
\begin{align} \label{eq:logbound}
\int_{B_r(x)} d|\omega(y)| \equiv \int_{B_r(x)} d\omega(y) \le C |\log(r)|^{-1/2},
\end{align}
which is only available in the signed case $\omega \ge 0$, might be interpreted as circumstantial evidence in support of the higher regularity of the signed case than of the unsigned case. Numerical computations such as the ones presented for the loaded wing configuration in fact appear to exhibit much more regularity,
\[
\int_{B_r(x)} d|\omega(y)| \le C r^\alpha, \quad \alpha > 0.
\]
This observed algebraic decay is clearly not explained by the logarithmic bound \eqref{eq:logbound}, even for non-negative vorticity. In particular, the persistence of such an algebraic bound on the vorticity maximal function may rely on very different, as of yet not understood, dynamical properties of the vortex sheet evolution, which are not explained by the uniform $H^{-1}$-bound which leads to \eqref{eq:logbound}.

To see the effect of the complex interaction between positive and negative vortex spiral on the regularity properties of the associated flow, we again compute the vorticity maximal function numerically, using the algorithm described in Section \ref{sec:global}. The results for times $t\in [2,4]$ are shown in Figure \ref{fig:latemax}. Astonishingly, this numerical analysis of the regularity based on the vorticity maximal function does not provide any support in favour of any deterioration of the vortex sheet even during these complex interactions. Instead, at the level of the vorticity maximal functions we observe a persistent uniform decay, despite the apparently much more complex behaviour of the vortex sheet at these times. Thus, also at these late times do our computation strongly suggest strong convergence (up to a subsequence) to a limiting, energy-conservative weak solution (cp. Theorem \ref{thm:strongconv}) in the limit $\epsilon \to 0$.

\begin{figure}
\centering
\begin{subfigure}{.45\textwidth}
\includegraphics[width=\textwidth]{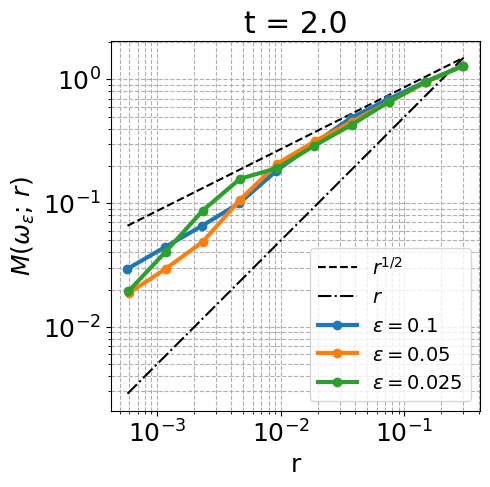}
\caption{$t = 2.0$}
\end{subfigure}
\begin{subfigure}{.45\textwidth}
\includegraphics[width=\textwidth]{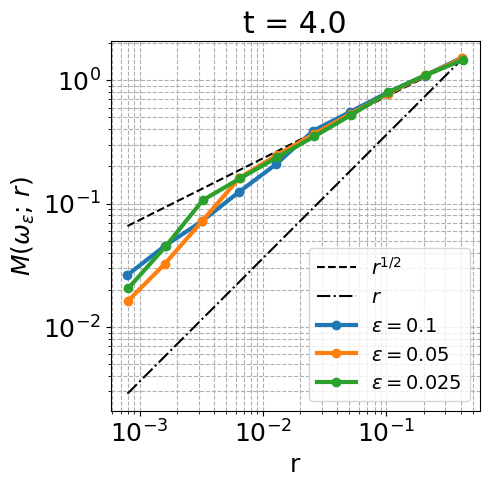}
\caption{$t = 4.0$}
\end{subfigure} 
\\[5pt]
\begin{subfigure}{.45\textwidth}
\includegraphics[width=\textwidth]{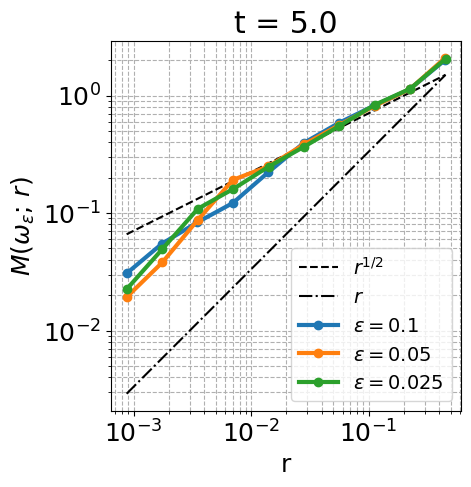}
\caption{$t = 5.0$}
\end{subfigure}
\begin{subfigure}{.45\textwidth}
\includegraphics[width=\textwidth]{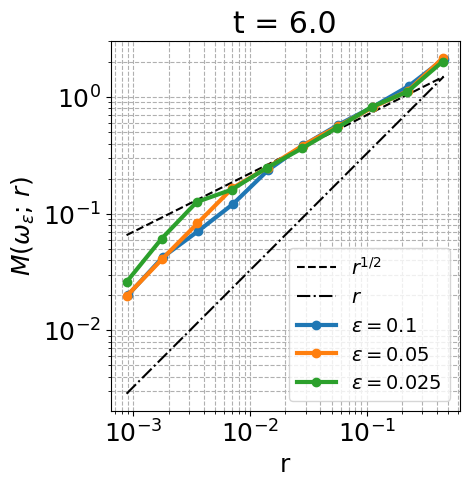}
\caption{$t = 6.0$}
\end{subfigure} 
\\[5pt]
\begin{subfigure}{.45\textwidth}
\includegraphics[width=\textwidth]{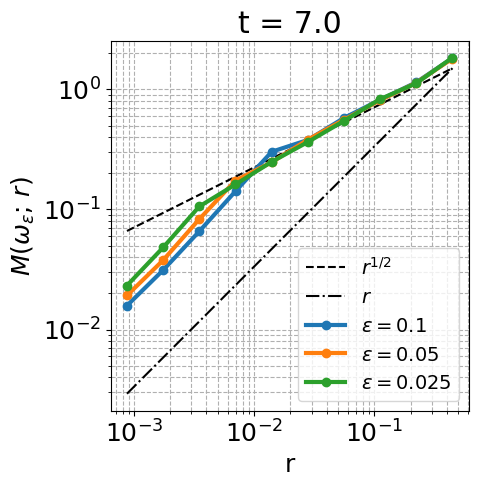}
\caption{$t = 7.0$}
\end{subfigure}
\begin{subfigure}{.45\textwidth}
\includegraphics[width=\textwidth]{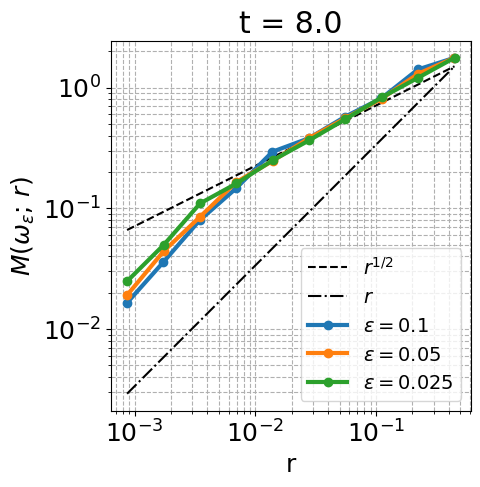}
\caption{$t = 8.0$}
\end{subfigure} 
\caption{Fuselage flap configuration:  Evolution of vorticity maximal function $M_r(\omega_\epsilon(t))$ with regularization parameter $\epsilon \in \{ 0.025,0.05,0.1 \}$ for $t \in [2,8]$. Computed with the vortex-blob scheme using $N=100'001$ vortices.}
\label{fig:latemax}
\end{figure}

\subsubsection{Local behaviour}
Finally, to confirm this observation of uniform decay of the vorticity maximal function, and in order to get a more intuitive explanation why this uniform decay persists despite the complex interactions, we track the local vorticity concentration, following individual points on the vortex sheet. Taking the approach described in Section \ref{sec:local}, we follow three points with parameters $\alpha = 0.675\pi$, $0.81\pi$ and $\pi$ and compute the evolution of the local vorticity concentration
\begin{align} \label{eq:localconc}
r \mapsto \int_{B_r(z(\alpha,t))} d|\omega|.
\end{align}
The location of the tracked points on the vortex sheet at early times is depicted in Figure \ref{fig:earlypos}. The point at $\alpha=0.675\pi$ corresponds to the centre of one of the vortex spirals undergoing the complex interactions. $\alpha = \pi$ tracks the tip of the vortex sheet which experiences roll-up even in the signed loaded wing case. Finally, $\alpha = 0.81\pi$ has been chosen as a representative point which will evolve at the interface between the spirals of positive and negative vorticity surrounding $\alpha = 0.675\pi$ and $\alpha = \pi$. 

\begin{figure}[H]
\centering
\includegraphics[width=0.5\textwidth]{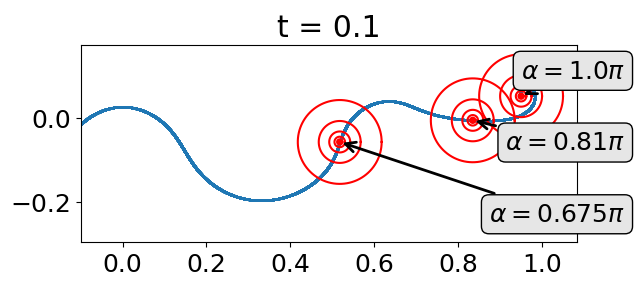}
\caption{Fuselage flap configuration: Points along the vortex sheet chosen for the local tracking, at time $t = 0.1$}
\label{fig:earlypos}
\end{figure}

\begin{figure}[H]
\centering
\begin{subfigure}{.32\textwidth}
\includegraphics[width=\textwidth]{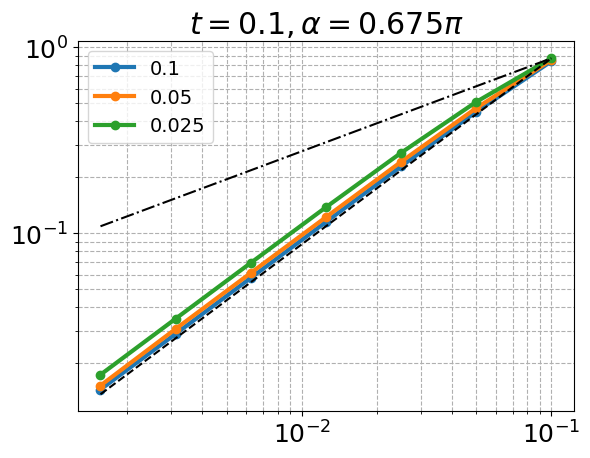}
\caption{$\alpha = 0.675\pi$}
\end{subfigure} 
\begin{subfigure}{.32\textwidth}
\includegraphics[width=\textwidth]{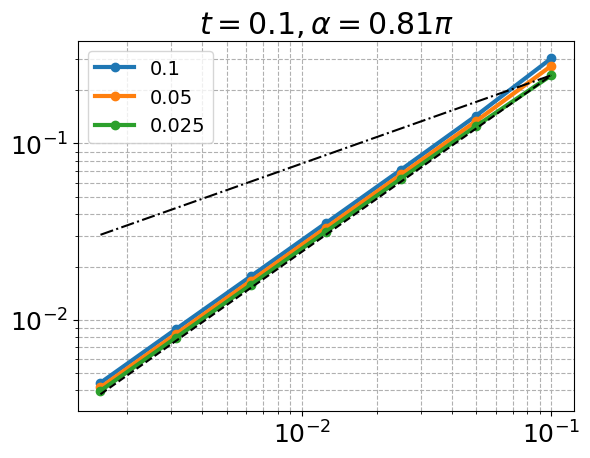}
\caption{$\alpha = 0.81\pi$}
\end{subfigure}
\begin{subfigure}{.32\textwidth}
\includegraphics[width=\textwidth]{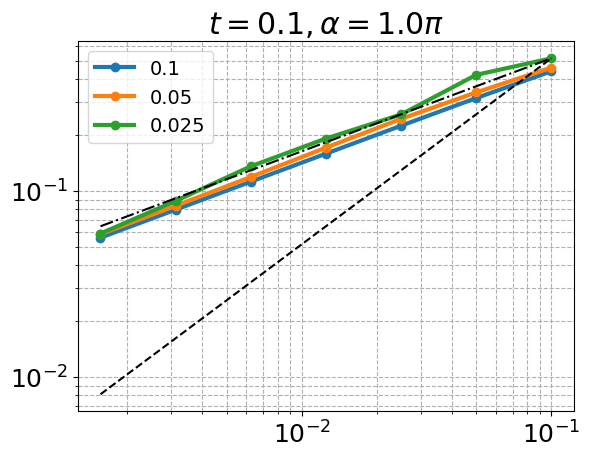}
\caption{$\alpha = \pi$}
\end{subfigure} 
\caption{Fuselage flap configuration: Initial decay of the local vorticity concentration at the tracked points (cp. Figure \ref{fig:earlypos}), at $t=0.1$. Evaluated for regularization parameters $\epsilon = 0.1$, $0.5$ and $0.025$, and at times $t = 0.1$ (top row), $t=0.8$ (middle row), $t= 1.6$ (bottom row). Also indicated are algebraic decay of order $r^{1/2}$ (black dashed line) and $r$ (black solid line). }
\label{fig:trackdecay}
\end{figure}

As shown in Figure \ref{fig:trackdecay}, the initial decay of the local vorticity concentration \eqref{eq:localconc} is $\sim r$ at $\alpha = 0.675\pi$, $0.81\pi$, and $\sim r^{1/2}$ at the vortex sheet tip $\alpha = \pi$. The subsequent evolution of the vortex sheet and the tracked points during the complex interaction of the vortex sheet is depicted in Figure \ref{fig:visualtracking}. Visible are the complicated intertwining of different parts of the vortex sheet, which had already been pointed out in \cite{Krasny1987}. A quantitative analysis of the corresponding evolution of the local vorticity function at the tracked points is provided in Figure \ref{fig:trackingdecay}. The results of the local tracking show very clearly that the vorticity concentration is highest at the centers of the individual spirals, corresponding to regions of either positive or negative vorticity. The vorticity concentration in between these regions is observed to be much reduced in the region of direct ``contact'' between the spirals, i.e. the region along which the point $z(\alpha,t)$ at $\alpha = 0.81\pi$ evolves (cp. Figure \ref{fig:trackingdecay}, center column). As the centers of the spirals with the highest concentrations of positive and negative vorticities do not collide, but rather circle each other throughout the evolution, the vortex concentration at their core does not appear to be affected by the interaction with the parts of the vortex sheet with opposite sign. The consequence of this dynamical behaviour, and the non-collision of the vortex-spiral centers, is a decay of the vorticity maximal function, behaving akin to what has been observed in the case of a vortex sheet with effectively distinguished sign (and mirror symmetry), in Section \ref{sec:loadedwing}. 

To summarize: also for this fuselage flap configuration, an unsigned vortex sheet case, do we find strong numerical evidence in favour of a uniform decay of the vorticity maximal function, and hence strong convergence to an energy-conservative weak solution by Theorem \ref{thm:strongconv}. In particular, at the considered regularization parameters, no indication of any concentration phenomena as described in \cite{DipernaMajda1987a,DipernaMajda1987b} is found.

\begin{figure}[H]
\centering
\begin{subfigure}{.49\textwidth}
\includegraphics[width=\textwidth]{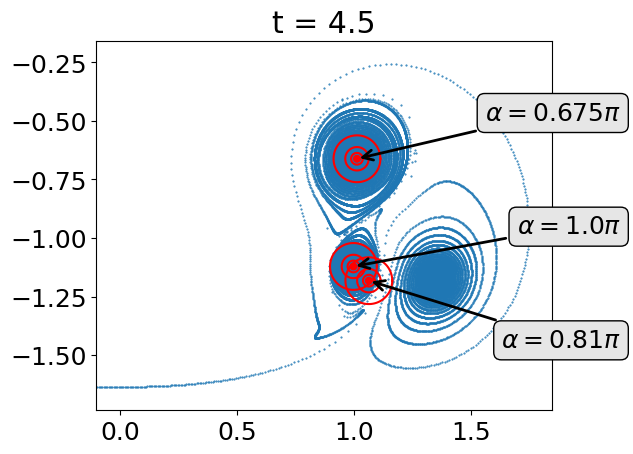}
\caption{$t = 4.5$}
\end{subfigure} 
\begin{subfigure}{.49\textwidth}
\includegraphics[width=.75\textwidth]{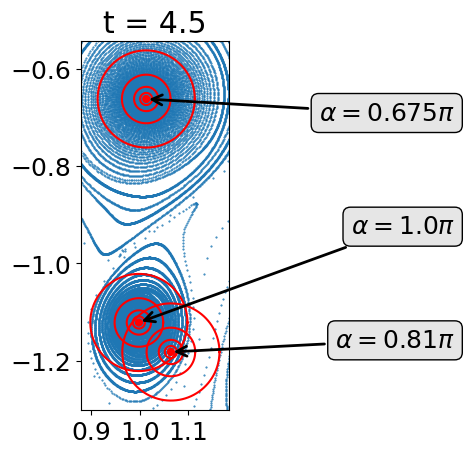}
\caption{close-up view}
\end{subfigure}
\\[5pt]
\begin{subfigure}{.49\textwidth}
\includegraphics[width=\textwidth]{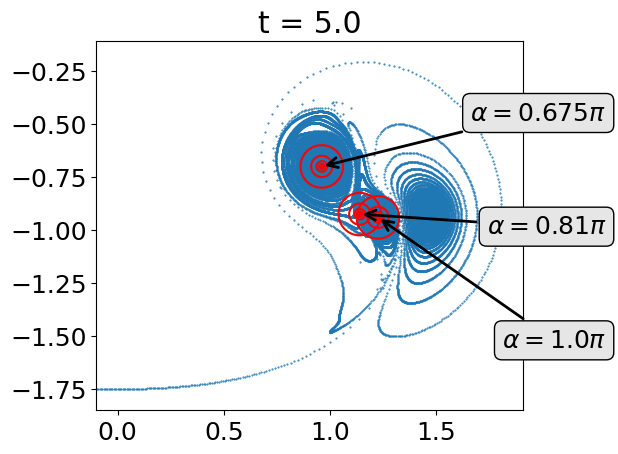}
\caption{$t = 5.0$}
\end{subfigure} 
\begin{subfigure}{.49\textwidth}
\includegraphics[width=.95\textwidth]{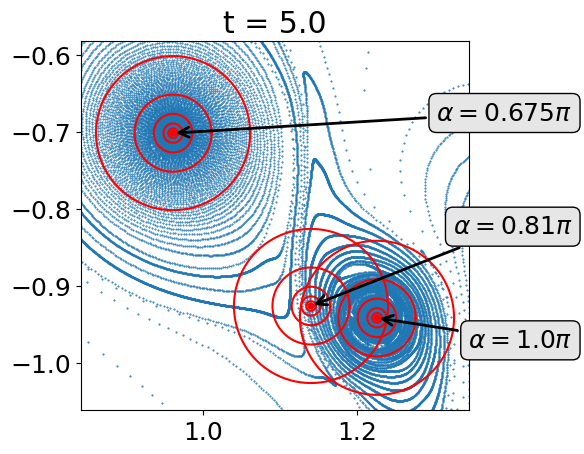}
\caption{close-up view}
\end{subfigure}
\\[5pt]
\begin{subfigure}{.49\textwidth}
\includegraphics[width=\textwidth]{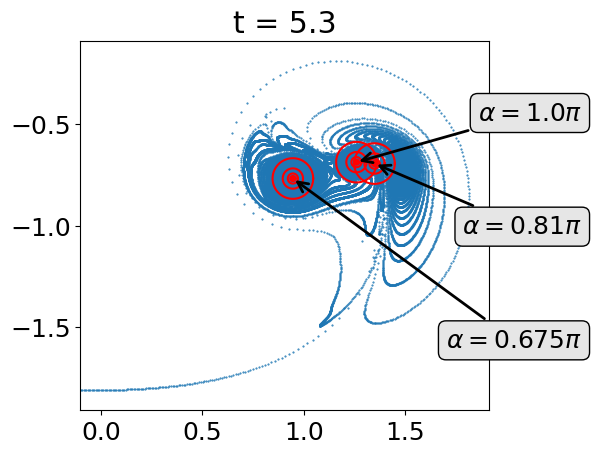}
\caption{$t=5.3$}
\end{subfigure} 
\begin{subfigure}{.49\textwidth}
\includegraphics[width=\textwidth]{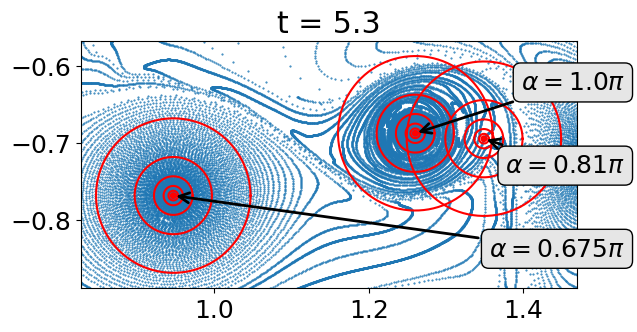}
\caption{close-up view}
\end{subfigure}
\caption{Fuselage flap configuration: Visual tracking of the evolution of individual points at $\alpha = 0.675\pi$, $0.81\pi$ and $\pi$, along the vortex sheet. Red circles indicate the length scales for which the local vorticity has been evaluated. (left) general view; (right) close-up view}
\label{fig:visualtracking}
\end{figure}

\begin{figure}[H]
\centering
\begin{subfigure}{.32\textwidth}
\includegraphics[width=\textwidth]{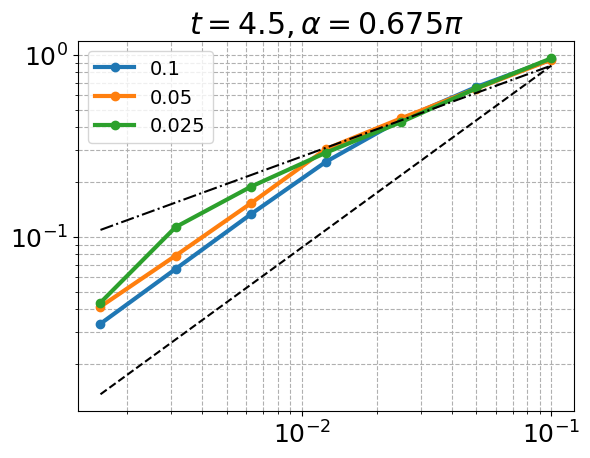}
\caption{$\alpha = 0.675\pi$}
\end{subfigure} 
\begin{subfigure}{.32\textwidth}
\includegraphics[width=\textwidth]{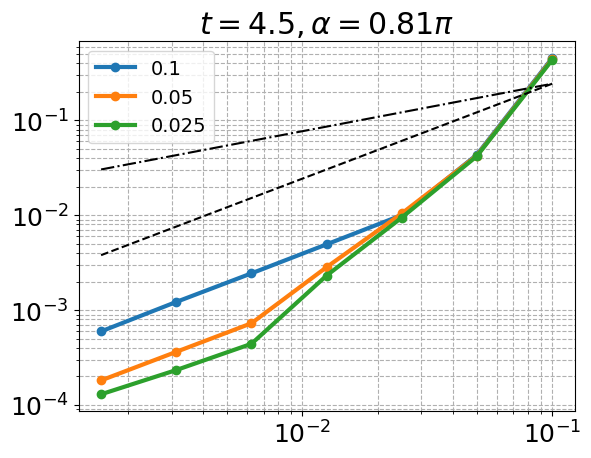}
\caption{$\alpha = 0.81\pi$}
\end{subfigure}
\begin{subfigure}{.32\textwidth}
\includegraphics[width=\textwidth]{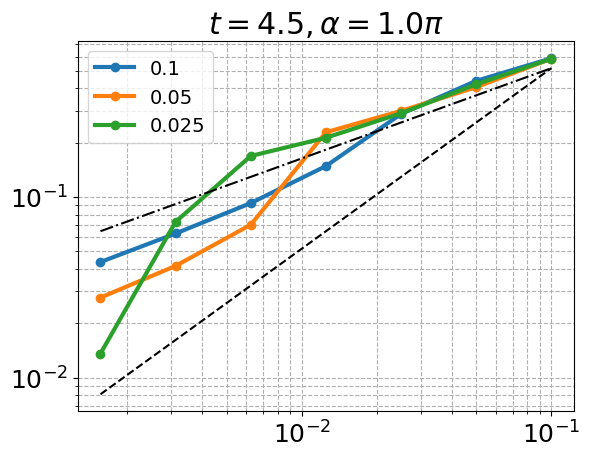}
\caption{$\alpha = \pi$}
\end{subfigure} 
\\[5pt]
\begin{subfigure}{.32\textwidth}
\includegraphics[width=\textwidth]{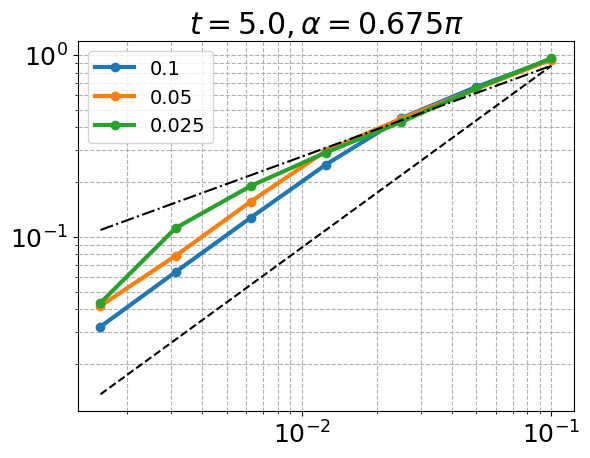}
\caption{$\alpha = 0.675\pi$}
\end{subfigure} 
\begin{subfigure}{.32\textwidth}
\includegraphics[width=\textwidth]{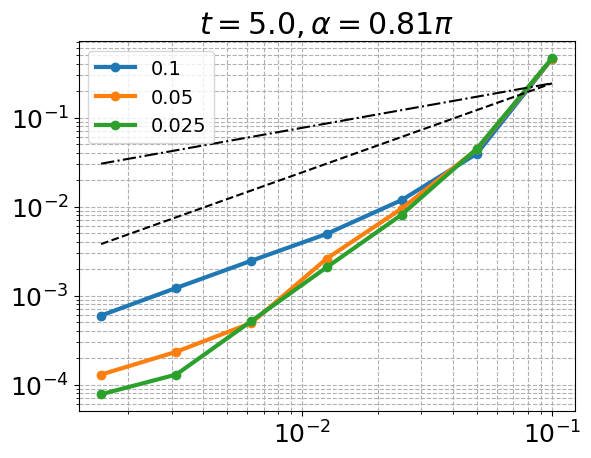}
\caption{$\alpha = 0.81\pi$}
\end{subfigure}
\begin{subfigure}{.32\textwidth}
\includegraphics[width=\textwidth]{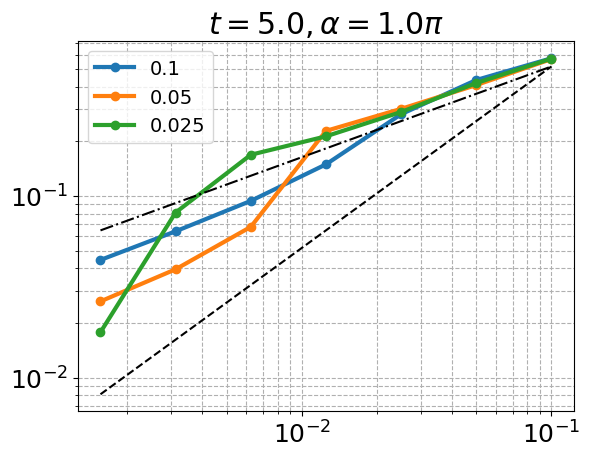}
\caption{$\alpha = \pi$}
\end{subfigure} 
\\[5pt]
\begin{subfigure}{.32\textwidth}
\includegraphics[width=\textwidth]{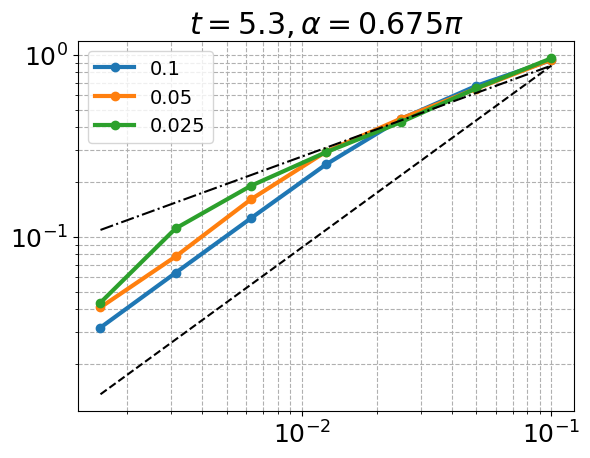}
\caption{$\alpha = 0.675\pi$}
\end{subfigure} 
\begin{subfigure}{.32\textwidth}
\includegraphics[width=\textwidth]{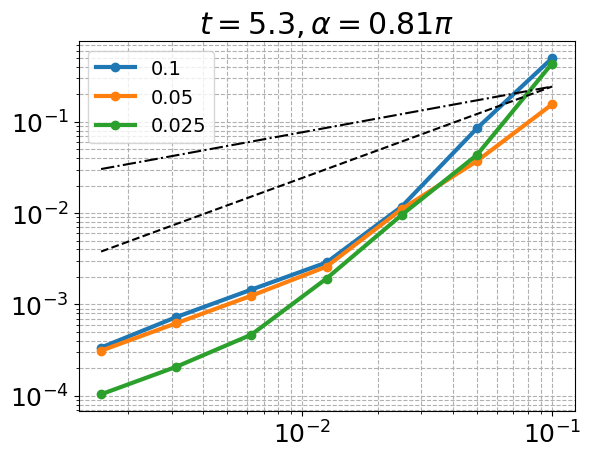}
\caption{$\alpha = 0.81\pi$}
\end{subfigure}
\begin{subfigure}{.32\textwidth}
\includegraphics[width=\textwidth]{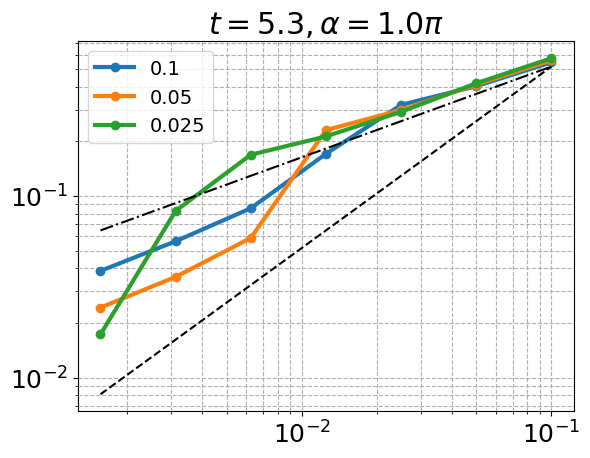}
\caption{$\alpha = \pi$}
\end{subfigure} 
\caption{Fuselage flap configuration: Local decay of the vorticity along tracked points, at times $t = 4.5$ (top row), $t=5.0$ (middle row), $t= 5.3$ (bottom row). Evaluated for regularization parameters $\epsilon = 0.1$, $0.5$ and $0.025$. Also indicated are algebraic decay of order $r^{1/2}$ (black dashed line) and $r$ (black solid line).}
\label{fig:trackingdecay}
\end{figure}

\section{Conclusions} \label{sec:conclusion}

The present work considers the question of energy concentration in the evolution of vortex sheets in two dimensions. While some results on the existence of weak solutions and convergence of numerical approximations are available in the signed vortex sheet case $\omega \ge 0$, obtaining a more precise understanding of the behaviour of (approximate) solutions of the incompressible Euler equations with vortex sheet initial data, without any sign restriction, remains an outstanding challenge. The detailed simulations performed in \cite{Krasny1987} have been influential in demonstrating the apparently much more complex dynamics of unsigned vortex sheets in comparison with their signed counterparts. Based on our theoretical understanding of these equations, it can at present not be ruled out that in the case of unsigned vortex sheets, approximate solution sequences might exhibit concentration phenomena in the limit. In fact, the computations presented in \cite{Krasny1987} have been proposed as possible evidence of concentration effects in approximate solution sequences with unsigned vortex sheet initial data \cite[Remark 3.2]{DipernaMajda1987a}.

The present work aims to shed some new light on the long-standing question of concentrations in approximate solution sequences. To this end, analytical tools are first developed to allow a quantitative analysis of the numerical results: The structure functions, already studied in \cite{LMP2020} form the basis of this analysis. As explained in the present work (cp. Remark \ref{rem:concentration}), uniform bounds on the structure functions imply that no concentration in approximate solution sequences can take place in the limit. A novel expression for the structure function is derived, relating the decay of structure functions to the decay of average correlations of the vorticity. Based on this identity, a priori estimates on the structure function are given for solutions with a uniform decay of the vorticity maximal function. This also provides a different proof (and a slight improvement in terms of assumptions) of the logarithmic circulation theorem of Diperna and Majda \cite[Theorem 3.1]{DipernaMajda1987b} in the present setting. The latter a priori estimates are then applied to the vortex-blob method to prove that a uniform algebraic decay of the vorticity maximal function implies strong convergence of approximate solutions generated by the vortex-blob method, to an energy-conservative solution. 

The presented analytic estimates provide an efficient practical criterion to test for the convergence of solution sequences obtained in numerical experiments. This is illustrated with numerical experiments based on the vortex-blob method with the vortex sheet initial data first studied in \cite{Krasny1987}. The numerical experiments of \cite{Krasny1987} are revisited, and analysed in terms of the decay properties of the obtained vorticity maximal functions. The proposed estimates based on the vorticity maximal function are particularly suited to vortex-blob and vortex-point methods, since these methods provide direct access to the discretized vorticity $\omega_\epsilon$, a sum of individual vortices, rather than the velocity values on e.g. a regular grid, which would be required for the evaluation of the structure functions.

The main new finding of the present work is that, despite the evidently much more complex dynamics of vortex-sheet computations without a sign-restriction compared to simulations with vorticity of distinguished sign, the behaviour in terms of the temporal evolution of the vorticity maximal function provides strong numerical evidence that no concentration phenomena are observed for the considered initial data, even in the limit of zero regularisation $\epsilon \to 0$. In particular, as shown in the present work, such non-concentration would imply strong (subsequential) convergence of approximate solution sequences to an energy-conservative solution of the incompressible Euler equations, for both signed and unsigned vortex sheet initial data. The observed persistence of regularity in approximate solution sequences of the incompressible Euler equations, is similar to recent observations obtained from spectral methods in \cite{LMP2019}.

Despite the current lack of rigorous a priori estimates, which can explain the observed persistence of regularity in the two-dimensional incompressible Euler equations, the numerical results presented in this work suggest that there may be (as of yet undiscovered) dynamical mechanisms which prevent dynamical concentration effects. The analytic tools developed in the present work pave the way for further investigations of the detailed dynamics of unsigned vortex sheets. In future numerical experiments, it would be desirable to consider different initial configurations and probe smaller values of the regularization parameter $\epsilon$. 

\subsection*{Data availability statement}
The datasets generated during and/or analysed during the current study are available from the corresponding author on reasonable request.

\subsection*{Conflict of interest statement}

The author declares no competing interests.

\appendix


\section{Derivation of the structure function identity \eqref{eq:structvort}}
\label{app:structfun}

The goal of this section is to derive an expression for the structure function 
\[
S_2(u;r)^2 =  \int_{\dom} \fint_{B_r(0)} |u(x+h)-u(x)|^2 \, dh \, dx,
\]
in terms of the vorticity $\omega$ (in two spatial dimensions). Let us throughout assume that $u\in L^2_x$ is divergence-free, $\omega = \curl(u) \in L^1_x \cap L^\infty_x$ has compact support and $u = K\ast\omega$ (cp. Remark \ref{rem:kernel}). We also introduce the stream function $\psi = G \ast \omega$, where 
\[
G(x) = \frac{1}{2\pi} \log(|x|),
\]
denotes the fundamental solution of the Laplacian. Note that with this choice of $G$, we have $\Delta \psi = \omega$.

\begin{lemma} \label{lem:structintermediate}
If $\omega = \curl(u) \in L^1_x\cap L^\infty_x$ has compact support, $u = K\ast \omega$, $\psi = G \ast \omega$, then
\begin{gather} \label{eq:structintermediate}
\begin{aligned}
\frac 12 \fint_{B_r(0)} &\int_{\dom} |u(x+h) - u(x)|^2 \, dx \, dh
&=
-\int_{\dom} \left(\psi(x)-[\psi]_r(x)\right) \omega(x) \, dx,
\end{aligned}
\end{gather}
where $[\psi]_r(x) := \fint_{B_r(0)} \psi(x+h) \, dh$.
\end{lemma}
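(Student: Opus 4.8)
\section*{Proof proposal}

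The plan is to exploit the fact that, since $K = \nabla^\perp G$, the velocity is the perpendicular gradient of the stream function, $u = \nabla^\perp \psi$, so that pointwise $|u|^2 = |\nabla\psi|^2$ and $u(x+h)\cdot u(x) = \nabla\psi(x+h)\cdot\nabla\psi(x)$. First I would expand the square under the integral, writing $|u(x+h)-u(x)|^2 = |u(x+h)|^2 - 2\,u(x+h)\cdot u(x) + |u(x)|^2$, and use translation invariance of Lebesgue measure to identify $\int_\dom |u(x+h)|^2\,dx = \int_\dom |u(x)|^2\,dx$ for each fixed $h$. After averaging over $h\in B_r$ this reduces the left-hand side to
\[
\tfrac12\fint_{B_r}\int_\dom |u(x+h)-u(x)|^2\,dx\,dh
= \int_\dom |u|^2\,dx - \fint_{B_r}\int_\dom u(x+h)\cdot u(x)\,dx\,dh.
\]

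Next I would evaluate the two remaining terms by integration by parts, converting gradients of $\psi$ into the Laplacian $\Delta\psi = \omega$. For the diagonal term, $\int_\dom |u|^2\,dx = \int_\dom |\nabla\psi|^2\,dx = -\int_\dom \psi\,\Delta\psi\,dx = -\int_\dom \psi\,\omega\,dx$. For the cross term, using $u(x+h)\cdot u(x) = \nabla\psi(x+h)\cdot\nabla\psi(x)$ and integrating by parts in $x$ gives $\int_\dom \nabla\psi(x+h)\cdot\nabla\psi(x)\,dx = -\int_\dom \psi(x+h)\,\Delta\psi(x)\,dx = -\int_\dom \psi(x+h)\,\omega(x)\,dx$; averaging over $h\in B_r$ and applying Fubini together with the definition of $[\psi]_r$ yields $-\int_\dom [\psi]_r(x)\,\omega(x)\,dx$. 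Subtracting the two contributions produces exactly $-\int_\dom (\psi - [\psi]_r)\,\omega\,dx$, which is the claimed identity.

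The main obstacle is the rigorous justification of these two integrations by parts, i.e.\ showing that the boundary terms at infinity vanish. This is where the decay estimates recalled in Remark \ref{rem:kernel} enter. Since $\omega$ has compact support, the products $\psi\,\omega$ and $\psi(\slot+h)\,\omega$ are compactly supported and integrable, so all right-hand integrals are finite regardless of decay; the issue is rather the interior identity $\int_{B_R}|\nabla\psi|^2 = -\int_{B_R}\psi\,\Delta\psi + \int_{\partial B_R}\psi\,\partial_n\psi\,dS$, where the boundary integral must be shown to vanish as $R\to\infty$. Invoking $u\in L^2_x$ together with the asymptotics $u(x) = O(|x|^{-2})$ and $\psi(x) = O(|x|^{-1})$, which hold precisely when $\int_\dom\omega\,dy = 0$ (the effective situation under the appendix's standing assumption $u\in L^2_x$, as in Remark \ref{rem:kernel}), the boundary integrand decays like $|x|^{-3}$, so the boundary term is $O(R^{-2})\to 0$; the same estimate controls the cross term, using $\psi(\slot+h) = O(|x|^{-1})$ and $\partial_n\psi = O(|x|^{-2})$. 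To make this fully rigorous I would first establish the identity for smooth, compactly supported $\omega$, where all manipulations are classical, and then conclude in general, if needed, by the mollification and truncation arguments already employed in the proof of Theorem \ref{thm:structmaxest}.
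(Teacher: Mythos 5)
Your overall strategy -- pass from $|u|^2=|\nabla\psi|^2$ to $-\psi\,\Delta\psi=-\psi\,\omega$ by integration by parts and then average over $h$ -- is the same mechanism the paper uses, and your computation does land on the correct identity. However, there is a genuine gap in the order of operations. You expand $|u(x+h)-u(x)|^2=|u(x+h)|^2-2\,u(x+h)\cdot u(x)+|u(x)|^2$ \emph{before} integrating by parts, which requires each term, in particular $\int_\dom|u|^2\,dx$, to be finite. For compactly supported $\omega$ with $\int_\dom\omega\,dy\neq 0$ one has $u(x)\sim K(x)\int\omega\sim |x|^{-1}$, so $u\notin L^2_x$ and your decomposition is $\infty-\infty$. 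The lemma does not assume $\int_\dom\omega\,dy=0$, and Remark \ref{rem:L2remark} stresses that this hypothesis is deliberately avoided; moreover the general case is actually needed downstream, since Step 2 of the proof of Theorem \ref{thm:structmaxest} applies the resulting estimate to the truncation $\omega_R=\rho_R\,\omega$, which has no reason to have zero mean even when $\omega$ does. You flag the issue yourself by retreating to the appendix's standing assumption $u\in L^2_x$, but that proves a strictly weaker statement than the one used in the paper. Your proposed fallback (prove it for nice $\omega$, then mollify and truncate) does not repair this: mollification and truncation do not restore the zero-mean property, and since \eqref{eq:structintermediate} is an exact identity rather than a one-sided bound, a Fatou-type passage to the limit would not recover it anyway.

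The fix is the one the paper implements: for fixed $h$, work throughout with the differences $U(x):=u(x+h)-u(x)$, $\Psi(x):=\psi(x+h)-\psi(x)$, $\Omega(x):=\omega(x+h)-\omega(x)$. The vorticity $\Omega$ is compactly supported with $\int_\dom\Omega\,dx=0$ \emph{automatically}, so $U=K\ast\Omega\in L^2_x$ and $|U(x)|\lesssim|x|^{-2}$ by Remark \ref{rem:kernel}, while the crude bound $|\Psi(x)|\lesssim|\log|x||$ suffices to kill the boundary term $\int_{\partial B_R}\Psi\,(\nabla\Psi\cdot\nu)\,d\sigma=O(R^{-1}\log R)$. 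One then obtains $\int_\dom|U|^2\,dx=-\int_\dom\Psi\,\Omega\,dx$ and only \emph{afterwards} expands $\Psi\,\Omega$ into four products, each integrable because $\omega$ is compactly supported and $\psi\in L^\infty_{x,\loc}$; a change of variables and the average over $h\in B_r$ then give \eqref{eq:structintermediate}. In short: expand after, not before, the integration by parts.
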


\begin{proof}
Note that $\psi \in L^\infty_{x,\loc}$, $\nabla^\perp \psi = u \in L^2_{x,\loc}$, $\Delta \psi = \omega \in L^1_x\cap L^\infty_x \subset L^2_x$, by the assumptions of this lemma. 
Choose $R_0>0$, such that $\supp(\omega) \subset B_{R_0}(0)$. Fix $h\in \dom$ for the moment. Denote $U(x) := u(x+h)-u(x)$, $\Psi(x) := \psi(x+h)-\psi(x)$, $\Omega(x) := \omega(x+h)-\omega(x)$. Note that $\Omega$ is compactly supported, and $\int_\dom \Omega(x) \, dx = 0$. In particular, following Remark \ref{rem:kernel}, this implies that $U = K\ast \Omega \in L^2_x$. Furthermore, $U$ and $\Psi$ are smooth on $\dom\setminus \overline{B_{R_0}}(0)$, and 
\[
\Psi(x) \lesssim |\log|x||, \qquad   |\nabla \Psi(x)|= |U(x)|\lesssim |x|^{-2},
\]
as $|x|\to \infty$.  Let now $R>R_0$. After an integration by parts, we find
\begin{align*}
\int_{B_R(0)}  |U|^2\, dx
&=
\int_{B_R(0)} |\nabla \Psi|^2 \, dx
\\
&=
\underbrace{\int_{\partial B_R(0)} \Psi (\nabla\Psi\cdot \nu) \, d\sigma}_{\lesssim R^{-1} \log(R)}
 -\int_{B_R(0)}  \Psi(x)\Omega(x) \, dx,
\end{align*}
where $\nu$ denotes the outward pointing normal vector to $\partial B_R(0)$. Letting $R\to \infty$, and replacing $U,\Omega,\Psi$ by their definitions, we can now write
\begin{align*}
\int_{\dom}  |u(x+h)-u(x)|^2\, dx
&=
-\int_{\dom}  [\psi(x+h)-\psi(x)][\omega(x+h)-\omega(x)] \, dx
\\
&=
-\int_{\dom}  [\psi(x+h)\omega(x+h) + \psi(x)\omega(x)] \, dx 
\\
&\qquad 
+ \int_{\dom}  [\psi(x+h)\omega(x) + \psi(x)\omega(x+h)] \, dx
\\
&\explain={\text{(change of variables)}}
-\int_{\dom}  2[\psi(x)\omega(x)] \, dx 
\\
&\qquad 
+ \int_{\dom}  [\psi(x+h)\omega(x) + \psi(x-h)\omega(x)] \, dx
\\
&= 
-\int_{\dom}  [\psi(x)-\psi(x+h)]\omega(x) \, dx
\\
&\qquad 
-\int_{\dom}  [\psi(x)-\psi(x+h)]\omega(x) \, dx.
\end{align*}
We point out that the manipulations on the right-hand side are justified, since $\psi \in L^\infty_{x,\loc}$ and $\omega \in L^\infty_x$ with $\supp(\omega)$ compact. It follows that, upon integration over $h\in B_r(0)$, the two last terms can be combined and 
\begin{align}
\fint_{B_r(0)} \int_{\dom}  |u(x+h)-u(x)|^2\, dx \, dh
&=
-2\int_{\dom}  [\psi(x)-[\psi]_r(x)]\omega(x) \, dx,
\end{align}
where $[\psi]_r(x) := \fint_{B_r(0)} \psi(x+h) \, dh$. The claim follows.
\end{proof}

We next wish to express the difference $\psi(x) - [\psi]_r(x)$ in terms of the vorticity. For this we will need the following lemma:

\begin{lemma} \label{lem:log}
Let $e \in S^1$ be a unit vector, and let $s\ge 0$. Then
\[
\fint_{S^1} \log |e + s\sigma| \, d\sigma = \log(s)^{+},
\]
where $\log(s)^+ := \max(0,\log(s))$ denotes the positive part of the logarithm.
\end{lemma}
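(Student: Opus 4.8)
The plan is to recognize the left-hand side as the mean value of the harmonic function $y \mapsto \log|y|$ over a circle, and to treat separately the cases where the origin lies outside or inside that circle. First I would note that the map $\sigma \mapsto e + s\sigma$ pushes the normalized measure on $S^1$ forward to the normalized arc-length measure on the circle $C_s(e)$ of radius $s$ centered at $e$, where $|e| = 1$. Thus
\[
\fint_{S^1}\log|e + s\sigma|\, d\sigma = \fint_{C_s(e)} \log|y|\, d\ell(y),
\]
the average of $u(y) := \log|y|$, which is harmonic on $\dom \setminus \{0\}$, over $C_s(e)$.

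For $0 \le s < 1$ the origin lies at distance $|e| = 1 > s$ from the center, so $u$ is harmonic on a neighborhood of the closed disk $\overline{B_s(e)}$. The mean value property then gives $\fint_{C_s(e)} u\, d\ell = u(e) = \log 1 = 0$, which matches $\log(s)^+ = 0$. The endpoint $s=1$ follows by continuity, since the integrand has only an integrable logarithmic singularity.

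For $s > 1$ the origin lies inside the circle, so the naive mean value property no longer applies; this is the main obstacle. To handle it I would factor out the radius via $|e + s\sigma| = s\,|\sigma + s^{-1}e|$, giving
\[
\fint_{S^1}\log|e + s\sigma|\, d\sigma = \log s + \fint_{S^1}\log|\sigma - a|\, d\sigma, \qquad a := -s^{-1}e,
\]
with $|a| = 1/s < 1$. It then remains to show that $h(a) := \fint_{S^1}\log|\sigma - a|\, d\sigma$ vanishes for $|a| < 1$. By rotational symmetry $h$ depends only on $|a|$, and on $B_1(0)$ --- where the integration variable $\sigma$ never meets $a$ --- one may differentiate under the integral to see that $h$ is harmonic in $a$ (using $\Delta_a \log|\sigma - a| = 0$ for $\sigma \neq a$). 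A rotation-invariant harmonic function on the disk is constant, and evaluating at $a = 0$ gives $h(0) = \fint_{S^1}\log 1\, d\sigma = 0$; hence $h \equiv 0$ and the average equals $\log s = \log(s)^+$. Alternatively, $h(a) = 0$ follows from writing $\log|\sigma - a| = \operatorname{Re}\log\sigma + \operatorname{Re}\log(1 - a\bar\sigma)$ and averaging the convergent power series of $\log(1 - a\bar\sigma)$ term by term.

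The only delicate points are the justification of differentiation under the integral sign in the definition of $h$ (clear, since $a$ stays in a compact subset of the open disk, bounded away from $S^1$) and the continuity argument at $s = 1$; both are routine. The conceptual crux is the reduction, via the scaling identity, of the ``singularity inside'' case $s>1$ to the harmonic regime already handled for $s<1$.
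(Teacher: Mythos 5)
Your proof is correct and follows essentially the same route as the paper: the same case split at $s=1$, the mean value property of the harmonic function $\log|\cdot|$ for $0\le s<1$, and the same scaling identity $\log|e+s\sigma|=\log s+\log|s^{-1}e+\sigma|$ for $s>1$, with the endpoint $s=1$ recovered by a limiting argument. The only (minor) divergence is in the residual integral for $s>1$: since $|\sigma-a|=|\sigma+s^{-1}e|=|e+s^{-1}\sigma|$ pointwise (both equal $\sqrt{1+2s^{-1}\cos\theta+s^{-2}}$), the paper simply recognizes the leftover average as the already-settled case $s^{-1}<1$, whereas you prove the vanishing of $h(a)=\fint_{S^1}\log|\sigma-a|\,d\sigma$ by a separate, correct but slightly longer, argument via harmonicity in the parameter $a$.
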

\begin{proof}
(i) We note that $z\mapsto \log(|e+z|)$ is harmonic in the ball $z\in B_1(0)$. From the mean value property of harmonic functions, it therefore follows that, for $0\le s < 1$:
\[
\fint_{S^1} \log |e + s\sigma| \, d\sigma = \log |e+0| = 0 = \log(s)^+.
\]
 (ii) Now assume that $s> 1$. We introduce a polar angle $\theta\in [0,2\pi)$ such that $\sigma \in S^1$ has the parametrization $\sigma = (\cos(\theta),\sin(\theta))$, and $e = (1,0)$. We can write
\begin{align*}
\log| e + s\sigma | 
&= \frac 12 \log \left( 1 + 2s\cos(\theta) + s^2 \right)
\\
&= \frac 12 \log\left(s^2\right) + \frac 12 \left( s^{-2} + 2s^{-1}\cos(\theta) + 1 \right)
\\
&= \log(s) + \log |e + s^{-1}\sigma |.
\end{align*}
Thus, in this case we find, using also (i), that
\[
\fint_{S^1} \log |e + s\sigma| \, d\sigma 
=
\fint_{S^1} \left(\log(s) + \log |e + s^{-1}\sigma | \right) \, d\sigma 
=
\log(s) = \log(s)^+,
\]
where the integral over the second integrand vanishes, since $s^{-1} < 1$. The claimed equality for $s=1$ is obtained by taking the limit $s\to 1$.
\end{proof}

\begin{lemma} \label{lem:logrewrite}
Let $r>0$, $z\in \dom\setminus \{0\}$. Then
\[
\fint_{B_r(0)} [\log |z+h|-\log|z|] \, dh
= 
2\pi \Sigma\left(\frac{|z|}{r}\right),
\]
where 
\[
\Sigma(\rho) := 
\begin{cases}
\frac{1}{4\pi}\left(|\log(\rho^2)| - 1 + \rho^2\right), & (\rho \le 1), \\
0, & (\rho > 1).
\end{cases}
\]
\end{lemma}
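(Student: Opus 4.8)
The plan is to reduce the two-dimensional average over $B_r$ to an angular average, which is handled directly by Lemma~\ref{lem:log}, and then to carry out an elementary radial integration. First I would pass to polar coordinates $h = t\sigma$, with $t \in [0,r]$ and $\sigma \in S^1$, so that the area-normalised average becomes
\[
\fint_{B_r} \log|z+h| \, dh = \frac{1}{\pi r^2}\int_0^r \left(\int_{S^1} \log|z + t\sigma| \, d\sigma\right) t \, dt.
\]
Writing $z = |z|\,e$ with $e = z/|z| \in S^1$ and using the homogeneity $\log|z + t\sigma| = \log|z| + \log|e + (t/|z|)\sigma|$, Lemma~\ref{lem:log} applied with $s = t/|z|$ evaluates the inner angular average as
\[
\fint_{S^1} \log|e + (t/|z|)\sigma| \, d\sigma = \log(t/|z|)^{+}.
\]

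The additive constant $\log|z|$ has $B_r$-average equal to $\log|z|$ and therefore cancels after subtraction, leaving
\[
\fint_{B_r}\bigl[\log|z+h| - \log|z|\bigr] \, dh = \frac{2}{r^2}\int_0^r \log(t/|z|)^{+} \, t \, dt.
\]
The positive part naturally introduces the threshold at $t = |z|$, so I would split into two cases. If $|z| \ge r$, then $t \le r \le |z|$ throughout $[0,r]$, the integrand vanishes identically, and the average is $0$; this matches $\Sigma(\rho) = 0$ for $\rho = |z|/r \ge 1$. If $|z| < r$, the integrand is supported on $(|z|,r]$, where $\log(t/|z|)^{+} = \log(t/|z|)$.

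In the remaining case I would evaluate $\int_{|z|}^r \log(t/|z|)\, t \, dt$ by a single integration by parts (taking $v = t^2/2$), obtaining $\frac{r^2}{2}\log(r/|z|) - \frac{r^2}{4} + \frac{|z|^2}{4}$. Multiplying by $2/r^2$ and setting $\rho = |z|/r < 1$ converts $\log(r/|z|)$ into $-\log\rho = \tfrac12|\log(\rho^2)|$ and $|z|^2/(2r^2)$ into $\rho^2/2$, so the expression equals $\tfrac12\bigl(|\log(\rho^2)| - 1 + \rho^2\bigr) = 2\pi\,\Sigma(\rho)$, as claimed. Since each step is explicit, there is no genuine obstacle here; the only points demanding care are the correct area normalisation $\tfrac{1}{\pi r^2}$ in the polar substitution and the bookkeeping of the positive part, which is exactly what produces the piecewise form of $\Sigma$ and the cutoff at $\rho = 1$.
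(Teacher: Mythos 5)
Your proposal is correct and follows essentially the same route as the paper's proof: polar coordinates, reduction of the angular average via Lemma~\ref{lem:log} to the positive part $\log(t/|z|)^{+}$, the case split at $|z|=r$, and an elementary evaluation of the radial integral (the paper substitutes $s=\rho/|z|$ and uses the antiderivative $\tfrac14 s^2(\log(s^2)-1)$, while you integrate by parts directly, which is an immaterial difference). All constants check out.
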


\begin{proof}
We have
\begin{align*}
I &:= 
\fint_{B_r(0)}   
 \left[\log(|z+h|) - \log(|z|)\right]\, dh
\\
&= 
 \frac{1}{\pi r^2} \int_{0}^r \rho \int_{S^1}  \log\left(\frac{|z+\rho \sigma|}{|z|}\right) \, d\sigma \, d\rho 
 \\
 &=
  \frac{2}{r^2} \int_{0}^r \rho \fint_{S^1} \log\left(\left|\frac{z}{|z|}+\frac{\rho}{|z|} \sigma\right|\right) \, d\sigma \, d\rho 
\end{align*}
By Lemma \ref{lem:log}, 
\[
\fint_{S^1} \log\left(\left|\frac{z}{|z|}+\frac{\rho}{|z|} \sigma\right|\right) \, d\sigma
= \log\left(\frac{\rho}{|z|}\right)^+.
\]
Hence, we find that
\[
I = \frac{2}{r^2} \int_{0}^r  \rho \log\left(\frac{\rho}{|z|}\right)1_{[\rho \ge |z|]}  \, d\rho.
\]
For $|z|\ge r$, we clearly have $I=0 = \Sigma(|z|/r)$. If $|z|< r$, then 
\begin{align*}
I &= \frac{2}{r^2} \int_{|z|}^r  \rho \log\left(\frac{\rho}{|z|}\right)  \, d\rho
\\
&= \frac{2|z|^2}{r^2} \int_{1}^{r/|z|} s \log(s) \, ds 
\\
&= \frac{2|z|^2}{r^2} \left[\frac{1}{4}s^2(\log(s^2) - 1)\right]_{1}^{r/|z|}
\\
&= \frac{1}{2} \left(\log((r/|z|)^2) - 1 + (|z|/r)^2\right)
\\
&= 2\pi\Sigma(|z|/r),
\end{align*}
where
\[
\Sigma(\rho) 
= 
\frac{1}{4\pi} \left(|\log(\rho^2)| - 1 + \rho^2\right).
\]
\end{proof}

\begin{lemma} \label{lem:psipsiav}
Let $\psi = G\ast \omega$, $\omega \in L^1_x\cap L^\infty_x$, $\supp(\omega)$ compact. Then 
\[
\psi(x) - [\psi]_r(x)
=
-\int_{B_r(0)} \Sigma\left(\frac{|h|}{r}\right)\omega(x+h)\, dh,
\]
where $[\psi]_r(x) := \fint_{B_r(0)} \psi(x+h) \, dh$.
\end{lemma}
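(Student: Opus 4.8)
The plan is to substitute the explicit kernel representation $\psi = G \ast \omega$ directly and reduce the averaged difference $\psi(x) - [\psi]_r(x)$ to the quantity already evaluated in Lemma \ref{lem:logrewrite}.

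First I would use the convolution together with the substitution $z = y - x$ and the evenness $G(-z) = G(z)$ of the logarithm to rewrite both $\psi(x)$ and its translates as integrals against the shifted vorticity $\omega(x+z)$:
\[
\psi(x) = \int_{\dom} G(z)\,\omega(x+z)\, dz, \qquad \psi(x+h) = \int_{\dom} G(h-z)\,\omega(x+z)\, dz.
\]
Averaging the second identity over $h \in B_r(0)$ and interchanging the two integrations then gives
\[
[\psi]_r(x) = \int_{\dom} \left( \fint_{B_r(0)} G(h-z)\, dh \right) \omega(x+z)\, dz,
\]
so that the difference $\psi(x) - [\psi]_r(x)$ becomes a single integral of $\omega(x+z)$ against the averaged kernel $G(z) - \fint_{B_r(0)} G(h-z)\, dh$.

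The second step is to evaluate this averaged kernel. Since $B_r(0)$ is invariant under $h \mapsto -h$ and $G$ is even, I can write $\fint_{B_r(0)} G(h-z)\, dh = \frac{1}{2\pi}\fint_{B_r(0)} \log(|z+h|)\, dh$, which is precisely the object treated in Lemma \ref{lem:logrewrite}. That lemma gives $\fint_{B_r(0)}[\log|z+h| - \log|z|]\, dh = 2\pi\,\Sigma(|z|/r)$, whence $\fint_{B_r(0)} G(h-z)\, dh = G(z) + \Sigma(|z|/r)$ and the averaged kernel collapses to $-\Sigma(|z|/r)$. Substituting back yields $\psi(x) - [\psi]_r(x) = -\int_{\dom} \Sigma(|z|/r)\,\omega(x+z)\, dz$. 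Finally, because $\Sigma(\rho) = 0$ for $\rho > 1$ by definition, the integrand vanishes whenever $|z| > r$, so the domain of integration reduces to $B_r(0)$; relabelling $z$ as $h$ produces the claimed identity.

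The only step requiring genuine care is the Fubini interchange used to pull the average $\fint_{B_r(0)}$ inside the convolution integral; everything else is a direct substitution followed by an appeal to Lemma \ref{lem:logrewrite}. This interchange is justified by the standing hypotheses $\omega \in L^1_x \cap L^\infty_x$ with $\supp(\omega)$ compact: the shifted vorticity $\omega(x + \cdot)$ is supported in a fixed bounded set, and the logarithmic kernel is locally integrable, so the double integral of $|G(h-z)|\,|\omega(x+z)|$ over $B_r(0) \times \dom$ is finite. These same hypotheses guarantee that $\psi = G \ast \omega$ is finite pointwise, so all the manipulations above are well defined.
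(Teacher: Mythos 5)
Your proposal is correct and follows essentially the same route as the paper: both substitute the kernel representation $\psi = G\ast\omega$, pull the average over $h\in B_r(0)$ inside the $z$-integral to reduce the kernel difference to $\fint_{B_r(0)}[\log|z+h|-\log|z|]\,dh$, and invoke Lemma \ref{lem:logrewrite} to identify this with $2\pi\,\Sigma(|z|/r)$. The only difference is that you spell out the Fubini justification and the $h\mapsto -h$ symmetry explicitly, which the paper leaves implicit.
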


\begin{proof}
We have
\[
 \psi(x) - [\psi]_r(x)
= 
\frac{-1}{2\pi}
\int_\dom 
\left(
\fint_{B_r(0)} \left[\log |z+h| - \log |z|\right] \, dh 
\right)
\, \omega(x+z) \, dz.
\]
By Lemma \ref{lem:logrewrite}, the average on the right-hand side can be simplified to yield
\[
\psi(x) - [\psi]_r(x) 
=
-\int_\dom \Sigma\left(\frac{|z|}{r}\right) \, \omega(x+z) \, dz.
\]
\end{proof}

\begin{lemma} \label{lem:psimax}
For any $\omega \in L^1_x \cap L^\infty_x$, $\supp(\omega)$ compact, and $\psi = G \ast \omega$, we have
\[
\Vert \psi - [\psi]_r \Vert_{L^\infty} \le \int_0^r \frac{M_s(\omega) \, ds}{s}, \quad \forall r>0.
\]
We have denoted by $[\psi]_r$ the local average of $\psi$ over a ball of radius $r$, i.e. $[\psi]_r(x) := \fint_{B_r(0)} \psi(x+h) \, dh$.
\end{lemma}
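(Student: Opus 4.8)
The plan is to start directly from the exact representation of $\psi - [\psi]_r$ in terms of the vorticity established in Lemma \ref{lem:psipsiav}. Taking absolute values inside the integral and using the elementary bound $\Sigma(\rho) \le |\log(\rho)|$ for $0 < \rho \le 1$ (already observed in the proof of Lemma \ref{lem:structest}), one obtains the pointwise estimate
\[
|\psi(x) - [\psi]_r(x)| \le \int_{B_r(0)} \left|\log\left(\frac{|h|}{r}\right)\right| |\omega(x+h)| \, dh,
\]
valid for every $x \in \dom$. Since the right-hand side no longer involves the sign of $\omega$, it suffices to bound this quantity uniformly in $x$ by $\int_0^r s^{-1} M_s(\omega)\, ds$; taking the supremum over $x$ then yields the claimed $L^\infty$ bound.

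The key step is to pass from the integral over $h \in B_r(0)$ to a one-dimensional radial integral. For fixed $x$, I would introduce the nondecreasing distribution function $F_x(s) := \int_{B_s(x)} |\omega(y)| \, dy = \int_{|h| \le s} |\omega(x+h)| \, dh$, so that the right-hand side above becomes the Lebesgue--Stieltjes integral $\int_0^r |\log(s/r)| \, dF_x(s)$. Observing that $s \mapsto |\log(s/r)| = \log(r) - \log(s)$ is continuously differentiable on $(0,r]$ with derivative $-1/s$, an integration by parts gives
\[
\int_0^r \left|\log\left(\frac{s}{r}\right)\right| \, dF_x(s) = \left[\,\left|\log\left(\frac{s}{r}\right)\right| F_x(s)\,\right]_0^r + \int_0^r \frac{F_x(s)}{s} \, ds.
\]
Because $F_x(s) \le M_s(\omega)$ by the very definition of the vorticity maximal function (Definition \ref{def:vortmax}), the remaining integral is dominated by $\int_0^r s^{-1} M_s(\omega) \, ds$, which is exactly the target.

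The hard part will be the vanishing of the two boundary terms. At $s = r$ this is immediate, since $|\log(1)| = 0$. At $s = 0$ the logarithm diverges, and here I would exploit the standing hypothesis $\omega \in L^\infty_x$: the crude bound $F_x(s) \le \pi \Vert \omega \Vert_{L^\infty} s^2$ forces $|\log(s/r)| F_x(s) \to 0$ as $s \to 0$, so the boundary contribution vanishes. This is precisely the cancellation already used (for the closely related quantity $m(s) = M_s(\omega)$) in the proof of Theorem \ref{thm:structmaxest}. A secondary point worth a line of justification is that $F_x$ is merely monotone rather than smooth, so the integration by parts must be read in the Lebesgue--Stieltjes sense; this is unproblematic, since $|\log(s/r)|$ is absolutely continuous on every interval $[\delta, r]$ with $\delta > 0$, and the $s \to 0$ endpoint is controlled by the $L^\infty$ estimate above.
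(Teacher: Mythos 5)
Your proposal is correct and follows essentially the same route as the paper: both start from the exact representation in Lemma \ref{lem:psipsiav}, bound $\Sigma(\rho)$ by $|\log(\rho)|$, rewrite the resulting radial integral as a Lebesgue--Stieltjes integral against the distribution function $F_x(s)=\int_{B_s(x)}|\omega|\,dy$ (the paper's $m(x;s)$), integrate by parts, and dominate $F_x(s)$ by $M_s(\omega)$. Your treatment of the boundary term at $s=0$ via the crude bound $F_x(s)\le \pi\Vert\omega\Vert_{L^\infty}s^2$ is exactly the cancellation the paper invokes (in a footnote to Theorem \ref{thm:structmaxest}), so nothing is missing.
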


\begin{proof}
Let $x\in \mathbb{R}^2$ be arbitrary. By Lemma \ref{lem:psipsiav}, and the fact that $\Sigma(\rho) \le |\log(\rho)|$, we have
\begin{align*}
|\psi(x)-[\psi]_r(x)|
\le 
\int_{B_r(0)} \left|\log\left(\frac{|h|}{r}\right)\right||\omega(x+h)|\, dh.
\end{align*}
Writing the last integral in polar coordinates, we obtain
\begin{align*} 
|\psi(x)-[\psi]_r(x)|
&\le
\int_0^r \left|\log\left(\frac{s}{r}\right)\right| \left(s \int_{S^1} |\omega(x+s\sigma)|\, d\sigma\right)\, ds
\end{align*}
Next, we note that 
\[
s \int_{S^1} |\omega(x+s\sigma)|\, d\sigma = \frac{d}{ds} \int_{B_s(x)} |\omega(y)| \, dy.
\]
Let $m(x;s) := \int_{B_s(x)} |\omega(y)| \, dy$, so that 
\begin{align}
|\psi(x)-[\psi]_r(x)|
\le
\int_0^r \left|\log\left(\frac{s}{r}\right)\right| \frac{dm(x;s)}{ds}\, ds.
\end{align}
Integration by parts yields
\begin{align*}
|\psi(x)-[\psi]_r(x)|
&\le
\int_0^r \frac{m(x;s)}{s} \, ds
\le 
\int_0^r \frac{M_s(\omega)}{s} \, ds.
\end{align*}
The claimed estimate follows.
\end{proof}


\section{Strong convergence of vortex-blob method} \label{app:vortexblob}

\if{
\begin{lemma} \label{lem:vortdecaysmooth}
Let $0 < \epsilon \le 1$. If there exist $\overline{M} > 0$ and $0 < \alpha \le 2$, such that 
\[
M_r(\omega_\epsilon) \le \overline{M} r^\alpha, \quad \text{for all $r \ge \epsilon$},
\]
uniformly as $\epsilon \to 0$, then 
\[
M_r(\omega^\epsilon) \le 
\begin{cases}
C r^{\lambda}, \quad (r< \epsilon) \\
C r^{\alpha},  \quad (r\ge \epsilon)
\end{cases}
\]
where $C = C(\Vert \omega_0 \Vert_{\mathcal{M}},\overline{M},\Vert \phi \Vert_{L^\infty})$ and 
\[
\lambda := \frac{2\alpha}{2+\alpha}.
\]
\end{lemma}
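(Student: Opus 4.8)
The plan is to treat the two regimes $r \ge \epsilon$ and $r < \epsilon$ separately: the first is a direct consequence of the mollification estimate of Lemma \ref{lem:vortmoll}, while the second requires interpolating between two competing elementary bounds, and it is precisely this interpolation that produces the degraded exponent $\lambda$. Throughout I would use that the smoothed vorticity is the mollification of the discrete vorticity, $\omega^\epsilon = \phi_\epsilon \ast \omega_\epsilon$, and that $\Vert \omega_\epsilon \Vert_{\mathcal{M}} = \sum_j |\xi_j| \le \Vert \omega_0 \Vert_{\mathcal{M}}$.

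For $r \ge \epsilon$, Lemma \ref{lem:vortmoll} applied with the non-negative, mass-one mollifier $\phi_\epsilon$ gives $M_r(\omega^\epsilon) \le M_r(\omega_\epsilon)$, and the hypothesis then yields $M_r(\omega^\epsilon) \le \overline{M} r^\alpha$ immediately. It is worth stressing that Lemma \ref{lem:vortmoll} needs only $\phi_\epsilon \ge 0$ and $\int \phi_\epsilon = 1$, not compact support, which is essential here since the blob may be the slowly decaying Krasny kernel. This already disposes of the regime $r \ge \epsilon$.

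For $r < \epsilon$ I would record two bounds holding for the given $\epsilon$. First, by monotonicity of $s \mapsto M_s(\,\cdot\,)$ together with the previous step,
\[
M_r(\omega^\epsilon) \le M_\epsilon(\omega^\epsilon) \le M_\epsilon(\omega_\epsilon) \le \overline{M} \epsilon^\alpha .
\]
Second, the crude pointwise estimate $|\omega^\epsilon(y)| \le \Vert \phi_\epsilon \Vert_{L^\infty} \sum_j |\xi_j| = \epsilon^{-2}\Vert \phi \Vert_{L^\infty} \Vert \omega_\epsilon \Vert_{\mathcal{M}}$, integrated over a ball of radius $r$, gives
\[
M_r(\omega^\epsilon) \le \pi r^2 \Vert \omega^\epsilon \Vert_{L^\infty} \le \pi \Vert \phi \Vert_{L^\infty} \Vert \omega_0 \Vert_{\mathcal{M}} \, r^2 \epsilon^{-2} =: C_B \, r^2 \epsilon^{-2}.
\]
The two bounds behave oppositely in $\epsilon$ (the first increasing, the second decreasing), so $M_r(\omega^\epsilon) \le \min\{\overline{M}\epsilon^\alpha,\, C_B r^2\epsilon^{-2}\}$, and it suffices to bound this minimum uniformly over $\epsilon > 0$. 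The maximum of the minimum is attained at the crossover $\overline{M}\epsilon^\alpha = C_B r^2 \epsilon^{-2}$, i.e. at $\epsilon_\ast = (C_B/\overline{M})^{1/(2+\alpha)} r^{2/(2+\alpha)}$; substituting $\epsilon_\ast$ into either expression gives $M_r(\omega^\epsilon) \le \overline{M}^{2/(2+\alpha)} C_B^{\alpha/(2+\alpha)} r^{2\alpha/(2+\alpha)} = C r^\lambda$, with $\lambda = 2\alpha/(2+\alpha)$ and $C = C(\Vert \omega_0 \Vert_{\mathcal{M}}, \overline{M}, \Vert \phi \Vert_{L^\infty})$ as claimed. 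Taking $C$ to be the maximum of this constant and $\overline{M}$ covers both regimes simultaneously.

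The one genuinely conceptual point, and the step I expect to be the main obstacle, is the interpolation and its interpretation: because $\phi$ need not be compactly supported, the sharp localized $L^\infty$ bound (which would restrict the sum to blobs within distance $\sim\epsilon$ and recover the exponent $\alpha$) is unavailable, so one must pay with the total mass $\Vert \omega_0 \Vert_{\mathcal{M}}$ in the bound $C_B r^2\epsilon^{-2}$; the balance between this crude bound and the mollification bound $\overline{M}\epsilon^\alpha$ is exactly what degrades the exponent to $\lambda < \alpha$. A minor verification is that the crossover lies in the admissible range $\epsilon \ge r$, which holds since $2/(2+\alpha) < 1$ and $r < \epsilon \le 1$; in any case the elementary inequality $\min\{\overline{M}\epsilon^\alpha,\, C_B r^2\epsilon^{-2}\} \le C r^\lambda$ is valid for every $\epsilon > 0$, so no case distinction on the location of $\epsilon_\ast$ is actually required.
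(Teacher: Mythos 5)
Your proof is correct and follows essentially the same strategy as the paper's: both arguments rest on interpolating between the crude bound $M_r(\omega^\epsilon)\le \pi\Vert\phi\Vert_{L^\infty}\Vert\omega_0\Vert_{\mathcal{M}}\,(r/\epsilon)^2$ and a bound of order $\epsilon^\alpha$ inherited from the hypothesis at scales $\ge\epsilon$, and balancing the two is exactly what produces $\lambda=2\alpha/(2+\alpha)$. Where you diverge, you improve on the paper: you derive the second bound from Lemma~\ref{lem:vortmoll} (needing only $\phi\ge 0$ with unit mass, whereas the paper's direct estimate restricts the sum to blobs with $|X_j-x|\le r+\epsilon$ and therefore implicitly assumes $\supp(\phi)\subset B_1(0)$, which fails for the Krasny blob), and you optimize $\min\{\overline{M}\epsilon^\alpha,\,C_B r^2\epsilon^{-2}\}$ directly at the crossover instead of going through the paper's case split $r\lessgtr\epsilon^{1+\delta}$ with an auxiliary parameter. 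Your remark that no case distinction on the location of the crossover is needed is also right, since $\min\{A,B\}\le A^{2/(2+\alpha)}B^{\alpha/(2+\alpha)}$ makes the $\epsilon$-dependence cancel identically and yields the bound $C r^\lambda$ for every $\epsilon>0$.
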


\begin{proof}
To prove the conclusion of this lemma, we note that 
\begin{align*}
M_r(\omega^\epsilon) 
&= \sup_x \sup_{t\in [0,T]} \int_{B_r(x)} |\omega^\epsilon(z,t)| \, dz
\end{align*}
and
\begin{align*}
\int_{B_r(x)} |\omega^\epsilon(z,t)| \, dz
&\le \int_{B_r(x)} \sum_{j} |\xi_j| \phi_\epsilon(x_j-z) \, dz
\\
&\le 
\frac{\Vert \phi\Vert_{L^\infty}}{\epsilon^2}\sum_j |\xi_j| |B_r(x) \cap B_\epsilon(x_j)|.
\end{align*}
We obtain two bounds from the latter expression: The first bound is useful when $r\lesssim \epsilon$, and is obtained from the trivial estimate
\[
|B_r(x) \cap B_\epsilon(x_j)| \le |B_r(x)| = \pi r^2,
\]
which yields
\begin{align}\label{eq:VM1}
M_r(\omega^\epsilon) 
\le \pi\Vert \phi\Vert_{L^\infty} \Big(\sum_{j} |\xi_j| \Big) \left(\frac{r}{\epsilon}\right)^2 
\le \pi\Vert \phi\Vert_{L^\infty} \Vert \omega_0 \Vert_{\mathcal{M}} \left(\frac{r}{\epsilon}\right)^2.
\end{align}
For the second upper bound, we utilize instead
\[
\sum_{j} |\xi_j| |B_r(x) \cap B_\epsilon(x_j)| 
\le 
\sum_{|x_j - x|\le r+\epsilon} |\xi_j| |B_{\epsilon}(x)| = \pi \epsilon^2 \int_{B_{r+\epsilon}(x)} |\omega_\epsilon(z,t)| \, dz,
\]
and the assumption $\int_{B_{r+\epsilon}(x)} |\omega_\epsilon(z,t)| \, dz\le M_{r+\epsilon}(\omega_\epsilon) \le \overline{M} (r+\epsilon)^\alpha$, to obtain
\begin{align}\label{eq:VM2}
M_r(\omega^\epsilon) \le \pi \Vert \phi\Vert_{L^\infty} \overline{M} (r+\epsilon)^\alpha.
\end{align}
Combining \eqref{eq:VM1} and \eqref{eq:VM2}, we thus have
\begin{align} \label{eq:VM3}
M_r(\omega^\epsilon) \le 
\begin{cases}
C\left(\dfrac{r}{\epsilon}\right)^2, & {(I)}\\
C(r+\epsilon)^\alpha, & {(II)}
\end{cases}
\end{align}
for all $r\ge 0$, where $C = C(\Vert \omega_0 \Vert_{\mathcal{M}},\overline{M},\Vert \phi\Vert_{L^\infty})$ is independent of $\epsilon > 0$. 

Fix $\epsilon > 0$ for the moment. For $r\le \epsilon^{1+\epsilon}$, we clearly have from \eqref{eq:VM3}(I):
\[
M_r(\omega^\epsilon) 
\le C \left(\frac{r}{\epsilon}\right)^2
= C \Big(\underbrace{\frac{r}{\epsilon^{1+\epsilon}}}_{\le 1} \, r^{\epsilon}\Big)^{2/(1+\epsilon)}
\le C r^{2\epsilon/(1+2\epsilon)}.
\]
On the other hand, for $1\ge r \ge \epsilon^{1+\epsilon}$, we have $\max(\epsilon,r) \le r^{1/(1+\epsilon)}$, and hence by \eqref{eq:VM3}(II):
\[
M_r(\omega^\epsilon)
\le C(r+\epsilon)^\alpha \le C(2r^{1/(1+\epsilon)})^\alpha \le 2^\alpha C r^{\alpha/(1+\epsilon)} \le 4 C r^{\alpha/(1+\epsilon)}.
\]
Setting the free parameter $\epsilon := \alpha/2$, we find 
\[
M_r(\omega^\epsilon) \le 4C r^{2\alpha/(2+\alpha)},
\]
for all $0 \le r \le 1$, where we can in fact explicitly identify $C = \pi \Vert \phi \Vert_{L^\infty} \max(\overline{M},\Vert \omega_0 \Vert_{\mathcal{M}})$.

Finally, we note that for $r\ge \epsilon$ (including the case $r\ge 1$), this estimate can be further improved, since in this case we obtain
\[
M_r(\omega^\epsilon) \le C (r+\epsilon)^\alpha \le 2^\alpha C r^\alpha \le 4C r^\alpha,
\]
from \eqref{eq:VM3}(II). Thus, $M_r(\omega^\epsilon)$ satisfies the claimed estimate with a constant depending only on $\Vert \omega_0 \Vert_{M}$, $\overline{M}$ and $\Vert \phi \Vert_{L^\infty}$.
\end{proof}
}
\fi

\begin{proof}[Proof of \ref{thm:strongconv}]
We recall that, by assumption, the vorticity remains compactly supported for all $t\in [0,T]$. It is easy to see that $\int \omega^\epsilon(t) \, dx = 0$ for all $t\in [0,T]$. As a consequence, no distinction needs to be made between convergence of $u^\epsilon$ in $L^2_{x,\loc}$ and $L^2_x$. The weak convergence $u^\epsilon \weaklyto u$ in $L^2_tL^2_{x}$ to a weak solution of the incompressible Euler equations follows directly from the work \cite{LiuXin1995}. To see that the convergence $u^\epsilon \to u$ is in fact strong, we recall that by Lemma \ref{lem:vortmoll} and the assumptions on the uniform algebraic decay of $M_r(\omega_\epsilon)$ in this theorem, we have
\[
M_r(\omega^\epsilon) \le M_r(\omega_\epsilon) \le C |\log(r)^\beta, \quad \forall r > 0,
\]
for $C>0$, $\beta >1$.
By Corollary \ref{cor:algebraicdecay}, this implies that
\begin{align*}
\sup_{t\in [0,T]}
\fint_{B_r(0)} \int_{\R^2} |u^\epsilon(x+h)-u^\epsilon(x)|^2 \, dx \, dh
&\le 
\frac{C}{\beta-1} \Vert \omega^\epsilon \Vert_{\mathcal{M}} |\log(r)|^{\beta-1}
\\
&\le
\frac{C}{\beta-1} \Vert \omega_0 \Vert_{\mathcal{M}} |\log(r)|^{\beta-1}.
\end{align*}
The strong relative compactness of $u^\epsilon$ as $\epsilon \to 0$ now follows from Proposition \ref{prop:compactness}. Since $u^\epsilon$ is relatively compact in $L^2_t L^2_{x}$ and converges weakly $u^\epsilon \weaklyto u$, we conclude that in fact $u^\epsilon \to u$ \emph{strongly} in $L^2_t L^2_{x}$. 

To see that $u$ is energy conservative, we note that 
\begin{align*} 
\int_0^T \left| \Vert u^\epsilon(t) \Vert_{L^2_x}^2 - \Vert u(t) \Vert_{L^2_x}^2 \right| \, dt
&=
\int_0^T \left| \left\langle u^\epsilon(t) - u(t), u^\epsilon(t) + u(t) \right\rangle_{L^2_x} \right| \, dt
\\
&\le 
\int_0^T \left\Vert u^\epsilon(t) - u(t) \right\Vert_{L^2_x}  \left\Vert u^\epsilon(t) + u(t) \right\Vert_{L^2_x} \, dt
\\
&\le 
\left(\int_0^T \left\Vert u^\epsilon(t) - u(t) \right\Vert_{L^2_x}^2\, dt\right)^{1/2}
\left(\int_0^T \left\Vert u^\epsilon(t) + u(t) \right\Vert_{L^2_x}^2\, dt\right)^{1/2}
\\
&\le \Vert u^\epsilon - u \Vert_{L^2_{t,x}} \left(\Vert u^\epsilon \Vert_{L^2_{t,x}} + \Vert u \Vert_{L^2_{t,x}}\right).
\end{align*}
From the strong convergence $u^\epsilon \to u$ in $L^2_{t,x}$, it follows that
\[
\Vert u^\epsilon - u \Vert_{L^2_{t,x}} \to 0, \quad (\epsilon \to 0),
\]
and that there exists a constant $C>0$, such that
\[
 \Vert u^\epsilon \Vert_{L^2_{t,x}} + \Vert u \Vert_{L^2_{t,x}} 
 < C, \quad \forall \, \epsilon > 0.
 \]
Thus, $t \mapsto \Vert u^\epsilon(t) \Vert_{L^2_x}^2$ converges to $t\mapsto \Vert u(t) \Vert_{L^2_x}^2$ in $L^1([0,T])$. From the strong convergence in $L^1([0,T])$, it follows that we can extract a subsequence $\epsilon_k \to 0$, such that
\[
\Vert u^{\epsilon_k}(t) \Vert_{L^2_x}^2 \to \Vert u(t) \Vert_{L^2_x}^2, 
\quad \text{for almost every $t\in [0,T]$.}
\]
On the other hand, by Proposition \ref{prop:energyerror}, the algebraic bound on the vorticity maximal function implies that for any $t\in [0,T]$, we have $\Vert u^\epsilon(t)\Vert_{L^2_x}^2 \to \Vert u_0 \Vert_{L^2_x}^2$ as $\epsilon \to 0$. Hence
\[
\Vert u_0 \Vert_{L^2_x} 
=
\lim_{k\to \infty} \Vert u^{\epsilon_k}(t) \Vert_{L^2_x}
=
\Vert u(t) \Vert_{L^2_x},
\]
for almost all $t\in [0,T]$.
\end{proof}

\begin{lemma}\label{lem:Ealong}
Let 
\[
E^\epsilon(t) := \int |u^\epsilon(t)|^2 \, dx = \int \psi^\epsilon(t) \omega^\epsilon(t) \, dx,
\quad
\mathcal{E}^\epsilon(t) := \int \psi^\epsilon(t) \omega_\epsilon(t) \, dx,
\]
so that $\mathcal{E}^\epsilon(t)$ is the invariant corresponding to the energy of the discretized vortex system, i.e.
\[
\mathcal{E}^\epsilon(t) \equiv \sum_{i,j} G_\epsilon(|X_i(t) - X_j(t)|) \xi_i \xi_j,
\]
where $G_\epsilon(|x|) := -\frac{1}{4\pi}\log(|x|^2 + \epsilon^2)$.
If there exist $C_M,\alpha>0$, such that $M_r(\omega^\epsilon) \le C_M r^\alpha$,  then 
\[
\left|
E^\epsilon(t) - \mathcal{E}^\epsilon(t)
\right|
\le 
C \epsilon^\alpha,
\]
where $C = C(\Vert \omega_0 \Vert_{\mathcal{M}}, C_M, \alpha, \phi)$.
\end{lemma}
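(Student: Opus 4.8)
The plan is to reduce the difference of energies to an $L^\infty$-bound on the mollification defect of the stream function, and then to estimate that defect through the vorticity maximal function, exactly in the spirit of Lemma~\ref{lem:psimax}. First I would record the algebraic structure. Writing the stream function as $\psi^\epsilon = G_\epsilon \ast \omega_\epsilon = -G\ast \omega^\epsilon$ (the two expressions agree because $\omega^\epsilon = \phi_\epsilon \ast \omega_\epsilon$ and $G_\epsilon = -\phi_\epsilon\ast G$), and using that $\phi_\epsilon$ is an even radial probability density, self-adjointness of convolution gives
\[
E^\epsilon - \mathcal{E}^\epsilon = \int \psi^\epsilon(\omega^\epsilon - \omega_\epsilon)\, dx = \int \big(\phi_\epsilon \ast \psi^\epsilon - \psi^\epsilon\big)\, d\omega_\epsilon .
\]
Consequently
\[
|E^\epsilon - \mathcal{E}^\epsilon| \le \Vert \phi_\epsilon \ast \psi^\epsilon - \psi^\epsilon \Vert_{L^\infty}\, \Vert \omega_\epsilon \Vert_{\mathcal{M}} \le \Vert \phi_\epsilon \ast \psi^\epsilon - \psi^\epsilon \Vert_{L^\infty}\, \Vert \omega_0 \Vert_{\mathcal{M}},
\]
using $\Vert \omega_\epsilon \Vert_{\mathcal{M}} = \sum_j |\xi_j| \le \Vert \omega_0 \Vert_{\mathcal{M}}$ (and that the $\xi_j$ are constant in time) from Definition~\ref{def:goodapprox}. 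It then remains to prove $\Vert \phi_\epsilon \ast \psi^\epsilon - \psi^\epsilon \Vert_{L^\infty} \le C\epsilon^\alpha$.

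Second, I would convert the mollification defect into a convolution against an explicit positive kernel. Inserting $\psi^\epsilon = -G\ast\omega^\epsilon$ and interchanging the order of integration (legitimate since $\omega^\epsilon$ is smooth with compact support and $G,\phi_\epsilon\ast G \in L^1_{\loc}$) yields
\[
\phi_\epsilon \ast \psi^\epsilon - \psi^\epsilon = -\Phi_\epsilon \ast \omega^\epsilon, \qquad \Phi_\epsilon := \phi_\epsilon \ast G - G,
\]
and for Krasny's blob one computes the explicit, scale-invariant kernel $\Phi_\epsilon(z) = \tfrac{1}{4\pi}\log\!\big(1 + \epsilon^2/|z|^2\big) = \Phi_1(z/\epsilon)$. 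Here I would note that $\Phi_\epsilon \ge 0$ quite generally: since $G = \tfrac{1}{2\pi}\log|\,\cdot\,|$ is subharmonic and $\phi_\epsilon$ is a radial probability density, the sub-mean-value inequality gives $\phi_\epsilon \ast G \ge G$. Hence, pointwise,
\[
|\phi_\epsilon \ast \psi^\epsilon(x) - \psi^\epsilon(x)| \le \int_{\dom} \Phi_\epsilon(z)\, |\omega^\epsilon(x-z)|\, dz .
\]

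The third step is the heart of the argument and parallels Lemma~\ref{lem:psimax}. Setting $m(x;s) := \int_{B_s(x)} |\omega^\epsilon|\, dy$, which obeys both $m(x;s) \le M_s(\omega^\epsilon) \le C_M s^\alpha$ and $m(x;s) \le \Vert \omega_0 \Vert_{\mathcal{M}}$, I would pass to polar coordinates and integrate by parts in the radial variable:
\[
\int_{\dom} \Phi_\epsilon(z) |\omega^\epsilon(x-z)|\, dz = \int_0^\infty \tfrac{1}{4\pi}\log\!\big(1 + \tfrac{\epsilon^2}{s^2}\big)\, dm(x;s) = \int_0^\infty \frac{\epsilon^2}{2\pi\, s(s^2 + \epsilon^2)}\, m(x;s)\, ds,
\]
where the boundary terms vanish because $\log(1+\epsilon^2/s^2)\, m(x;s) \to 0$ both as $s\to 0$ (the bound $m \le C_M s^\alpha$ beats the logarithm) and as $s\to\infty$ (the kernel decays like $\epsilon^2/s^2$ while $m$ stays bounded). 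Inserting $m(x;s) \le C_M s^\alpha$ and rescaling $s = \epsilon u$ gives
\[
\int_0^\infty \frac{\epsilon^2}{2\pi\, s(s^2+\epsilon^2)}\, C_M s^\alpha\, ds = \frac{C_M}{2\pi}\, \epsilon^\alpha \int_0^\infty \frac{u^{\alpha-1}}{u^2+1}\, du = C(C_M,\alpha)\, \epsilon^\alpha,
\]
the $u$-integral being finite for $0 < \alpha < 2$. Combining the three displays yields $|E^\epsilon - \mathcal{E}^\epsilon| \le C\epsilon^\alpha$ with $C = C(\Vert \omega_0\Vert_{\mathcal{M}}, C_M, \alpha, \phi)$, as claimed.

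The step requiring the most care — and the main obstacle — is this last estimate: one must verify that the integration-by-parts boundary terms genuinely vanish and that the rescaled integral converges. Convergence at $s = \infty$ forces the range $0 < \alpha < 2$, but this is not a true restriction, since $\omega^\epsilon \in L^\infty$ always gives $m(x;s) \lesssim s^2$, so one may assume $\alpha \le 2$ from the outset; the borderline case $\alpha = 2$ is handled by splitting the $s$-integral at a fixed scale and using $m \le \Vert\omega_0\Vert_{\mathcal{M}}$ on $\{s \gtrsim 1\}$, at the cost of a harmless logarithmic factor absorbed into $C$. The remaining ingredients — the evenness and radiality of $\phi_\epsilon$ used in the self-adjointness step, and the identity $\psi^\epsilon = -G\ast\omega^\epsilon$ with its explicit regularized Green's function $G_\epsilon$ — are immediate from the definitions in Section~\ref{sec:vortexblob}.
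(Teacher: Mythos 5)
Your proposal is correct and follows essentially the same route as the paper: the paper likewise reduces $|E^\epsilon-\mathcal{E}^\epsilon|$ to $\bigl(\sum_j|\xi_j|\bigr)\,\Vert \phi_\epsilon\ast\psi^\epsilon-\psi^\epsilon\Vert_{L^\infty}$ by moving the mollifier onto the stream function, and then bounds the mollification defect by $C\epsilon^\alpha$ via the vorticity maximal function (citing Lemma \ref{lem:psimax}). Your explicit computation with the kernel $\Phi_\epsilon=\phi_\epsilon\ast G-G$ and the radial integration by parts is just a worked-out version of that last step (and in fact fills in the small adaptation needed to pass from the ball averages of Lemma \ref{lem:psimax} to $\phi_\epsilon$-mollification), so no substantive difference remains.
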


\begin{proof}
Let $\psi^\epsilon := G_\epsilon \ast \omega_\epsilon$.
We have
\begin{align*}
|E^\epsilon(t) - \mathcal{E}^\epsilon(t)|
&= 
\left| 
\int_D \psi^\epsilon(x,t) \left[\omega^\epsilon(x,t) - \omega_\epsilon(x,t)\right] \, dx
\right|
\\
&=
\left| 
\sum_{j} \xi_j(t)
\left[(\psi^\epsilon \ast \phi_\epsilon)(X_j,t) - \psi^\epsilon(X_j,t)\right]
\right|
\\
&\le \left(\sum_j |\xi_j(t)|\right) \left\Vert (\psi^\epsilon \ast \phi_\epsilon)(\slot,t) - \psi^\epsilon(\slot,t) \right\Vert_{L^\infty_x}.
\end{align*}
By Lemma \ref{lem:psimax}, $M_r(\omega^\epsilon) \le C_Mr^\alpha$ implies
\[
\left\Vert (\psi^\epsilon \ast \phi_\epsilon)(\slot,t) - \psi^\epsilon(\slot,t) \right\Vert_{L^\infty}
\le 
\tilde{C}\epsilon^\alpha,
\]
where $\tilde{C} = \tilde{C}(C_M,\alpha,\phi)$. We furthermore note that 
\[
\sum_{j} |\xi_j| \le \Vert \omega_0 \Vert_{\mathcal{M}}.
\]
Hence $|E^\epsilon(t) - \mathcal{E}^\epsilon(t)| \le C\epsilon^\alpha$ for $C = \Vert \omega_0 \Vert_{\mathcal{M}} \tilde{C}$ depending only on $C_M$, $\alpha$, $\phi$, $\Vert \omega_0\Vert_{\mathcal{M}}$, as claimed.
\end{proof}

\begin{proposition} \label{prop:energyerror}
Let $\omega_0 \in M\cap H^{-1}$ be initial vorticity data for the incompressible Euler equations. Let $\{\omega_\epsilon\}_{\epsilon > 0}$ be a good vortex-blob approximation. If there exist constant $C,\alpha>0$, such that $M_r(\omega^\epsilon) \le Cr^\alpha$ uniformly for all $\epsilon > 0$, then $E^\epsilon(t) \to E_0$ as $\epsilon \to 0$, for all $t \in [0,T]$.
\end{proposition}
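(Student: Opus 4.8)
The plan is to relate the continuous energy $E^\epsilon(t)$ to the \emph{discrete} vortex energy $\mathcal{E}^\epsilon(t)$, to exploit that the latter is a conserved quantity of the vortex-blob flow \eqref{eq:vortexblob}, and thereby to transport the convergence that is built into the notion of a good vortex-blob approximation \emph{at the initial time} to every later time. Concretely, I would establish the chain
\[
E^\epsilon(t)
\overset{\text{Lem.~\ref{lem:Ealong}}}{\approx}
\mathcal{E}^\epsilon(t)
\overset{\text{conservation}}{=}
\mathcal{E}^\epsilon(0)
=
H^\epsilon(\omega_\epsilon(0))
\overset{\epsilon \to 0}{\longrightarrow}
H(\omega_0) = E_0 ,
\]
in which the only genuinely $\epsilon$-dependent discrepancy (the first step) is shown to tend to zero \emph{uniformly} in $t \in [0,T]$.

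First I would apply Lemma \ref{lem:Ealong}. Its hypothesis $M_r(\omega^\epsilon) \le C r^\alpha$ is precisely the assumption of the present proposition, so it yields $|E^\epsilon(t) - \mathcal{E}^\epsilon(t)| \le C\epsilon^\alpha$ with a constant depending only on $\Vert \omega_0 \Vert_{\mathcal{M}}$, $C$, $\alpha$ and $\phi$, and in particular \emph{independent of $t$}. Hence this error vanishes as $\epsilon \to 0$ uniformly on $[0,T]$, which reduces the claim to understanding $\mathcal{E}^\epsilon(t)$.

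The central (and only dynamical) step is the time-invariance $\mathcal{E}^\epsilon(t) \equiv \mathcal{E}^\epsilon(0)$. This is the assertion that $\mathcal{E}^\epsilon = \sum_{i,j} G_\epsilon(|X_i - X_j|)\xi_i\xi_j = H^\epsilon(\omega_\epsilon)$ is the Hamiltonian of the vortex-blob system (one of the constants of motion already tracked numerically in Section \ref{sec:numerical}). I would verify it directly: differentiating in time, substituting $\dot X_i = \sum_k \xi_k K_\epsilon(X_i - X_k)$ and writing $K_\epsilon = \nabla^\perp G_\epsilon^{\mathrm{vel}}$ with $G_\epsilon^{\mathrm{vel}} = -G_\epsilon$ the regularized stream kernel, the derivative collapses (after using the symmetry of $G_\epsilon$ in its two arguments) to the triple sum $\sum_{i,j,k}\xi_i\xi_j\xi_k\,\nabla G_\epsilon(X_i-X_j)\cdot\nabla^\perp G_\epsilon(X_i-X_k)$. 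This sum is antisymmetric under the interchange $j\leftrightarrow k$, since the scalar $a\cdot b^\perp$ changes sign when $a$ and $b$ are swapped, and therefore vanishes; hence $\mathcal{E}^\epsilon(t)$ is constant.

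It remains to identify the limit. The conserved value equals its value at $t=0$, namely $H^\epsilon(\omega_\epsilon(0))$ evaluated on the discrete measure; by the third bullet of Definition \ref{def:goodapprox} this converges to $H(\omega_0)$ as $\epsilon \to 0$. Since $H(\omega_0)$ is the formal $\epsilon \to 0$ limit of the regularized energy, which coincides with $\Vert u_0 \Vert_{L^2_x}^2 = -\int_\dom \psi_0\,\omega_0\,dx$ for $u_0 = K\ast\omega_0$, this is exactly $E_0$. Combining the three steps gives $E^\epsilon(t) \to E_0$ for every fixed $t\in[0,T]$. I expect the only non-routine ingredient to be the time-invariance of $\mathcal{E}^\epsilon$, since that is where the vortex-blob dynamics actually enters; the remaining points are the (already established) uniform-in-$t$ error bound of Lemma \ref{lem:Ealong} and the initial-data convergence encoded in the definition of a good vortex-blob approximation. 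A small point requiring care is precisely that the constant in Lemma \ref{lem:Ealong} does not depend on $t$, so that the $O(\epsilon^\alpha)$ error does not accumulate over the time interval.
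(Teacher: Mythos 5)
Your proposal is correct and follows essentially the same route as the paper's proof: Lemma \ref{lem:Ealong} to control $E^\epsilon(t)-\mathcal{E}^\epsilon(t)$ uniformly in $t$, conservation of the discrete Hamiltonian $\mathcal{E}^\epsilon$, and the initial-data convergence built into Definition \ref{def:goodapprox}. The only (harmless) differences are that you verify the conservation of $\mathcal{E}^\epsilon$ explicitly via the antisymmetry of the triple sum, where the paper simply invokes the Hamiltonian character of the vortex-blob system, and that you identify the limit directly from $\mathcal{E}^\epsilon(0)=H^\epsilon(\omega_\epsilon(0))\to H(\omega_0)$ rather than passing through $E^\epsilon(0)$.
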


\begin{proof}
By the definition of a good vortex-blob approximation, we have $E^\epsilon(t=0) \to E_0$ as $\epsilon \to 0$ at the initial time. By Lemma \ref{lem:Ealong}, we have $E^\epsilon(t) - \mathcal{E}^\epsilon(t) \to 0$ as $\epsilon \to 0$. Furthermore, for any $\epsilon > 0$ fixed, we have $\mathcal{E}^\epsilon(t) = \mathcal{E}^\epsilon(0)$ due to the Hamiltonian character of the vortex-blob approximation. Thus,
\begin{align*}
E^\epsilon(t)
&= \left[E^\epsilon(t) - \mathcal{E}^\epsilon(t)\right] + \mathcal{E}^\epsilon(t)
\\
&= \left[E^\epsilon(t) - \mathcal{E}^\epsilon(t)\right] + \mathcal{E}^\epsilon(0)
\\
&= \underbrace{\left[E^\epsilon(t) - \mathcal{E}^\epsilon(t)\right]}_{\to 0} + \underbrace{\left[\mathcal{E}^\epsilon(0)-E^\epsilon(0)\right]}_{\to 0} + \underbrace{E^\epsilon(0)}_{\to E_0},
\end{align*}
converges to $E_0$ as $\epsilon \to 0$.
\end{proof}

\bibliographystyle{unsrt}
\bibliography{concentration-vortex-sheet}

\end{document}